\documentclass[11pt]{amsart}
\pdfoutput=1
\usepackage[margin=3.5cm]{geometry}

%\usepackage[margin=3.4cm]{geometry}
%\usepackage{color}

%\usepackage[displaymath, mathlines, pagewise]{lineno}
%\linenumbers

\usepackage[T1]{fontenc}
\usepackage{fourier}
\usepackage{microtype}

\usepackage{url} 
\usepackage{caption}
\usepackage{verbatim} 
\usepackage{graphicx}
\usepackage{pinlabel}
\usepackage[pdftex, curve, graph]{xy}

\newcommand{\bR}{\mathbb R}
\newcommand{\bN}{\mathbb N}
\newcommand{\C}{\mathcal C}
\newcommand{\R}{\mathbb R}
\newcommand{\V}{\mathcal V}
\newcommand{\W}{\mathcal W}
\newcommand{\E}{\mathcal E}
\newcommand{\bZ}{\mathbb Z} 
\newcommand{\Z}{\mathcal Z}
\newcommand{\F}{\mathcal F}

\newcommand{\rk}{{\rm rk} \,}
\newcommand{\Mod}{\rm Mod}

\newcommand{\al}{\alpha}
\newcommand{\be}{\beta}
\newcommand{\de}{\delta}

\newcommand{\si}{\sigma}
\newcommand{\ga}{\gamma}
\newcommand{\De}{\Delta}
\newcommand{\Ga}{\Gamma}
\newcommand{\La}{\Lambda}

\newcommand{\del}{\partial}
\newcommand{\co}{\thinspace\colon}

\newcommand{\Tor}{\rm Tor}
\DeclareMathOperator{\Diff}{Diff}
\newcommand{\lk}{e}

\newtheorem{thm}{Theorem}[section]
\newtheorem{lemma}[thm]{Lemma}
\newtheorem{cor}[thm]{Corollary}
\newtheorem{rem}[thm]{Remark}
\newtheorem{prop}[thm]{Proposition}

\theoremstyle{definition}
\newtheorem{defn}[thm]{Definition}

\newtheorem{rmk}[thm]{Remark}
\newtheorem*{rmks}{Remarks}
\newtheorem*{exas}{Examples}
\newtheorem*{problem}{Problem}

\numberwithin{equation}{section}

\begin{document}

\title[On 3--braid knots of finite concordance order]{On 3--braid knots of finite concordance order}

\author{Paolo Lisca}
\address{Dipartimento di Matematica\\ 
Largo Bruno Pontecorvo, 5\\
Universit\`a di Pisa \\
I-56127 Pisa, ITALY} 
\email{lisca@dm.unipi.it}
\thanks{The author was partially supported by the PRIN--MIUR research project 2010--2011 
``Variet\`a reali e complesse: geometria, 
topologia e analisi armonica''.}

\subjclass[2010]{57M25}
\date{}

\begin{abstract} 
We study 3--braid knots of finite smooth concordance order. A corollary of our main result is that a 
chiral 3--braid knot of finite concordance order is ribbon. 
\end{abstract}

\maketitle

\section{Introduction}\label{s:intro}

In this paper we investigate 3--braid knots of finite concordance order. We work 
in the smooth category, therefore the words `slice' and `concordance' will always mean,  
respectively, `smoothly slice' and `smooth concordance'. The {\em reverse} of an oriented knot will be 
denoted $-K$, and $K^m$ will denote the {\em mirror image} of $K$. In this notation, a connected sum of the 
form $K\#(-K^m)$ is always a {\em slice} knot, i.e.~it bounds a properly embedded disk $D\subset B^4$. 
Recall that a knot is {\em ribbon} if it bounds a properly embedded disk $D\subset B^4$ such that the 
restriction of the radial function $B^4\to [0,1]$ has no local maxima in the interior of $D$. That each 
slice knot is ribbon is the content of the well--known slice--ribbon conjecture. 
A knot $K\subset S^3$ is {\em amphichiral} if $K^m$ is isotopic to 
either $K$ or $-K$. The knot $K$ is {\em chiral} if it is not amphichiral.
 Recall that the classical concordance group $\C$ is the set of equivalence 
classes $[K]$ of oriented knots $K\subset S^3$ with respect to the equivalence relation which declares  
$K_1$ and $K_2$ equivalent if $K_1\#(-K_2^m)$ is slice. The group operation is induced by 
connected sum and $0\in\C$ is the concordance class of the unknot. Several facts are known about 
the structure of $\C$, but its torsion subgroup is not understood, see e.g.~\cite{Li05}. 
We will say that a knot $K$ is of {\em finite concordance order} if $[K]\in\C$ is of finite order.

In~\cite{Ba08} Baldwin obtained some information on 3--braid knots of finite concordance order by 
computing a certain correction term of the Heegaard Floer homology of the two--fold branched cover 
(see Section~\ref{s:prelim} for more details). We will use his result together with constraints obtained via 
Donaldson's `Theorem~A'~\cite{Do87} to establish Theorem~\ref{t:main} below, 
which is our main result. Our approach here is similar to the one used in~\cite{Li07-1,Li07-2}, 
where the concordance orders of 2--bridge knots are determined. 
We point out that the idea of combining information coming 
from Heegaard Floer correction terms with Donaldson's Theorem~A 
was also used in~\cite{Do15, GJ11,Gr14, Le12, Le13}.

Before we can state Theorem~\ref{t:main} we need to introduce some terminology. 
Recall that a {\em symmetric union} knot is a special kind of ribbon knot first introduced by 
Kinoshita and Terasaka~\cite{KT57}. It is unknown whether every ribbon knot is a symmetric union. 
A braid $\be\in B_n$ is called {\em quasi--positive} if it can written as a product of conjugates of 
the standard generators $\si_1,\ldots, \si_{n-1}\in B_n$, and {\em quasi--negative} if $\be^{-1}$ is quasi--positive. 
A knot $K$ is {\em quasi--positive} (respectively {\em quasi--negative}) if $K$ is the closure of a 
quasi--positive (respectively quasi--negative) braid. 

We now recall the notion of `blowup' from~\cite{Li14}. Let $\bN$ be the set of (positive) 
natural numbers, $\bN_0 := \bN\cup\{0\}$, and let $k\in\bN$.
We say that $\hat z\in\bN_0^{k+1}$ is a  {\em blow--up} of $z=(n_1,\ldots,n_k)\in\bN_0^k$ if  
\[
\hat z = 
\begin{cases} 
(1,n_1+1,n_2,\ldots,n_{k-1},n_k+1),\ \text{or} \\
(n_1,\ldots,n_i+1,1,n_{i+1}+1,\ldots,n_k),\ \text{for some}\ 1\leq i < k,\ \text{or} \\
(n_1+1,n_2,\ldots,n_{k-1},n_k+1,1).  
\end{cases} 
\]
There is a well--known isomorphism between the 3--braid group and the mapping 
class group of the one--holed torus. Let $T$ be an oriented, one--holed torus, and let $\Mod(T)$ be the 
group of isotopy classes of orientation--preserving diffeomorphisms of $T$, where isotopies are 
required to fix the boundary pointwise. Let $x, y \in\Mod(T)$ be right--handed Dehn twists along two 
simple closed curves in $T$ intersecting transversely once. The group $\Mod(T)$ is generated by $x$ 
and $y$ subject to the relation $xyx=yxy$, and there is an isomorphism from $\psi\co B_3\to\Mod(T)$ 
sending the standard generators $\si_1, \si_2$ to $x$, respectively $y$. The isomorphism $\psi$ 
can be realized geometrically by viewing $T$ as a two--fold branched cover over the $2$--disk 
with three branch points: elements of $B_3$, viewed as automorphisms of the triply--pointed disk, 
lift uniquely to elements of $\Mod(T)$. It is easy to check that an element $h\in\Mod(T)$ is the image 
under $\psi$ of a quasi--positive 3--braid if and only if $h$ can be written as a product of right--handed 
Dehn twists.

Keeping the isomorphism $\psi$ in mind, it is easy to check that by~\cite[Theorem~2.3]{Li14}, 
if $(s_1,\ldots, s_N)$ is obtained from $(0,0)$ via a sequence of blowups and the string 
\[
(c_1,\ldots, c_N) := (x_1+2,\overbrace{2,\ldots,2}^{y_1-1},\ldots,x_t+2,\overbrace{2,\ldots,2}^{y_t-1})
\]
satisfies $c_i\geq s_i$ for $i=1,\ldots, N$, then the 3--braid $(\si_1\si_2)^3\prod_{i=1}^t \si_1^{x_i}\si_2^{-y_i}$ 
is quasi--positive. 

Observe that any string obtained from $(0,0)$ via a sequence of blowups contains always at least two $1$'s and, 
typically, more than two $1$'s. 
\begin{exas}\label{ex:qp}
(1) According to Knotinfo~\cite{CL15}, the knot $12_{n0721}$ is slice, chiral, and equal 
to the closure of  the 3--braid $\al=\si_1\si_2^2\si_1^4\si_2^{-5}$. Applying~\cite[Proposition~2.1]{Mu74} it is easy to check that 
$\al$ is conjugate to $(\si_1\si_2)^3\si_1^3\si_2^{-7}$, 
whose associated string of integers is obtained by changing the two $1$'s into $2$'s in $(5,1,2,2,2,2,1)$. 
Moreover, $(5,1,2,2,2,2,1)$ is an iterated blowup of $(0,0)$: 
\[
(5,1,2,2,2,2,1)\rightarrow (4,1,2,2,2,1)\rightarrow (3,1,2,2,1)\rightarrow (2,1,2,1)\rightarrow (1,1,1)\rightarrow (0,0).
\]
It follows from~\cite[Theorem~2.3]{Li14} that $\al$ is quasi--positive. 

(2) As another example one may consider the knot $12_{n0708}$, which according to Knotinfo is slice, 
chiral and equal to the closure of the 3--braid $\be=\si_1\si_2^{-3}\si_1\si_2^{-1}\si_1\si_2\si_1^{-1}\si_2^3$. 
Applying~\cite[Proposition~2.1]{Mu74} one can check that $\be$ is conjugate to 
$(\si_1\si_2)^3\si_1\si_2^{-3}\si_1^2\si_2^{-4}$, corresponding to the string obtained 
changing the two $1$'s into $2$'s in $(3,1,2,4,1,2,2)$, which is an iterated blowup of $(0,0)$. 
Therefore $12_{n0708}$ is also quasi--positive.
Previously, the quasi--positivity of $12_{n0708}$ and $12_{n0721}$ appear to have been unknown; 
compare~\cite{CL15}~\footnote{Although there exists an algorithm to establish the quasi--positivity 
of any 3--braid (but not the quasi--positivity of any 3--braid closure)~\cite{Or04}.}. 
\end{exas}

Before we can state our main result we need a little more terminology. Let $P\subset\R^2$ be 
a regular polygon with $t\geq 2$ vertices. Let $V_P=\{v_1,\ldots, v_t\}$ and $E_P=\{e_1,\ldots, e_t\}$ be 
the sets of vertices and edges of $P$, indexed so that, for each $i=1,\ldots, t-1$, the edge $e_i$ is between 
$v_i$ and $v_{i+1}$. A~{\em labelling} of $P$ will be a pair $(X,Y)$ of maps $X\co V_P\to\bN$ and $Y\co E_P\to\bN$. 
We say that a $2t$--uple $(x_1,y_1,x_2,y_2,\ldots, x_t,y_t)\in\bN^{2t}$ {\em encodes} a labelling $(X,Y)$ of $P$ 
if 
\[
(x_1,y_1,x_2,y_2,\ldots, x_t,y_t) = (X(v_1),Y(e_1),\ldots, X(v_t), Y(e_t)). 
\]
Given a labelling $(X,Y)$ and a symmetry 
$\varphi\co P\to P$ of $P$, one can define a new labelling of $P$ by setting 
$(X,Y)^{\varphi}:=(X\circ\varphi_V, Y\circ\varphi_E)$, where $\varphi_V\co V_P\to V_P$ and $\varphi_E\co E_P\to E_P$ 
are the $1-1$ maps induced by $\varphi$. We are now ready to state our main result. 

\begin{thm}\label{t:main} 
Let $K\subset S^3$ be a $3$--braid knot of finite concordance order. Then, one of the 
following holds:
\begin{enumerate} 
\item
either $K$ or $K^m$ is the closure of a 3--braid $\be$ of the form
\[
\be = (\si_1\si_2)^3\si_1^{x_1}\si_2^{-y_1}\cdots\si_1^{x_t}\si_2^{-y_t},\quad t, x_i, y_i\geq 1,
\] 
where $\sum_i y_i = \sum_i x_i +4$ and the string of positive integers 
$
(x_1+2,\overbrace{2,\ldots,2}^{y_1-1},\ldots,x_t+2,\overbrace{2,\ldots,2}^{y_t-1})
$
is obtained from an iterated blowup of $(0,0)$ by replacing two $1$'s with $2$'s. 
Moreover, $\be$ is quasi--positive and $K$ is ribbon. 
\item
$K$ is a symmetric union of the form $L_a$ given in Figure~\ref{f:symmunion}, where $a\in B_3$;
\begin{figure}[ht]
\centering
\labellist
\hair 2pt
\pinlabel \large\rotatebox[origin=c]{90}{$a$} at 265 168
\pinlabel \large\rotatebox[origin=c]{270}{$a^{-1}$} at 50 168
\endlabellist
\centering
\includegraphics[scale=0.4]{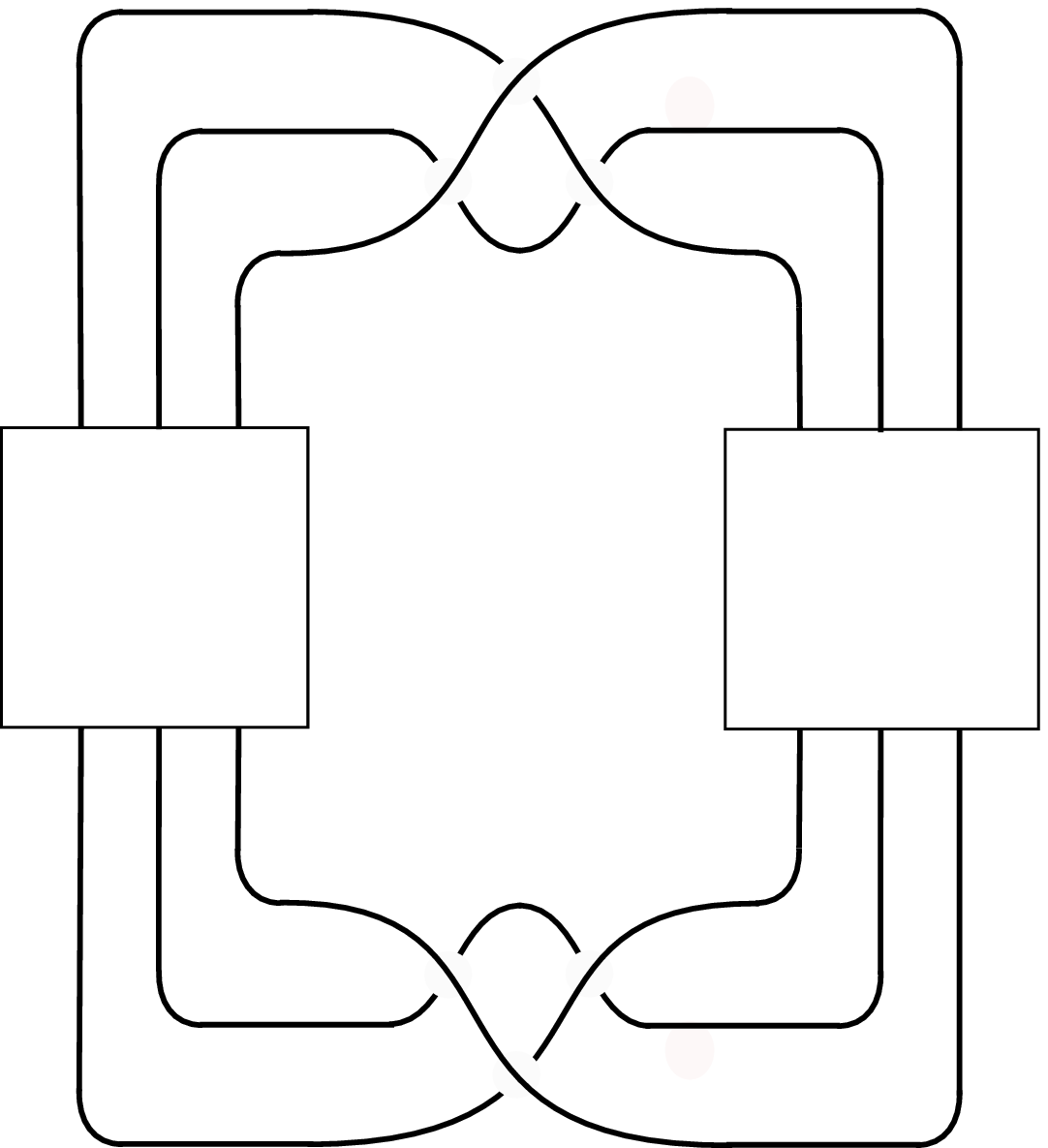}
\caption{The link $L_a$}
\label{f:symmunion} 
\end{figure} 
\item
$K$ is isotopic to the closure of a 3--braid $\be$ of the form 
\[
\be = \si_1^{x_1}\si_2^{-y_1}\cdots\si_1^{x_t}\si_2^{-y_t},\quad 
t\geq 2,\ x_1,\ldots, x_t\geq 1,
\]
where $\sum_i y_i = \sum_i x_i$ 
and there exist a regular polygon $P$ with $t$ vertices and a symmetry $\varphi\co P\to P$ such that 
$(X,Y) = (Y,X')^\varphi$, where $(X,Y)$ is the labelling encoded by $(x_1,y_1,\ldots, x_t,y_t)$ 
and $(Y,X')$ is the labelling encoded by $(y_1,x_2, y_2,x_3,\ldots, y_t,x_1)$. 
Moreover, $K$ is amphichiral.
\end{enumerate} 
\end{thm}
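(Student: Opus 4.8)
The plan is to classify 3–braid knots of finite concordance order by first normalizing the braid and then combining Baldwin's correction–term computation with Donaldson's Theorem~A applied to the two–fold branched cover. I would begin with Murasugi's normal form for 3–braids: up to conjugacy, a 3–braid is (after passing to the mirror if necessary) either of the generic form $(\si_1\si_2)^{3d}\si_1^{x_1}\si_2^{-y_1}\cdots\si_1^{x_t}\si_2^{-y_t}$ with all $x_i,y_i\geq 1$, or it lies in one of a few exceptional families (powers of $\si_1\si_2$, $(\si_1\si_2)^{3d}\si_1^k$, etc.). For the knot closure to have finite concordance order, its signature and the Tristram–Levine signatures must vanish, and the Alexander polynomial must satisfy the Fox–Milnor condition; a first pass using these classical obstructions, together with the fact that the signature of a 3–braid closure is computable from the Murasugi normal form, should eliminate the exceptional families and pin down $d=1$, i.e.\ the generic form with framing $(\si_1\si_2)^3$ out front, and force the balancing condition $\sum_i y_i=\sum_i x_i+4$ (this is exactly the exponent–sum/writhe constraint making the signature vanish). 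This reduces everything to the two–parameter families of items (1) and (3), plus the borderline cases that will turn into item (2).

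Next I would bring in the two–fold branched cover $\Sigma(K)$. Since $K$ has finite concordance order, a standard argument (Casson–Gordon / Heegaard Floer, as used by Baldwin) shows that $\Sigma(K)$ bounds a rational homology ball, hence bounds a negative–definite 4–manifold obtained by capping off a natural plumbing; Donaldson's Theorem~A then forces the corresponding intersection lattice to embed in the standard diagonal lattice $\bZ^N$. For 3–braid knots $\Sigma(K)$ is a Seifert fibered space whose plumbing graph is a linear chain with weights read off from the continued fraction of the string $(x_1+2,2,\ldots,2,\ldots,x_t+2,2,\ldots,2)$ (the $y_i-1$ copies of $2$). I would then invoke Baldwin's correction–term computation to constrain the $d$-invariants of $\Sigma(K)$, which in turn forces the string to be realizable as the boundary of such a negative–definite plumbing embedding into $\bZ^N$. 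The arithmetic of these embeddings — which is precisely the arithmetic studied in \cite{Li14} — is what produces the blow–up structure: a string bounds a definite embedding of the required type exactly when, after replacing two $1$'s by $2$'s, it arises from an iterated blow–up of $(0,0)$. This is the step where I would lean most heavily on \cite[Theorem~2.3]{Li14} and the preliminary discussion in the excerpt, and it is the heart of case (1); the conclusion that $\be$ is quasi–positive and that $K$ is ribbon follows from \cite{Li14} once the blow–up description is in hand (the ribbon disk is built explicitly from the blow–up sequence).

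The remaining possibility is that the combined constraints do not force the full blow–up condition but only a weaker symmetry of the string; this is where item (3) and item (2) come from. Here I would analyze when the lattice embedding forces the defining data to be invariant under an involution — concretely, when the labelling $(X,Y)$ of the polygon $P$ must agree with the ``dual shifted'' labelling $(Y,X')$ after applying a symmetry $\varphi$ of $P$. Such a symmetry of the braid word translates into an isotopy of the closure realizing $K^m\simeq K$ or $K^m\simeq -K$, giving amphichirality in case (3); and in the borderline/degenerate configurations the knot is recognized directly as the symmetric union $L_a$ of Figure~\ref{f:symmunion}, which is manifestly ribbon. The main obstacle, I expect, is not any single step but the bookkeeping in the lattice–embedding analysis: one must show that the Donaldson obstruction, sharpened by Baldwin's $d$-invariant computation, is \emph{exactly} strong enough to yield the trichotomy and no more — i.e.\ that every string surviving the obstruction falls into one of the three described shapes, and conversely that each shape genuinely occurs. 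Handling the many near–boundary cases of Murasugi's normal form without either over– or under–counting is the delicate part.
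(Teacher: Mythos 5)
Your high--level outline (Murasugi normal form plus signature to normalize the braid, double branched cover plus Donaldson to get a lattice embedding, then combinatorics of the embedding) does match the paper's strategy, but there are two genuine gaps. First, the claim that finiteness of the concordance order implies $\Sigma(K)$ bounds a rational homology ball is false: if $K$ has order $k>1$, it is the $k$--fold connected sum $\#^k K$ that is slice, so only $\#^k\Sigma(K)$ bounds a rational homology ball. The correct statement (Proposition~\ref{p:embed}) is that the orthogonal sum $\La_\Ga^k$ of $k$ copies of the Goeritz/plumbing lattice embeds with odd index in $\bZ^{k\sum_i y_i}$, and the subsequent combinatorial analysis must therefore handle a subset of $\bZ^N$ with $k$ connected components, not one. (Two smaller inaccuracies in the same step: the relevant weighted graph $\Ga$ is a cycle, not a linear chain, and $\Sigma(K)$ is generally not Seifert fibered; also the paper does not actually invoke Baldwin's $d$--invariant computation but re-derives his normal form from the signature formula of~\cite{Er99} together with the slice--Bennequin inequality.)

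Second, and more seriously, the heart of the theorem --- that an odd--index definite embedding of $\La_\Ga^k$ forces the string of weights to be either a modified iterated blowup of $(0,0)$ (Family~(1)), an iterated $(-2)$--expansion of $(-2,-2,-5)$ (Family~(2)), or to carry the polygon symmetry of Family~(3) --- is asserted rather than proved. You cite \cite[Theorem~2.3]{Li14} for this, but that theorem runs in the opposite direction: it deduces quasi--positivity \emph{from} the blowup structure; it does not tell you that every embeddable string has that structure. The forward implication is exactly the content of Section~\ref{s:latan} of the paper: one shows the Wu element $W=\sum_v v$ is characteristic (using oddness of the index), produces an adapted canonical basis, and then runs separate contraction arguments for the semipositive case ($d=1$, yielding the blowup condition) and the positive case ($d=0$, where the dichotomy ``the map $\W\to\E$ is injective or not'' produces Families~(3) and~(2) respectively). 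None of this, nor the explicit inductive identification of the $(-2,-2,-5)$--expansion knots with the symmetric unions $L_a=\widehat\be_a$, nor the braid--word manipulation showing amphichirality in Family~(3), appears in your proposal, so as written the argument does not close.
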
 
It is natural to wonder how the three families of knots appearing in Theorem~\ref{t:main} intersect each other. 
It is easy to check that a knot belonging to Family~(1) is the closure of a 3--braid $\be$ 
satisfying $\lk(\be)=\pm 2$, where $\lk\co B_3\to\bZ$ denotes the abelianization homomorphism 
(exponent sum with respect to the standard generators $\si_i$). On the other hand, a knot $K$ belonging 
to Family~(2) or Family~(3) is the closure of a 3--braid $\be$ with $\lk(\be)=0$. By the main result of~\cite{BM93}, 
a link which can be represented as a 3--braid closure admits a unique conjugacy class of 3--braid representatives, 
with the exceptions of (i) the unknot, which can be represented only by the conjugacy classes of 
$\si_1\si_2$, $\si_1\si_2^{-1}$ or $\si_1^{-1}\si_2^{-1}$, (ii) a type $(2,k)$ torus link with $k\neq\pm 1$ 
and (iii) a special class of links admitting at most two conjugacy classes of 3--braid representatives 
having the same exponent sum. By the results of~\cite{Mu74} none of the 3--braids giving rise to the knots of Family~(1) is conjugate to either $\si_1\si_2$ or $\si_1^{-1}\si_2^{-1}$, therefore the unknot does not belong to the first family.  
Moreover, a $(2,k)$--torus knot with $k\neq\pm 1$ has non--vanishing signature, while clearly knots 
belonging to Family~(2) have vanishing signature. By the computations of~\cite{Er99} (see the proof of Lemma~\ref{l:linking-finite-order}) knots belonging to Family~(3) also have vanishing signature. Therefore we can 
conclude that Family~(1) is disjoint from the union of Families~(2) and~(3). 
On the other hand, there are knots belonging simultaneously to Families~(2) and~(3). 
The easiest example is the knot $8_9$, which coincides with $L_{\si_1\si_2^{-1}\si_1^2}$ and therefore belongs to 
Family~(2), while according to Knotinfo~\cite{CL15} 
it is the closure of $\si^3\si_2^{-1}\si_1\si_2^{-3}$ and therefore belongs to 
Family~(3) as well. As a final comment we point out that both Families (1) and (2) contain chiral knots. For instance, 
according to Knotinfo the 3--braid slice knot $8_{20}$ is chiral and equal to the closure of a quasi--negative 
3--braid, therefore it belongs to Family (1). 
More examples of chiral knots belonging to Family~(1) are $12_{n0708}$ and $12_{n0721}$ described in the 
examples before Theorem~\ref{t:main}. 
Similarly, the 3--braid slice knots $10_{48}$ and $12_{a1011}$ are chiral and neither quasi--positive nor 
quasi--negative, therefore they belong to Family (2). 

The following corollary follows immediately from Theorem~\ref{t:main}.

\begin{cor}\label{c:main}
A chiral 3--braid knot of finite concordance order is ribbon. 
\qed\end{cor}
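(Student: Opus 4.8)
The plan is to read the statement off directly from Theorem~\ref{t:main}, which does all the work. Let $K$ be a chiral 3--braid knot of finite concordance order. By Theorem~\ref{t:main}, $K$ lies in (at least) one of the three listed families, so it suffices to check that in each of the cases compatible with chirality the knot is ribbon.

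First I would dispose of Family~(3): a knot in that family is amphichiral. Since ``chiral'' is defined as ``not amphichiral'', this contradicts the hypothesis on $K$, so this case simply cannot occur. Next, if $K$ belongs to Family~(1), there is nothing to prove, because the last sentence of Theorem~\ref{t:main}(1) asserts outright that $K$ is ribbon (indeed that the associated 3--braid is quasi--positive). Finally, suppose $K$ belongs to Family~(2), so that $K=L_a$ is a symmetric union knot for some $a\in B_3$. Every symmetric union knot is ribbon --- this is part of the Kinoshita--Terasaka construction recalled just before Theorem~\ref{t:main} --- hence $K$ is ribbon in this case as well. Combining the three cases yields the corollary.

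There is no genuine obstacle here: the entire content has already been packaged into Theorem~\ref{t:main}. The only point requiring even a moment's attention is the bookkeeping of terminology, namely that the amphichirality conclusion of Family~(3) is \emph{literally} the negation of the chirality hypothesis, so excluding that family is immediate and needs no auxiliary invariant (signature, etc.) computation. Thus the ``hard part'', such as it is, was entirely the proof of Theorem~\ref{t:main} itself.
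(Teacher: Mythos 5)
Your argument is exactly the one the paper intends: the corollary is stated with a bare \qed precisely because Family (3) is excluded by amphichirality, Family (1) is declared ribbon in the theorem itself, and Family (2) consists of symmetric unions, which are ribbon by construction (as the paper recalls when introducing the notion). Correct, and the same approach as the paper.
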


\begin{rmks}
(1) Corollary~\ref{c:main} holds for 2--bridge knots. Indeed, by~\cite[Corollary~1.3]{Li07-2} 
a chiral 2--bridge knot of finite concordance order is slice, 
and by~\cite{Li07-1} a slice 2--bridge knot is ribbon. 

(2) It is not difficult to find chiral knots of finite concordance order which do not satisfy the conclusion 
of Corollary~\ref{c:main} (and therefore cannot be closures of 3--braids). For instance, according to 
Knotinfo~\cite{CL15} the chiral 4-braid knots $9_{24}$ and $9_{37}$ have concordance order $2$. 

(3) There are $185$ knots with crossing number at most twelve and braid index $3$. 
Among these, $12$ are ribbon (some of which quasi--positive or quasi--negative), $17$ are amphichiral and have 
concordance order $2$, while $151$ have infinite concordance order. The remaining knots $10_{91}$, 
$12_{a1199}$, $12_{a1222}$, $12_{a1231}$ and  $12_{a1258}$ are chiral and non--slice. 
In view of Corollary~\ref{c:main}, the five knots above have infinite concordance order. 
Previously, their concordance orders appear to have been unknown; compare~\cite{CL15}.
\end{rmks}

Theorem~\ref{t:main} naturally leads to the following problem, to which we hope to return in the future.

\begin{problem}\label{prob:family3}
Determine the concordance orders of the knots in Family (3) of Theorem~\ref{t:main}. 
\end{problem}
 
The paper is organized as follows. In Section~\ref{s:prelim} we collect some preliminary results on 
knots of finite concordance order which are 3--braid closures.  
The purpose of the section is to prove Proposition~\ref{p:embed}, which uses Donaldson's `Theorem A' to 
show that if a 3--braid knot $K$ has concordance order $k$, then the orthogonal sum of $k$ copies 
of a certain negative definite integral lattice embeds isometrically in the standard negative 
definite lattice of the same rank. In Section~\ref{s:latan} we draw the lattice--theoretical consequences 
of the existence of the isometric embedding given by Proposition~\ref{p:embed}. Section~\ref{s:latan} contains 
the bulk of the technical work. In Section~\ref{s:proof} we prove Theorem~\ref{t:main} using 
the results of Sections~\ref{s:prelim} and~\ref{s:latan}. 

\bigskip
\noindent 
{\bf Acknowledgements:} 
The author wishes to warmly thank an anonymous referee for her/his extremely thorough 
job, which helped him to correct several mistakes and to improve the exposition.

\section{Preliminaries}\label{s:prelim}
The following simple lemma uses the slice--Bennequin inequality~\cite{Ru93} to 
establish a basic property of braid closures having finite concordance order. 

\begin{lemma}\label{l:linking-finite-order}
Let $K\subset S^3$ be a knot of finite concordance order which is the closure of a braid $\be\in B_n$. 
Then, \[1-n\leq\lk(\be)\leq n-1,\] where $\lk\co B_n\to\bZ$ is the exponent sum homomorphism.
\end{lemma}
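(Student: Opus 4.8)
The plan is to apply the slice--Bennequin inequality to both $K$ and its mirror $K^m$, exploiting the fact that a knot of finite concordance order has vanishing signature and, more importantly, vanishing $\tau$ (or more elementarily, that its concordance class is torsion, so certain slice-type invariants must vanish). Recall that the slice--Bennequin inequality of Rudolph~\cite{Ru93} states that for the closure $\hat\be$ of a braid $\be\in B_n$ on $n$ strands with $c$ crossings counted with sign (so $c=\lk(\be)$), one has $g_4(\hat\be)\geq \tfrac12(\lk(\be)-n+1)$ when, say, $\hat\be$ is a knot; equivalently the writhe-minus-Seifert-genus bound gives a lower bound on the slice genus in terms of $\lk(\be)-n+1$.

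First I would recall that if $[K]\in\C$ has finite order $k$, then $k[K]=0$, so the connected sum of $k$ copies of $K$ is slice; in particular its slice genus is $0$. Slice genus is subadditive under connected sum, but that alone does not force $g_4(K)=0$. Instead I would use the additivity of the $\tau$-invariant (or of the signature) under connected sum together with the slice--Bennequin-type inequality $\tau(\hat\be)\geq \tfrac12(\lk(\be)-n+1)$ for a braid closure. Since $\tau$ is a concordance homomorphism $\C\to\bZ$ and $[K]$ is torsion, we get $\tau(K)=0$. Applying the inequality to the braid $\be$ representing $K$ gives $0=\tau(K)\geq \tfrac12(\lk(\be)-n+1)$, i.e.\ $\lk(\be)\leq n-1$.

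Next I would run the symmetric argument with the mirror. The mirror knot $K^m$ is the closure of the braid $\bar\be$ obtained from $\be$ by replacing each generator $\si_i^{\pm 1}$ by $\si_i^{\mp 1}$, so that $\bar\be\in B_n$ and $\lk(\bar\be)=-\lk(\be)$. Since $-K^m$ is also of finite concordance order (the torsion subgroup is closed under the operations defining $\C$, and mirroring/reversing preserves torsion), $\tau(K^m)=0$ as well, and the slice--Bennequin inequality applied to $\bar\be$ yields $0\geq \tfrac12(\lk(\bar\be)-n+1)=\tfrac12(-\lk(\be)-n+1)$, i.e.\ $\lk(\be)\geq 1-n$. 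Combining the two bounds gives $1-n\leq \lk(\be)\leq n-1$, as claimed.

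The main obstacle, and the only genuinely delicate point, is justifying that the relevant slice-obstruction invariant (signature or $\tau$) vanishes for a finite-order class and that the slice--Bennequin inequality is available in exactly the form needed for braid closures that happen to be knots; one must be slightly careful about the sign conventions relating the exponent sum $\lk(\be)$, the writhe of the closure, and the Seifert genus of the Bennequin surface, and about whether one uses the signature (which is classical and sign-definite but weaker) or Rudolph's stronger statement. Since the paper only needs the two-sided linear bound, using the signature $\si(K)$ via the classical Bennequin-type inequality $\si(K)\le -(\lk(\be)-n+1)$ suffices and avoids invoking Heegaard Floer machinery; the fact that $\si(K)=0$ for finite-order $K$ is standard.
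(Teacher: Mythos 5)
Your main argument, via the concordance homomorphism $\tau$, is correct and genuinely different from the paper's. The paper never invokes a homomorphism-valued invariant: it applies Rudolph's slice--Bennequin inequality directly to an $mn$--braid $\eta$ whose closure is $mK = K\#\cdots\#K$ ($m$ the concordance order), so that $g_s(mK)=0$ honestly, and the computation $\lk(\eta)=m\lk(\be)+m-1$ turns the inequality $1\leq mn-\lk(\eta)$ into $\lk(\be)\leq n-1$; the mirror gives the other bound. This sidesteps exactly the issue you identify (that $g_s(K)$ itself need not vanish) without leaving the elementary setting of~\cite{Ru93}. Your route instead uses that $\tau\co\C\to\bZ$ is a homomorphism, so $\tau(K)=0$ for torsion classes, together with the bound $\lk(\be)-n+1\leq 2\tau(\hat\be)$ (Plamenevskaya's transverse bound $sl\leq 2\tau-1$ applied to the braid closure). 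That is a valid proof, at the cost of importing Heegaard Floer (or, with $s$, Khovanov) machinery where the paper needs only the four--ball genus.

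However, the ``more elementary'' fallback in your last paragraph is a genuine error: the signature does \emph{not} satisfy a Bennequin-type inequality of the form $\si(K)\leq -(\lk(\be)-n+1)$. The signature obeys $|\si(K)|\leq 2g_s(K)$ but is far from sharp on positive braid closures: for large torus knots $T(p,q)$, presented as closures of positive braids with $\lk-n+1=(p-1)(q-1)=2g_s$, one has $|\si(T(p,q))|\sim pq/2$, roughly half of $(p-1)(q-1)$ (e.g.\ $T(5,6)$ has $2g_s=20$ but $|\si|=16$), so the claimed inequality fails. Consequently ``$\si(K)=0$'' (which is indeed standard for finite-order $K$) does not by itself bound $\lk(\be)$. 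If you want to avoid Floer-theoretic input, you must either use $\tau$ or $s$ as in your main argument, or pass to the connected sum as the paper does.
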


\begin{proof} 
Recall that, if a knot $N\subset S^3$ is the closure of an $n$--braid $\ga\in B_n$, 
the slice Bennequin inequality~\cite{Ru93} reads
\begin{equation}\label{e:sBI}
1 - g_s(N) \leq n - \lk(\ga),
\end{equation}
where $g_s(N)$ is the slice genus of $N$. Now observe that, for each $m\geq 1$, the knot $mK:=K\#\stackrel{(m)}{\cdots}\#K$ is the closure of the $mn$--braid $\eta$ described in Figure~\ref{f:mnbraid}. 
\begin{figure}[h] 
\labellist
\hair 2pt
\pinlabel $\be$ at 128 45
\pinlabel $\be$ at 128 260
\pinlabel $\be$ at 128 372
\pinlabel \rotatebox[origin=c]{90}{$\cdots$} at  40 47
\pinlabel \rotatebox[origin=c]{90}{$\cdots$} at  40 262
\pinlabel \rotatebox[origin=c]{90}{$\cdots$} at  40 374
\pinlabel \rotatebox[origin=c]{90}{$\cdots$} at  215 47
\pinlabel \rotatebox[origin=c]{90}{$\cdots$} at  215 262
\pinlabel \rotatebox[origin=c]{90}{$\cdots$} at  215 374
\pinlabel \rotatebox[origin=c]{90}{$\cdots$} at  310 47
\pinlabel \rotatebox[origin=c]{90}{$\cdots$} at  310 262
\pinlabel \rotatebox[origin=c]{90}{$\cdots$} at  310 374
\pinlabel \rotatebox[origin=c]{90}{$\cdots$} at  260 156
\pinlabel \rotatebox[origin=c]{90}{$\cdots$} at  128 156
\endlabellist
\centering
\includegraphics[scale=0.4]{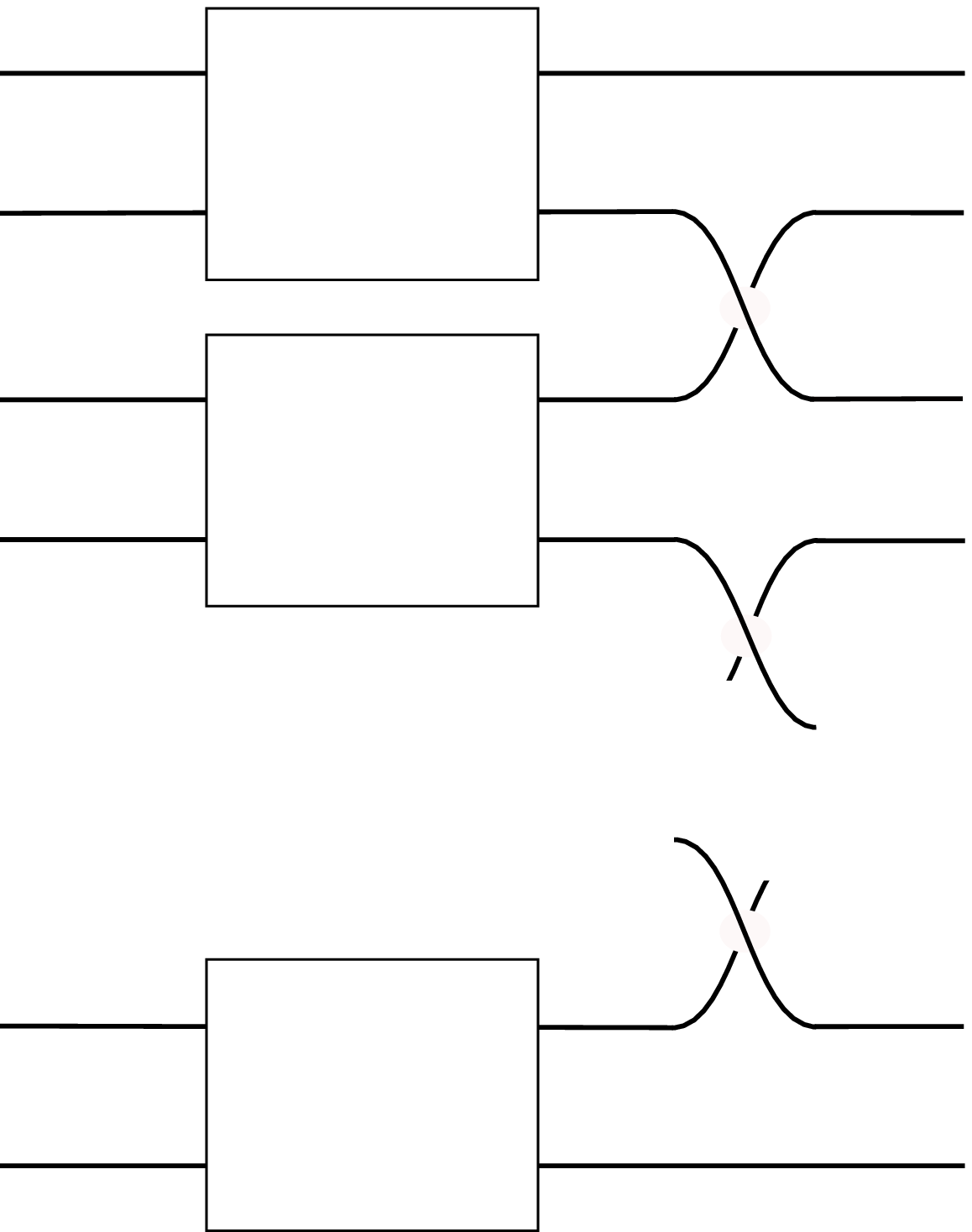}
\caption{The $mn$--braid $\eta$ with $\hat\eta=mK$}
\label{f:mnbraid}
\end{figure} 
If $K$ has concordance order $m$ then $g_s(mK)=0$. 
Since $\lk(\eta)=m\lk(\be)+m-1$, applying Inequality~\eqref{e:sBI} to $\eta$ we get 
the inequality $\lk(\be)\leq n-1$. Notice that the mirror image of $K$ is a knot of concordance order $m$ 
which is the closure of a  3--braid $\bar\be$ such that $\lk(\bar\be) = -\lk(\be)$. Arguing as before we have 
$\lk(\bar\be)\leq n-1$, and the statement follows.
\end{proof} 

In~\cite{Ba08} Baldwin combines Murasugi's normal form 
for 3--braid closures~\cite{Mu74} and the signature computations of~\cite{Er99} 
with computations of the concordance invariant $\delta$ defined by Manolescu--Owens~\cite{MO07} 
to establish the following proposition. Here we provide a short proof of Baldwin's result based 
on~\cite{Mu74, Er99} and Lemma~\ref{l:linking-finite-order}. 

\begin{prop}[{\cite[Prop.~8.6]{Ba08}}]\label{p:prelim}
Let $K\subset S^3$ be a $3$--braid knot of finite concordance order. 
Then, $K$ is the closure of a 3--braid $\be\in B_3$ of the form 
\begin{equation}\label{e:prelim}
\be = (\si_1\si_2)^{3d} \si_1^{x_1}\si_2^{-y_1}\cdots\si_1^{x_t}\si_2^{-y_t},\quad
t, x_i, y_i\geq 1, 
\end{equation}
where $d\in\{-1,0,1\}$ and $\sum_{i=1}^t (x_i-y_i) = -4d$. Moreover, if $d=\pm 1$ then  
up to replacing $K$ with $K^m$ one can take $d=1$ in Equation~\eqref{e:prelim}. 
\end{prop}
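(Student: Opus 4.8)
The goal is to bring any 3--braid knot of finite concordance order into the Murasugi--type normal form~\eqref{e:prelim}. The plan is to start from Murasugi's classification of conjugacy classes of 3--braids~\cite{Mu74}: every 3--braid is conjugate to one of the following three types, $(\si_1\si_2)^{3d} \si_1^{x_1}\si_2^{-y_1}\cdots\si_1^{x_t}\si_2^{-y_t}$ with $x_i, y_i\geq 1$; $(\si_1\si_2)^{3d}\si_2^{-k}$ for $k\geq 0$; or $(\si_1\si_2)^{3d}\si_1^k$ for $k\in\{-1,-2,-3\}$. First I would dispose of the second and third families. Closures of braids of the form $(\si_1\si_2)^{3d}\si_2^{-k}$ (and, symmetrically, $(\si_1\si_2)^{3d}\si_1^{k}$) are torus links $T(2,m)$ possibly connect--summed with an unknotted component or reducible: a short case analysis, using~\cite{Er99} to compute the signature, shows that the only knots arising this way are $(2,m)$--torus knots or the unknot. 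A nontrivial $(2,m)$--torus knot has nonzero signature, hence infinite concordance order, so the only survivor is the unknot, which is the closure of $(\si_1\si_2)^0\si_1^{x_1}\si_2^{-y_1}$ with $x_1=y_1=1$, already of the form~\eqref{e:prelim} with $d=0$. Thus we may assume $K$ is the closure of a braid $\be$ of the first type.

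\smallskip
\textbf{Bounding $d$.} For a braid $\be$ of the first type one computes $\lk(\be) = 6d + \sum_i(x_i - y_i)$. Now apply Lemma~\ref{l:linking-finite-order} with $n=3$: since $K=\hat\be$ has finite concordance order, $-2\leq\lk(\be)\leq 2$. On the other hand, by Murasugi's normal form we may also assume the exponent sums of the $\si_1$-- and $\si_2^{-1}$--blocks are arranged so that $\sum_i x_i\geq 1$ and $\sum_i y_i\geq 1$; the quantity $e := \sum_i(x_i-y_i)$ can a priori be any integer, but combining it with the $\lk$ bound forces $6d + e \in [-2,2]$, i.e. $e = -6d + r$ with $|r|\leq 2$. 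The second input is the signature: by Erle's formulas~\cite{Er99} the signature of $\hat\be$ is determined by $d$ and the block data, and for $K$ to have finite concordance order its signature (indeed, all of its Tristram--Levine and $\delta$ invariants) must vanish; Erle's computation shows $\sigma(\hat\be)$ grows linearly in $|d|$ unless the $x_i, y_i$ contributions exactly cancel it, which pins down $e = -4d$ and simultaneously forces $|d|\leq 1$. Concretely: with $e=-4d$ the constraint $6d+e=2d\in[-2,2]$ gives $d\in\{-1,0,1\}$, and $\sum_i(x_i-y_i) = e = -4d$ as required.

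\smallskip
\textbf{The mirror normalization.} It remains to observe that replacing $K$ by $K^m$ replaces $\be$ by the braid obtained by inverting all crossings, which up to conjugacy (and a relabelling $\si_1\leftrightarrow\si_2$, which is realized by conjugation by the half--twist) sends a first--type braid with parameter $d$ to one with parameter $-d$, while preserving the shape~\eqref{e:prelim}. Hence if $d=-1$ we may pass to $K^m$ and assume $d=1$; if $d=0$ nothing is needed. This proves the Proposition.

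\smallskip
\textbf{Main obstacle.} The delicate point is the simultaneous use of the two constraints --- the linking--number bound from Lemma~\ref{l:linking-finite-order} and the vanishing of the signature via~\cite{Er99} --- to deduce both $|d|\leq 1$ and the exact relation $\sum_i(x_i-y_i) = -4d$. I expect the bookkeeping in Erle's signature formula to be the fussy part: one must check that the block contributions to $\sigma(\hat\be)$ can cancel the $(\si_1\si_2)^{3d}$ contribution \emph{only} when $e=-4d$, and that this cancellation is forced (not merely possible) once finite concordance order is assumed. This is exactly the place where Baldwin used the $\delta$--invariant; the present streamlined argument instead leans on $\sigma$, so one has to make sure the signature alone already suffices to rule out the other values of $d$ and $e$.
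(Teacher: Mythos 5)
Your proposal is correct and follows essentially the same route as the paper: Murasugi's normal form, Erle's signature computation forcing $\sigma(K)=0$ and hence $\sum_i(x_i-y_i)=-4d$, the slice--Bennequin bound of Lemma~\ref{l:linking-finite-order} giving $\lk(\be)=2d\in[-2,2]$ and thus $d\in\{-1,0,1\}$, and a conjugation/relabelling argument for the mirror normalization. The worry in your final paragraph is unfounded --- Erle's formula yields $\sigma(K)=2d-\lk(\be)$ outright, so the vanishing of $\sigma$ alone (no $\delta$--invariant needed) pins down $\sum_i(x_i-y_i)=-4d$, exactly as in the paper's streamlined proof.
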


\begin{proof}
As observed in~\cite[Remark~8.4]{Ba08}, the results of~\cite{Mu74} 
immediately imply that a 3--braid knot of finite concordance order is 
either the unknot or is isotopic to the closure $\hat\be$ of a 3--braid $\be$ of the form:
\begin{equation}\label{e:proof-braid} 
\be = (\si_1\si_2\si_1)^{2d} \si_1 \si_2^{-a_1} \cdots \si_1\si_2^{-a_n},\quad a_i\geq 0,\ 
\text{some $a_j>0$},
\end{equation}
where $d\in\bZ$. By~\cite{Er99} $K$ has signature 
\begin{equation}\label{e:signature}
\si(K) = -n -4d +\sum_{i=1}^n a_i = 2d - \lk(\be),
\end{equation}
where $\lk\co B_3\to\bZ$ is the exponent sum homomorphism. Since $\si :\C\to\bZ$ is a homomorphism, 
the fact that $K$ has finite concordance order implies $\si(K)=0$, therefore $\lk(\be) = 2d$. Moreover,  
Lemma~\ref{l:linking-finite-order} implies $d\in\{-1,0,1\}$. Since 
$(\si_2\si_1\si_2)^2=(\si_1\si_2\si_1)^2=(\si_1\si_2)^3$ in $B_3$, if $d=1$ we are done. If $d=-1$ and 
$K$ is the closure of $(\si_1\si_2\si_1)^{-2} \si_1^{x_1}\si_2^{-y_1}\cdots\si_1^{x_t}\si_2^{-y_t}$ 
then $K^m$ is the closure of 
\[
(\si_1^{-1}\si_2^{-1}\si_1^{-1})^{-2} \si_1^{-x_1}\si_2^{y_1}\cdots\si_1^{-x_t}\si_2^{y_t} = 
(\si_1\si_2\si_1)^2 \si_1^{-x_1}\si_2^{y_1}\cdots\si_1^{-x_t}\si_2^{y_t}.
\]
If $f\co B_3\to B_3$ is the automorphism which sends $\si_1$ to $\si_2$ and $\si_2$ to $\si_1$, 
it is easy to check that for each $\be\in B_3$ the closure of $\be$ is isotopic to the closure of $f(\be)$.
Therefore, $K^m$ is also the closure of $(\si_2\si_1\si_2)^2 \si_2^{-x_1}\si_1^{y_1}\cdots\si_2^{-x_t}\si_1^{y_t}$, 
which is conjugate to 
\[
(\si_2\si_1\si_2)^2 \si_1^{y_1}\si_2^{-x_2}\cdots\si_1^{-x_t}\si_1^{y_t}\si_2^{-x_1}
\]
because the element $(\si_2\si_1\si_2)^2$ is central. This shows that up to replacing $K$ with $K^m$ 
we may assume $d\in\{0,1\}$, therefore, the braid $\be$ of Equation~\eqref{e:proof-braid} is of the form given in Equation~\eqref{e:prelim}. 
\end{proof}

Our next task is to show that a 2--fold cover of $S^3$ branched along a 3--braid knot of finite 
concordance order bounds a smooth 4--manifold with an intersection lattice $\La_\Ga$ associated 
with a certain weighted graph $\Ga$. 
Let $\La_\Ga$ be the free abelian group generated by the vertices of the integrally weighted graph  
$\Ga$ of Figure~\ref{f:graph}, where $d\in\bZ$, $t, x_i, y_i\geq 1$ and $\sum_i y_i\geq 2$. 
The graph $\Ga$ naturally determines a symmetric, bilinear form $\cdot:\La_\Ga\times\La_\Ga\to\bZ$ 
such that, if $v, w$ are two vertices of $\Ga$ then $v\cdot v$ equals the weight of $v$, $v\cdot w$ equals $1$ 
if $v, w$ are connected by an unlabelled edge, $(-1)^d$ if they are connected by the 
$(-1)^d$--labelled edge, and $0$ otherwise. 
\begin{figure}[h]
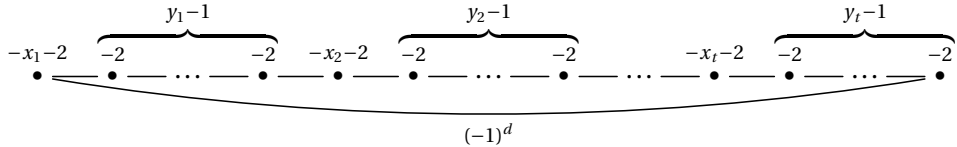

\centering
$ \xygraph{
!{<0cm,0cm>;<1cm,0cm>:<0cm,1cm>::}
!{(0,1) }*+{\bullet}="0"
!{(0,1.3)}*+{\scriptstyle -x_1-2}
!{(1,1)}*+{\bullet}="1"
!{(1,1.3)}*+{\scriptstyle -2}
!{(2,1)}*+{\ldots}="2"
!{(2,1.7)}*+{\overset{y_1 - 1}{\overbrace{\hspace{68pt}}}}
!{(3,1)}*+{\bullet}="3"
!{(3,1.3)}*+{\scriptstyle -2}
!{(4,1)}*+{\bullet}="4"
!{(4,1.3)}*+{\scriptstyle -x_2-2}
!{(5,1)}*+{\bullet}="5"
!{(5,1.3)}*+{\scriptstyle -2}
!{(6,1)}*+{\ldots}="6"
!{(6,1.7)}*+{\overset{y_2 - 1}{\overbrace{\hspace{68pt}}}}
!{(7,1)}*+{\bullet}="7"
!{(7,1.3)}*+{\scriptstyle -2}
!{(8,1) }*+{\ldots}="8"
!{(9,1) }*+{\bullet}="9"
!{(9,1.3)}*+{\scriptstyle -x_t-2}
!{(10,1) }*+{\bullet}="10"
!{(10,1.3)}*+{\scriptstyle -2}
!{(11,1) }*+{\ldots}="11"
!{(11,1.7)}*+{\overset{y_t - 1}{\overbrace{\hspace{68pt}}}}
!{(12,1) }*+{\bullet}="12"
!{(12,1.3)}*+{\scriptstyle -2}
"0"-"1"
"1"-"2"
"2"-"3"
"3"-"4"
"4"-"5"
"5"-"6"
"6"-"7"
"7"-"8"
"8"-"9"
"9"-"10"
"10"-"11"
"11"-"12"
"12"-@/^0.5cm/"0"^{(-1)^d}
}$
\caption{The weighted graph $\Ga$}
\label{f:graph}
\end{figure} 

\begin{lemma}\label{l:2foldcover}
Let $K\subset S^3$ be the closure of a 3--braid $\be\in B_3$ of the form given in Equation~\eqref{e:prelim}
for some $t, x_i, y_i\geq 1$ with $\sum_i y_i\geq 2$ and $d\in\{0,1\}$. Let $Y_K$ be the 2--fold 
cover of $S^3$ branched along $K$. Then, there is a smooth, oriented 4--manifold $M$  
whose intersection lattice $H_2(M;\bZ)/\Tor$ is isometric to $\La_\Ga$, with $\del M= - Y_K$. 
\end{lemma}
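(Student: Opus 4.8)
The plan is to realise $Y_K$ as the boundary of a double branched cover of $B^4$ and to read off its intersection lattice as $\La_\Ga$. Starting from the standard closed--braid diagram $D$ of $K=\hat\be$, where $\be=(\si_1\si_2)^{3d}\si_1^{x_1}\si_2^{-y_1}\cdots\si_1^{x_t}\si_2^{-y_t}$, I would choose a spanning surface $F\subset S^3$ for $K$ built from the braid word --- concretely, the checkerboard surface of a suitable diagram --- with $b_1(F)=\sum_i y_i$; one checks that exactly one of the two checkerboard colourings has this property (for $d=0$ both do), and the hypothesis $\sum_i y_i\geq2$ is what ensures that $F$ is connected and that no degenerate situation arises. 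The surface $F$ is a disk--band surface, and $H_1(F;\bZ)$ comes with a basis naturally indexed by the vertices of $\Ga$: the $t$ generators of square $-x_i-2$ correspond to the blocks $\si_1^{x_i}$, while inside each block $\si_2^{-y_i}$ one reads off a chain of $y_i-1$ generators of square $-2$.

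I would then push $F$ into $B^4$ and let $M$ be the double cover of $B^4$ branched along the pushed--in surface. With the appropriate orientation conventions $\del M=-Y_K$; and by the standard description of the intersection form of such a cover, in the spirit of Gordon--Litherland, $H_2(M;\bZ)/\Tor\cong H_1(F;\bZ)$ with intersection form equal to the Goeritz form of $F$ computed from the regions of the opposite colour.

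It then remains to compute this Goeritz form and verify that it is isometric to $\La_\Ga$. The combinatorics should match Figure~\ref{f:graph} exactly: two basis elements are joined by an unlabelled edge precisely when the corresponding regions share a crossing coming from consecutive letters of the word, so that reading $\si_1^{x_1}\si_2^{-y_1}\cdots\si_1^{x_t}\si_2^{-y_t}$ cyclically produces the cycle underlying $\Ga$, while the factor $(\si_1\si_2)^{3d}$ --- equivalently $d$ copies of the central element $(xy)^3$, thinking of $B_3$ as $\Mod(T)$ via $\psi$ --- contributes exactly the sign $(-1)^d$ on the closing edge. A more conceptual route, useful for checking the computation, is to recall that $Y_K$ is carried by the open book with page $T$ and monodromy $\psi(\be)=(xy)^{3d}x^{x_1}y^{-y_1}\cdots x^{x_t}y^{-y_t}\in\Mod(T)$; expanding this monodromy as a product of positive and negative Dehn twists along the two curves carrying $x$ and $y$ displays $-Y_K$ as the boundary of an achiral Lefschetz fibration over $D^2$ with fibre $T$, whose total space is precisely the plumbing $M$ along $\Ga$.

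I expect the main obstacle to be this last identification together with the orientation bookkeeping in the previous step: one has to single out the correct checkerboard surface, pin down all the signs so that $\del M=-Y_K$ rather than $+Y_K$ while keeping the intersection form equal to $+\La_\Ga$, and check that the full twist $(\si_1\si_2)^{3d}$ and the blocks $\si_1^{x_i}$, $\si_2^{-y_i}$ contribute, respectively, the label $(-1)^d$, the weights $-x_i-2$, and the strings of $-2$'s of Figure~\ref{f:graph}. None of the individual steps is deep, but making every sign and every edge agree with the figure on the nose is where the real work lies.
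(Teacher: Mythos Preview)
Your outline is essentially the paper's proof, but the paper organises it as two genuinely separate cases rather than a single unified argument. For $d=0$ the paper applies Gordon--Litherland exactly as you describe, with one twist: it works with the \emph{mirror} $K^m=\widehat{\be^m}$ and its black checkerboard surface, so that the resulting branched double cover $M$ automatically has $\del M=-Y_K$ rather than $+Y_K$; this sidesteps the orientation bookkeeping you flag as the main obstacle. For $d=1$ the paper does \emph{not} attempt a checkerboard computation from the standard braid diagram at all: it invokes the open book $(T,h)$ with $h=(\ga_1\ga_2)^3\ga_1^{x_1}\ga_2^{-y_1}\cdots$ and cites \cite[\S4]{Li14} for the plumbed $4$--manifold with lattice $\La_\Ga$. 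So what you call the ``more conceptual route, useful for checking'' is in fact the paper's primary argument when $d=1$.

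Your attempt to run the checkerboard argument uniformly for both values of $d$, reading the $(-1)^d$ on the closing edge directly off the factor $(\si_1\si_2)^{3d}$, is where your sketch is thinnest. From the \emph{standard} closed-braid diagram of $\be$ with $d=1$ the six extra crossings of $(\si_1\si_2)^3$ do not obviously leave $b_1(F)=\sum_i y_i$ intact nor collapse to a single sign on one edge; the paper's referee remark achieves this only after rewriting $\be$ via $(\si_1\si_2)^3=(\si_2\si_1^2)^2$ to obtain a specific almost--alternating diagram. If you want a unified Goeritz proof you should use that rewriting explicitly; otherwise, separating the two cases as the paper does is cleaner.
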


\begin{proof}
We prove the statement considering separately the two cases $d=0$ and $d=1$. 

{\em First case: $d=0$}. 
Let $\be^m$ denote the `mirror braid' obtained from $\be$ by replacing $\si_i$ with $\si_i^{-1}$, for $i=1,2$. Consider 
the regular projection $P\subset\bR^2$ of the closure $K^m = \widehat\be^m$ 
given in Figure~\ref{f:projection}.  
\begin{figure}[h]
\centering
\labellist
\hair 2pt
\pinlabel $\be^m$ at 113 78
\endlabellist
\centering
\includegraphics[scale=0.6]{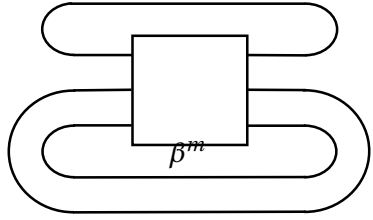}
\caption{Regular projection of $\widehat\be^m$}
\label{f:projection} 
\end{figure} 
Colour the regions of $\bR^2\setminus P$ alternately black and white, so that the unbounded region is white. The black 
regions determine a spanning surface $S$ for the knot $K^m$. Push the interior of $S$ into the 4--ball 
and let $M\to B^4$ be the associated 2--fold branched cover. Gordon and Litherland~\cite[Theorem~3]{GL78} 
give a recipe to compute the intersection form of $M$. Applying their recipe it is easy to check that the intersection 
lattice $H_2(M;\bZ)/\Tor$ is isomorphic to $\La_\Ga$. Clearly $\del M = -Y_K$, and the claim is proved when $d=0$.

{\em Second case: $d= 1$}. 
Let $T$ be an oriented, one--holed torus viewed as a 2--dimensional manifold with boundary and $\Diff^+(T,\del T)$ 
be the group of orientation--preserving diffeomorphisms of $T$ which restrict to the identity on the boundary. 

Let $Y_K$ be the 2--fold cover of $S^3$ branched along $K$, which  up to isotopy can be assumed 
to be positively transverse to the standard open book decomposition of $S^3$ 
with disk pages. Then, the pull--back of the standard open book on $S^3$ under the covering map $Y_K\to S^3$ has pages 
homeomorphic to a one--holed torus $T$ and, there are right--handed Dehn twists $\ga_1, \ga_2\in\Diff^+(T,\del T)$ 
along two simple closed curves in $T$ intersecting transversely once, such that the monodromy $h:T\to T$ is  
\[
h = (\ga_1\ga_2)^3 \ga_1^{x_1}\ga_2^{-y_1}\cdots\ga_1^{x_t}\ga_2^{-y_t}\in\Diff^+(T,\del T).
\]
In~\cite[\S 4]{Li14} it is shown, using the open book $(T,h)$, that $Y_K$ bounds a smooth, oriented, positive definite 
4--manifold $N$ such that the intersection lattice of $M=-N$ is isometric to $\La_\Ga$. 
\end{proof}

\begin{rem}[Pointed out by the referee]
It is possible to give another proof for the case $d=1$ of Lemma~\ref{l:2foldcover} using the same method as in the 
case $d=0$ by writing the knot $K$ as the almost--alternating closure of 
\[
\si_1^{x_1}\si_1^{-y_1}\si_2^{-x_2}\cdots\si_1^{x_t}\si_1^{-y_t+1}\si_2^{-x_1}\si_1^2\si_2\si_1^2,
\]
using the relation $(\si_1\si_2)^3 = (\si_2\si_1^2)^2$. This also gives an alternative proof of the following Lemma~\ref{l:negdef}: alternating diagrams have definite Goeritz matrices, almost--alternating diagrams have precisely 
one definite Goeritz, and one can easily check that the other Goeritz of the almost-alternating $K$ is indefinite.
\end{rem} 

\begin{lemma}\label{l:negdef}
The lattice $\La_\Ga$ is negative definite for any $t, x_i, y_i,\geq 1$ with $\sum_i y_i\geq 2$. 
\end{lemma}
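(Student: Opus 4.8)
The plan is to exhibit $\La_\Ga$ explicitly as a sublattice of the standard negative definite lattice $\bZ^N$ (with the form $-I$), or equivalently to write down a concrete basis change that diagonalizes the form with all negative entries. First I would observe that the underlying graph $\Ga$ is a cycle: the vertices $v_1,\dots,v_N$ are arranged in a loop, with all weights equal to $-2$ except at $t$ distinguished vertices whose weights are $-x_i-2\le -3$, and all edges have coefficient $\pm 1$. The quadratic form is therefore
\[
q\Bigl(\sum_j a_j v_j\Bigr) = \sum_j (v_j\cdot v_j)\,a_j^2 + 2\sum_{j} \varepsilon_j\, a_j a_{j+1}\pmod{N},
\]
where $\varepsilon_j=1$ except on the single $(-1)^d$--edge. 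The essential point is that a cycle all of whose weights are $\le -2$ and at least one of which is $\le -3$ is negative definite: this is the classical fact that such graphs are exactly the "bad" plumbing cycles that sit just outside the $\widetilde{A}_N$ (affine) borderline.

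The cleanest way to carry this out is by a telescoping/completing-the-square argument. I would pick an index where the weight is $-x_i-2\le -3$, break the cycle there, and process the vertices in order around the loop. Writing the form as a sum of squares: for a chain of $(-2)$'s the standard identity
\[
2a^2 + 2b^2 + 2c^2 + \cdots - 2ab - 2bc - \cdots = a^2 + (a-b)^2 + (b-c)^2 + \cdots
\]
shows a $(-2)$--chain contributes a negative semidefinite piece with one-dimensional radical (spanned by the "constant" vector), exactly as for $\widetilde A$. The extra $-1$ at each of the $t$ heavy vertices, coming from $-x_i-2$ rather than $-2$, supplies precisely the strict negativity: after completing squares along the whole cycle one is left with a leftover term of the shape $-\sum_i x_i\,(\text{linear form})^2$ (plus negative-definite squares), and because the cycle is closed this collection of linear forms does not all vanish simultaneously on a nonzero vector. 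Concretely, I would show that if $q(\sum a_j v_j)=0$ then all consecutive differences $a_j-a_{j+1}$ vanish, forcing all $a_j$ equal, and then the heavy-vertex terms force that common value to be $0$.

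Equivalently, and perhaps more transparently given that Lemma~\ref{l:2foldcover} identifies $\La_\Ga$ with the intersection lattice of $M=-N$ where $N$ is the \emph{positive definite} $4$--manifold of~\cite[\S4]{Li14}, one may simply invoke that construction: $N$ is built as a linear plumbing / sequence of handle attachments and its positive definiteness is part of the cited statement, so $\La_\Ga\cong H_2(M;\bZ)/\Tor$ is negative definite. For $d=0$ the same follows from the remark before the lemma, since the Goeritz form of an alternating diagram is definite. I would present the direct lattice computation as the main argument since it is self-contained and makes the sign condition $\sum_i y_i\ge 2$ (hence $N\ge t+1$ vertices, so the cycle is genuinely a cycle and not a single doubled edge) transparent. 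The main obstacle I anticipate is purely bookkeeping: organizing the completion of squares around a cycle with $t$ irregular weights and correctly handling the single $(-1)^d$--labelled edge so that the final leftover is manifestly $\le 0$ with equality only at $0$; no conceptual difficulty, but the indexing must be done carefully.
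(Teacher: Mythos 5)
Your main argument --- completing the square around the cycle to write the form as $-\sum(\text{consecutive differences})^2$ plus a leftover of the shape $-\sum_i x_i(\cdot)^2$, then noting that vanishing forces all coefficients equal (up to the sign on the $(-1)^d$--edge) and hence zero --- is exactly the computation the paper performs. The proposal is correct and takes essentially the same route as the paper's proof.
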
 

\begin{proof} 
Choose a `circular' order on the set $V$ of vertices of $\Ga$ and, for each $v\in V$, let $v'$ (respectively $v''$) 
be the vertex coming immediately before (respectively after) $v$. 
Let $\xi = \sum_{v\in V} a_v v \in\La_\Ga$, with $a_v\in\bZ$. Then, 
\[
\xi\cdot\xi = \left(\sum_{v\in V} a_v v \right)\cdot\left(\sum_{w\in V} a_w w\right) = 
\sum_{v\in V} \left(a_v a_{v'} v\cdot v' + a^2_v v\cdot v + a_v a_{v''} v\cdot v''\right). 
\]
Since $\sum_{v\in V} a_v a_{v'} v\cdot v' = \sum_{v\in V} a_v a_{v''} v\cdot v''$ and $v\cdot v\leq -2$ 
for each $v\in\V$, we have 
\begin{multline*}
\xi\cdot\xi = 
\sum_{v\in V} \left(2 a_v a_{v'} v\cdot v' + a^2_v v\cdot v\right) 
\leq \sum_{v\in V} \left(2 a_v a_{v'}v\cdot v' -2a^2_v\right) = \\
\sum_{v\in V} \left(2 a_v a_{v'}v\cdot v' - a^2_v - a^2_{v'}\right) = 
- \sum_{v\in V} (a_v-a_{v'}v\cdot v')^2\leq 0. 
\end{multline*}
Therefore, $\xi\cdot\xi=0$ implies $a_v=a_{v'}v\cdot v'$ for each $v\in V$. In particular, 
$a_v^2 = a_{v'}^2$ for each $v\in\V$. 
If we call $a^2$ this common value of the $a^2_v$'s, we  have 
\[
0 = \xi\cdot\xi = \sum_{v\in V} \left(2 a_v a_{v'} v\cdot v' + a^2_v v\cdot v\right) = 
\sum_{v\in V} \left(2 a^2_{v'} + a^2_v v\cdot v\right) = a^2 \sum_{v\in V} (v\cdot v + 2). 
\]
Since $\sum_{v\in V} (v\cdot v + 2) = -\sum_i x_i < 0$, this implies $a=0$, hence $a_v=0$ for 
each $v\in V$ and $\xi=0$.
\end{proof} 

\begin{prop}\label{p:embed}
Let $K\subset S^3$ be a knot of concordance order $k$ 
which is the closure of a 3--braid of the form given by Equation~\eqref{e:prelim}, 
with $d\in\{0,1\}$ and $\sum_{i=1}^t (x_i-y_i) = -4d$. Let $\Ga$ denote the 
weighted graph of Figure~\ref{f:graph} determined by the given values 
of $d$, $x_i$ and $y_i$. Then, the orthogonal sum $\La^k_\Ga$ of $k$ copies of $\La_\Ga$ 
embeds isometrically in the standard negative definite lattice $\bZ^N$, where $N=k\sum_i y_i$. 
Moreover, $\La^k$ has finite and odd index as a sublattice of $\bZ^N$. 
\end{prop}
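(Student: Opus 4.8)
The plan is to deduce Proposition~\ref{p:embed} by combining Lemma~\ref{l:2foldcover}, Lemma~\ref{l:negdef}, and Donaldson's Theorem~A with the standard surgery/cobordism argument for knots of finite concordance order. First I would recall that if $K$ has concordance order $k$, then $kK = K\#\cdots\#K$ is smoothly slice, so the $2$--fold branched cover $Y_{kK}$ bounds a smooth, compact, oriented $4$--manifold $W$ with $H_1(W;\bZ)$ torsion (obtained as the $2$--fold branched cover of $B^4$ along a slice disk for $kK$); in fact $b_2(W)=0$ since the branched cover of $B^4$ along a disk is a rational homology ball. Next, since the branched cover functor is additive under connected sum, $Y_{kK} = \#^k Y_K$, and $Y_K$ is exactly the manifold appearing in Lemma~\ref{l:2foldcover}: there is a smooth oriented $M$ with $\partial M = -Y_K$ and intersection lattice isometric to $\La_\Ga$. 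Gluing $k$ copies of $M$ to $W$ along the $k$ boundary components $\#^k Y_K$ produces a smooth closed oriented $4$--manifold $Z$ with intersection form $\La_\Ga^k \oplus (\text{form of }W)$; since $b_2(W)=0$, the form of $Z$ is isometric to $\La_\Ga^k$, which by Lemma~\ref{l:negdef} is negative definite of rank $N=k\sum_i y_i$.

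Then I would apply Donaldson's Theorem~A~\cite{Do87}: a smooth closed oriented $4$--manifold with negative definite intersection form has form isometric to the standard diagonal lattice $\langle -1\rangle^N = \bZ^N$ (with the negative definite convention used here). Hence $\La_\Ga^k$, being the intersection lattice of $Z$, embeds isometrically into $\bZ^N$ — indeed equals it after a change of basis, but the honest statement is that the sublattice $H_2(Z;\bZ)/\Tor$ in its own diagonalization contains $\La_\Ga^k$ as a finite--index (possibly improper, but here finite--index) sublattice because $\La_\Ga^k$ is nondegenerate of full rank $N$. Care is needed here: strictly speaking one gets an isometric \emph{embedding} $\La_\Ga^k \hookrightarrow \bZ^N$ of full rank, and full rank plus nondegeneracy of both lattices forces the index $[\bZ^N : \La_\Ga^k]$ to be finite, equal to $\sqrt{|\det \La_\Ga^k|} = |\det\La_\Ga|^{k/2}$ up to the appropriate interpretation via the discriminant.

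For the parity claim, I would compute $\det\La_\Ga$ modulo squares, or rather argue directly that the index $[\bZ^N:\La_\Ga^k]$ is odd. The cleanest route is: $[\bZ^N:\La_\Ga^k] = [\bZ^N : \La_\Ga^k]$, and since $\La_\Ga^k = \La_\Ga\oplus\cdots\oplus\La_\Ga$ sits inside $\bZ^N = \bZ^{n}\oplus\cdots\oplus\bZ^{n}$ (with $n=\sum_i y_i$) compatibly with the orthogonal splitting — which one needs to justify, or else one works with the discriminant group $\bZ^N/\La_\Ga^k$ whose order is $|\det\La_\Ga|^k / (\text{something})$ — the index is a power of $[\bZ^n : \La_\Ga]$, hence odd as soon as $[\bZ^n:\La_\Ga]$ is odd. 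So it suffices to show $|\det\La_\Ga|$ is odd, equivalently that $\det\La_\Ga$ is odd. One computes $\det\La_\Ga$ from the graph in Figure~\ref{f:graph}: expanding the circular ($\tilde A$--type) determinant along the chains of $-2$'s using the recursion $\det(-2,\ldots,-2)$ of length $\ell$ equals $(-1)^\ell(\ell+1)$, one obtains a closed form in the $x_i$ and $y_i$; reducing mod $2$ and using $\sum_i(x_i-y_i)=-4d$ together with $d\in\{0,1\}$ should pin the parity down to odd. I would also recall the standard identity for the determinant of a cyclic plumbing (continued--fraction / circular determinant formula) to streamline this.

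I expect the main obstacle to be the index and parity bookkeeping in the last paragraph rather than the topological input, which is routine. The subtle point is that Donaldson's theorem gives $\La_\Ga^k \subset \bZ^N$ as a full--rank sublattice but does \emph{not} a priori respect the orthogonal decomposition $\La_\Ga^k = \bigoplus^k \La_\Ga$; so to reduce the parity of $[\bZ^N:\La_\Ga^k]$ to that of $\det\La_\Ga$ one must argue at the level of determinants, $[\bZ^N:\La_\Ga^k]^2 = |\det(\La_\Ga^k)| = |\det\La_\Ga|^k$, so the index is odd iff $|\det\La_\Ga|$ is odd (a square of an even number being even, and an odd square being odd). Thus everything comes down to the single computation that $\det\La_\Ga$ is odd, which I would carry out via the circular--plumbing determinant formula and the constraint $\sum_i(x_i-y_i)=-4d$.
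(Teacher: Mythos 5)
Your topological setup is essentially the paper's: take the branched double cover of $B^4$ along a slice disk for $kK$ (a rational homology ball, by \cite[Lemma~2]{CG86}), cap it off with $k$ copies of the manifold $M$ of Lemma~\ref{l:2foldcover} (the paper forms the boundary connected sum $\natural^k M$, whose boundary is the connected manifold $\#^k Y_K$ --- your phrase ``along the $k$ boundary components'' is off, since $\#^k Y_K$ is connected, but the fix is routine), and apply Donaldson's Theorem~A together with Lemma~\ref{l:negdef}. That part is fine and matches the paper.

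The genuine gap is in your parity argument. You reduce correctly to showing $|\det\La_\Ga|$ is odd, but you then propose to establish this by a mod~$2$ computation of the circular determinant ``using $\sum_i(x_i-y_i)=-4d$ together with $d\in\{0,1\}$.'' That constraint alone cannot pin down the parity. Take $d=0$, $t=1$, $x_1=y_1=2$: the constraint is satisfied, the graph is a $2$--cycle with weights $(-4,-2)$ and a double edge, and the determinant is $8-4=4$, which is even. The reason this does not contradict the proposition is that the closure of $\si_1^2\si_2^{-2}$ is a $3$--component link, not a knot. The indispensable input --- and the one the paper actually uses --- is that $K$ is a \emph{knot}, so that $|\det\La_\Ga|$, being (up to sign) a Goeritz determinant of $K$, equals $\det(K)=|\Delta_K(-1)|$, which is odd for every knot. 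Your determinant recursion would have to be supplemented by the mod~$2$ condition on the $x_i,y_i$ expressing that the braid permutation is a $3$--cycle; without that, the computation cannot close. So the parity claim is true and your reduction of it to $\det\La_\Ga$ is sound, but the final step as you describe it would fail.
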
 

\begin{proof} 
Since $K=\hat\be$ has concordance order $k$, $K\#\stackrel{(k)}{\cdots}\# K$ bounds a properly 
and smoothly embedded disk $D\subset B^4$. The 2--fold cover $N_D\to B^4$ branched along $D$ 
is a rational homology ball~\cite[Lemma~2]{CG86} and $\del N_D = Y_K\#\stackrel{(k)}{\cdots}\# Y_K$. 
Let $M^k:=\natural^k M$ be the boundary--connect sum of $k$ copies of the 4--manifold $M$ of 
Lemma~\ref{l:2foldcover}. 
Then, $M^k$ has intersection lattice isomorphic to $\La^k_\Ga:= \La_\Ga\perp\stackrel{(k)}{\cdots}\perp\La_\Ga$. In view of Lemma~\ref{l:negdef}  
the smooth, closed, 4--manifold $X = N_D\cup M^k$ has negative definite intersection form, and 
by Donaldson's `Theorem A'~\cite[Theorem~1]{Do87} the intersection lattice $H_2(X;\bZ)/\Tor$ is isometric to the standard negative lattice $\bZ^N$, where 
$N=b_2(X)=b_2(M^k)=k\sum_i y_i$. 
This gives an isometric embedding $\La^k_\Ga\subset\bZ^N$. 
The fact that $\La^k_\Ga$ has finite and odd index follows, via standard arguments, from the fact that, 
since $K$ is a knot, the determinant of $K$ is odd. 
\end{proof}

\section{Lattice analysis}\label{s:latan}

\subsection{Circular subsets}\label{ss:prelim}

The standard negative lattice $(\bZ^N,-I)$ admits a basis 
$\E=\{e_1,\ldots, e_N\}$ such that the intersection product between $e_i$ and $e_j$ is 
$e_i\cdot e_j = -\de_{ij}$. Such a basis is unique up to permutations 
and sign reversals of its elements. From now on, we shall call any such basis $\E\subset\bZ^N$ 
a {\em canonical basis}, and denote the standard negative lattice simply $\bZ^N$. 
Given a subset $\V\subset\bZ^N$, we shall denote by $|\V|$ the cardinality of $\V$, and  
by $\La_\V$ the intersection lattice consisting of the subgroup of $\bZ^N$ generated by $\V$, 
endowed with the restriction of the intersection form on $\bZ^N$. 
Elements of the set $\V$ will be called indifferently {\em elements} or {\em vectors}. 
We will say that a vector $u\in\bZ^N$ {\em hits} a vector $v\in\bZ^N$ if $u\cdot v\neq 0$. 
On a finite subset $\V\subset\bZ^N$ 
is defined the equivalence relation $R$ generated by the reflexive and symmetric relation
given by $u\sim_Rv$ if and only if $u\cdot v\neq 0$. We call {\em connected components} 
of $\V$ its $R$--equivalence classes, and we call {\em connected} a subset consisting of 
a single $R$--equivalence class. 

\begin{defn}\label{d:circular}
A (not necessarily connected) subset $\V\subset\bZ^N$ is a {\em circular subset} if: 
\begin{itemize}
\item 
$|C|\geq 3$ for each connected component $C\subset\V$;
\item
$v\cdot w\in\{-1,0,1\}$ for any $v,w\in\V$ with $v\neq w$;
\item
for each $v\in\V$, the set $\{u\in\V\ |\ |u\cdot v|=1\}$ has two elements.
\end{itemize}
\end{defn}
Given a circular subset $\V\subset\bZ^N$, the vector $W:=\sum_{v\in\V} v \in \La_\V$ 
will be called the {\em Wu element} of $\V$. For any subset $\V\subset\bZ^N$, a canonical basis 
$\E\subset\bZ^N$ such that $\sum_{v\in\V} v = -\sum_{e\in\E} e$ will be called {\em adapted to $\V$}.

\begin{lemma}\label{l:wuelement} 
Let $\V\subset\bZ^N$ be a circular subset such that $\La_\V\subset\bZ^N$ has finite and odd index and  
whose Wu element $W$ satisfies $W\cdot W = -N$. Then, there is a canonical basis $\E\subset\bZ^N$ 
adapted to $\V$. 
In particular, for each $e\in\E$ we have
\begin{equation}\label{e:sumofcoeff}
\sum_{v\in\V} v\cdot e =  1. 
\end{equation}
\end{lemma}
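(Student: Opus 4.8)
The plan is to exploit the characterization of Wu elements (characteristic vectors) in the standard negative definite lattice. Recall that in $(\bZ^N, -I)$ a vector $W$ is \emph{characteristic} if $W\cdot x \equiv x\cdot x \pmod 2$ for all $x\in\bZ^N$; in a canonical basis $\E=\{e_1,\dots,e_N\}$ this means all coordinates of $W$ are odd. The classical fact is that a characteristic vector $W$ with $W\cdot W = -N$ must be, up to sign and permutation of the $e_i$, exactly $-\sum_{e\in\E} e$ (i.e.\ all coordinates equal to $\pm 1$): indeed if $W=\sum a_i e_i$ with all $a_i$ odd, then $-\sum a_i^2 = W\cdot W = -N$ forces $a_i^2=1$ for all $i$, hence $a_i=\pm 1$; absorbing signs into a re-choice of canonical basis gives $W=-\sum_{e\in\E}e$. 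So the entire content of the lemma is: \emph{the Wu element $W=\sum_{v\in\V}v$ is a characteristic vector of $\bZ^N$.}

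First I would verify that $W$ is characteristic. This is where the hypotheses ``circular subset'' and ``$\La_\V$ has finite and odd index'' enter. Since $\La_\V$ has finite index, $\La_\V\otimes\mathbb Q = \bZ^N\otimes\mathbb Q$, so it suffices to check $W\cdot x\equiv x\cdot x\pmod 2$ for $x$ ranging over the generating set $\V$ itself — but one must be careful, because $W\cdot v\equiv v\cdot v \pmod 2$ for all $v\in\V$ does \emph{not} immediately give it for all $x\in\bZ^N$ unless $\La_\V=\bZ^N$. The correct route: the map $x\mapsto W\cdot x - x\cdot x \bmod 2$ is a homomorphism $\bZ^N\to\bZ/2$ which vanishes on $\La_\V$ (checked on generators, using the circular-subset axioms: each $v\in\V$ has exactly two neighbours $u$ with $v\cdot u=\pm1$, so $W\cdot v = v\cdot v + (\text{sum of the two }\pm1\text{'s})$, and one needs this correction term to be even). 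Then since $\bZ^N/\La_\V$ has odd order, the only homomorphism $\bZ^N/\La_\V\to\bZ/2$ is zero, so the homomorphism vanishes on all of $\bZ^N$: $W$ is characteristic.

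The step I expect to be the real crux is showing that for each $v\in\V$ the correction term $\sum_{u\neq v} v\cdot u$ is even, equivalently $W\cdot v \equiv v\cdot v\pmod 2$. Naively, the circular axiom says exactly two vectors $u\in\V$ satisfy $|v\cdot u|=1$, so $\sum_{u\neq v}v\cdot u \in\{-2,0,2\}$, which is even — so in fact this is immediate and clean. Hence $W\cdot v - v\cdot v$ is even for every $v\in\V$, as needed. So actually the only genuinely substantive input beyond bookkeeping is the odd-index hypothesis, used exactly once to pass from ``characteristic modulo $\La_\V$'' to ``characteristic''. Once $W$ is known characteristic with $W\cdot W=-N$, the structure theorem quoted above produces a canonical basis $\E$ with $W=-\sum_{e\in\E}e$, which by definition is adapted to $\V$. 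Finally, for each $e\in\E$, pairing both sides of $\sum_{v\in\V}v = W = -\sum_{e'\in\E}e'$ with $e$ gives $\sum_{v\in\V} v\cdot e = -(-1) = 1$, which is Equation~\eqref{e:sumofcoeff}. I would present it in this order: (i) recall the characteristic-vector structure lemma for $\bZ^N$; (ii) show $W$ is characteristic using the two-neighbour axiom plus the odd-index hypothesis; (iii) apply (i) to get $\E$; (iv) pair with $e$ to get \eqref{e:sumofcoeff}.
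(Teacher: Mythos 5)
Your proposal is correct and follows essentially the same route as the paper: show $W$ is characteristic in $\La_\V$ via the two-neighbour axiom, use the odd index to upgrade this to $\bZ^N$ (the paper phrases this via an odd multiple $dv\in\La_\V$, you via a $\bZ/2$-valued homomorphism killing the odd-order quotient — the same argument), and then use $W\cdot W=-N$ to force all coordinates to be $\pm 1$. No substantive differences.
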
 

\begin{proof} 
Since the index of $\La_\V$ is finite and odd, given $v\in\bZ^N$ there exists an odd integer $d$ 
such that $dv\in\La_\V$. It is easy to check that $W\in\La_\V$ is characteristic in $\La_\V$, therefore $W\cdot (dv)$ and $(dv)\cdot (dv)$ are congruent modulo $2$. But, since $d$ is odd, the first number is congruent to $W\cdot v$ and the second one to $v\cdot v$. This shows that $W$ is a characteristic vector when viewed in $\bZ^N$. In particular, 
given any canonical basis $\E\subset\bZ^N$ we have $W\cdot e \neq 0$ for each $e\in\E$. Since $W\cdot W= -N$, 
we have $W\cdot e = \pm 1$ for each $e$, and up to reversing the signs of some 
of the $e$'s we have $W=-\sum_{e\in\E} e$. Equation~\eqref{e:sumofcoeff} follows immediately 
using the definition of $W$. 
\end{proof} 

\begin{lemma}\label{l:coefficients}
Let $\V\subset\bZ^N$ be a circular subset and $\E\subset\bZ^N$ a canonical basis adapted to $\V$. 
Then, for every $v\in\V$ and $e\in\E$ we have $v\cdot e \in\{-1,0,1,2\}$, and there is a map
\[ 
\W:=\{v\in\V\ |\ W\cdot v = v\cdot v+2\}\to\E 
\]
defined by sending each $v\in\W$ to the unique $e_v\in\E$ such that 
$v\cdot e_v\in\{-1,2\}$.
\end{lemma}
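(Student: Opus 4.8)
The plan is to prove the two assertions of Lemma~\ref{l:coefficients} in the order they appear: first the bound $v\cdot e\in\{-1,0,1,2\}$ for all $v\in\V$, $e\in\E$, and then the existence and well-definedness of the map $\W\to\E$. For the first part, I would fix $v\in\V$ and expand it in the canonical basis as $v=\sum_{e\in\E} (v\cdot e)^{*} e$ — more precisely, since $e\cdot e=-1$, the coefficient of $e$ in $v$ is $-(v\cdot e)$, so $v\cdot v = -\sum_{e\in\E}(v\cdot e)^2$. Because $\V$ is a circular subset, each $v$ hits exactly two other elements of $\V$ (each with product $\pm1$) and $v\cdot v$ is the weight of the corresponding vertex; in the lattice $\La_\Ga$ every vertex has weight $\leq -2$ and those of weight exactly $-2$ are the endpoints of the $(-1)^d$-edge or interior chain vertices. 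The key numerical input is Equation~\eqref{e:sumofcoeff} from Lemma~\ref{l:wuelement}: $\sum_{v\in\V} v\cdot e = 1$ for every $e\in\E$. Combined with $\sum_{v\in\V}(v\cdot e)^2 \le \sum_{v\in\V}|v\cdot e| $ being controlled, one sees that the coefficients $v\cdot e$ cannot be too large.

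More concretely, I would argue as follows. Suppose some $v\in\V$ had $|v\cdot e|\geq 3$ for some $e$, or $v\cdot e = -2$, or more generally that the multiset $\{v\cdot e : v\in\V\}$ for a fixed $e$ is incompatible with $\sum_v v\cdot e = 1$. Fix $e\in\E$ and consider $S_e := \{v\in\V : v\cdot e\neq 0\}$. For any two distinct $v,w\in S_e$, write $v = -\,(v\cdot e)\,e + v^{\perp}$ and similarly for $w$, where $v^\perp\perp e$; then $v\cdot w = -(v\cdot e)(w\cdot e) + v^\perp\cdot w^\perp$, and since $|v\cdot w|\leq 1$ we get $|v^\perp\cdot w^\perp - (v\cdot e)(w\cdot e)|\le 1$. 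The real leverage comes from looking at $\left(\sum_{v\in S_e} v\right)\cdot\left(\sum_{v\in S_e}v\right)$ versus the $e$-component of $\sum_v v = W$, which by the adaptedness of $\E$ equals $-\sum_{e'} e'$, so its $e$-coefficient is $1$, matching \eqref{e:sumofcoeff}. Negative definiteness of $\La_\V$ (inherited from $\bZ^N$) forces $\sum_{v\in S_e}(v\cdot e)^2 \le$ a bound coming from $v\cdot v\le -2$, and a short case analysis on the possible values $\{-1,1,2\}$ versus $-2,\pm1$ and the constraint that the total is $1$ pins the coefficients to $\{-1,0,1,2\}$; in particular $v\cdot e = -2$ is excluded for the same reason $v\cdot e$ cannot be $\le -2$ or $\ge 3$. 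I expect this case analysis, keeping track of which products are $\pm1$ and reconciling them with $v\cdot v\le -2$, to be the main obstacle — it is the ``bulk of the technical work'' the introduction warns about — but it is elementary linear algebra over $\bZ$.

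For the second assertion, given $v\in\W$, i.e.\ $W\cdot v = v\cdot v + 2$, I would rewrite this using $W = -\sum_{e\in\E}e$: then $W\cdot v = -\sum_e (e\cdot v)\cdot(-1)$... more carefully, $W\cdot v = \sum_{e}( -e)\cdot v$? Let me instead use $W\cdot v = \sum_{e\in\E}(W\cdot e)(\text{coeff of }e\text{ in }v)/(e\cdot e)$-type bookkeeping; cleanly, since $W = -\sum_e e$ and $v\cdot v = -\sum_e (v\cdot e)^2$, the condition $W\cdot v = v\cdot v+2$ becomes $-\sum_{e}( -(v\cdot e)) = $ wait — $W\cdot v = \bigl(-\sum_e e\bigr)\cdot v = -\sum_e (e\cdot v)$. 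So the defining equation of $\W$ reads $-\sum_{e\in\E} v\cdot e = -\sum_{e\in\E}(v\cdot e)^2 + 2$, i.e. $\sum_{e\in\E}\bigl((v\cdot e)^2 - v\cdot e\bigr) = 2$, that is $\sum_{e\in\E}(v\cdot e)(v\cdot e - 1) = 2$. Now by the first part each $v\cdot e\in\{-1,0,1,2\}$, and $(v\cdot e)(v\cdot e-1)$ equals $2$ when $v\cdot e = -1$, equals $0$ when $v\cdot e\in\{0,1\}$, and equals $2$ when $v\cdot e = 2$. Hence the sum equals $2\cdot\#\{e : v\cdot e\in\{-1,2\}\}$, so this count is exactly $1$: there is a unique $e_v\in\E$ with $v\cdot e_v\in\{-1,2\}$, and all other coefficients lie in $\{0,1\}$. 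This defines the map $v\mapsto e_v$ and completes the proof. The only subtlety is making sure the first part is applied correctly so that the quadratic $x(x-1)$ is nonnegative on the allowed set and vanishes precisely on $\{0,1\}$ — which it does — so no further work is needed.
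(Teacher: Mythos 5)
Your treatment of the second assertion is correct and is exactly the paper's argument: from $W=-\sum_{e\in\E}e$ and $v\cdot v=-\sum_{e\in\E}(v\cdot e)^2$ you get $\sum_{e\in\E}(v\cdot e)(v\cdot e-1)=W\cdot v-v\cdot v=2$ for $v\in\W$, and since each summand $x(x-1)$ is a nonnegative even integer, exactly one $e$ has $v\cdot e\in\{-1,2\}$. But your proof of the first assertion --- the bound $v\cdot e\in\{-1,0,1,2\}$ for \emph{every} $v\in\V$ --- has a genuine gap. The route you sketch (fix $e$, study $S_e=\{v\in\V: v\cdot e\neq 0\}$, invoke negative definiteness and $v\cdot v\leq -2$) is never carried out: the column constraint $\sum_{v\in\V}v\cdot e=1$ does not by itself exclude large coefficients of opposite signs, and you explicitly defer the ``short case analysis'' that would be the actual content of the proof. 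Moreover, the hypothesis $v\cdot v\leq -2$ is not part of Definition~\ref{d:circular} and is not assumed in the lemma; the lemma must apply to circular subsets containing $(-1)$--vectors (such as those produced in the course of the proof of Proposition~\ref{p:semipos}), so an argument leaning on $v\cdot v\leq-2$ would prove a weaker statement than the one needed.

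The fix is already contained in your own computation for the second part. The identity
$\sum_{e\in\E}(v\cdot e)(v\cdot e-1)=W\cdot v-v\cdot v=v\cdot(W-v)=\sum_{u\in\V\setminus\{v\}}u\cdot v$
holds for \emph{every} $v\in\V$, and by the second and third conditions of Definition~\ref{d:circular} the right--hand side is a sum of exactly two terms each equal to $\pm1$, hence lies in $\{-2,0,2\}$. Since every summand $x(x-1)$ with $x\in\bZ$ is nonnegative, the whole sum is $0$ or $2$, and in particular every single summand satisfies $(v\cdot e)(v\cdot e-1)\leq 2$, which for integers forces $v\cdot e\in\{-1,0,1,2\}$. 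This one--line row argument (fixing $v$ and summing over $e\in\E$) replaces your unproven column argument and is what the paper does; it also shows that for $v\notin\W$ the sum is $0$, so that all coefficients of such $v$ lie in $\{0,1\}$, a fact used repeatedly later.
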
 

\begin{proof} 
We can write each $v\in\V$ as a linear combination 
\[
v = -\sum_{e\in\E} (v\cdot e) e.
\]
For each $v\in\V$ we have the equality 
\[
-v\cdot v + v\cdot W = v\cdot (-v+W) = \sum_{e\in\E} v\cdot e(v\cdot e-1).
\]
By the second and third conditions in Definition~\ref{d:circular}, we have 
\[
-2 \leq -v\cdot v + v\cdot W \leq 2
\]
for each $v\in\V$.  The quantity $x(x-1)$ is always nonnegative when $x\in\bZ$, so 
we conclude 
\begin{equation}\label{e:coefficients}
\sum_{e\in\E} v\cdot e (v\cdot e - 1) =
\begin{cases} 
2\quad\text{if}\ v\in\W\\
0\quad\text{if}\ v\not\in\W.
\end{cases}
\end{equation}
By the second and third condition in Definition~\ref{d:circular}, 
Equation~\eqref{e:coefficients} implies $v\cdot e\in\{-1,0,1,2\}$ for every 
$e$. Moreover, if $v\in\W$ there is exactly one $e_v\in\E$ such that $v\cdot e_v\in\{-1,2\}$. 
This gives the statement and concludes the proof.
\end{proof}

\subsection{Semipositive circular subsets}\label{ss:semiposcs}

A circular subset $\V\subset\bZ^{N}$ will be called {\em semipositive} if each connected component 
$C\subset\V$ contains a single pair of vectors $u, v\in C$ such that $u\cdot v=-1$.

\begin{lemma}\label{l:semipcs-u}
Let $\V\subset\bZ^{|\V|}$ be a semipositive circular subset such that $v\cdot v\leq -2$ for each $v\in\V$ and 
$\sum_{v\in C}(v\cdot v+2)<-1$ for each connnected component $C\subset\V$. Let $\E$ be 
a canonical basis adapted to $\V$. Then, for each $e\in\E$ not in the image of the map 
of Lemma~\ref{l:coefficients}, there exists $u\in\V$ such that $u\cdot u=-2$, $u\cdot e=1$ 
and $u$ is the only vector of $\V$ which hits $e$. 
\end{lemma}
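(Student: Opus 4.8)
The plan is to first single out the vector $u$, and then to prove $u\cdot u=-2$ by a rank count.

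\emph{Identifying $u$.} Fix $e\in\E$ not in the image of the map $f\co\W\to\E$ of Lemma~\ref{l:coefficients}. I would begin by checking that $v\cdot e\in\{0,1\}$ for every $v\in\V$. If $v\notin\W$ this follows from Equation~\eqref{e:coefficients}: the nonnegative integers $(v\cdot e')(v\cdot e'-1)$ sum to $0$ over $e'\in\E$, so each of them vanishes, and in particular $v\cdot e\in\{0,1\}$. If $v\in\W$, then $v$ takes a value in $\{-1,2\}$ only on $e_v=f(v)$, which differs from $e$ since $e$ is not in the image of $f$, so again $v\cdot e\in\{0,1\}$ by Lemma~\ref{l:coefficients}. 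Since $\E$ is adapted to $\V$ we have $\sum_{v\in\V}v=-\sum_{e'\in\E}e'$, hence $\sum_{v\in\V}v\cdot e=1$ (Equation~\eqref{e:sumofcoeff}). Therefore there is exactly one $v\in\V$ with $v\cdot e=1$, every other $v$ satisfying $v\cdot e=0$; let $u$ be this vector. Then $u$ is the only element of $\V$ that hits $e$, and $u\cdot e=1$, so it remains only to show $u\cdot u=-2$.

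\emph{The self-intersection of $u$.} Suppose, for a contradiction, that $u\cdot u\leq-3$, and set $u_1:=u+e$. Then $u_1\cdot e=u\cdot e+e\cdot e=0$, so $u_1$ lies in the orthogonal complement $e^{\perp}\subset\bZ^{|\V|}$ of $e$, a lattice of rank $|\V|-1$; moreover $u_1\cdot v=u\cdot v$ for every $v\in\V\setminus\{u\}$ (such $v$ do not hit $e$), and $u_1\cdot u_1=u\cdot u+2(u\cdot e)+e\cdot e=u\cdot u+1\leq-2$. Also $u_1\neq v$ for $v\in\V\setminus\{u\}$, because $u_1=v$ would force $u\cdot v=u_1\cdot v=u\cdot u+1\leq-2$, impossible by Definition~\ref{d:circular}. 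I would then consider
\[
\V_1:=(\V\setminus\{u\})\cup\{u_1\}\ \subset\ e^{\perp},
\]
which has $|\V|$ elements and lies in a lattice of rank $|\V|-1$. The crucial point is that $\V_1$ is again a disjoint union of ``cycles'' whose edge--products multiply to $-1$ around each: removing $u$ from its component $C$ (a cycle with $|C|\geq3$ vertices) leaves a path joining the two neighbours $a,b$ of $u$, and since $u_1\cdot a=u\cdot a$, $u_1\cdot b=u\cdot b$ and $u_1\cdot v=0$ for the remaining $v\in C$, adjoining $u_1$ recreates a cycle on $|C|$ vertices with the same edge--products as $C$ (in particular with product $-1$, by semipositivity of $\V$), the other components being untouched. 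As $v\cdot v\leq-2$ for every $v\in\V_1$, the computation in the proof of Lemma~\ref{l:negdef} applies to each component $C_1$ of $\V_1$: from $\xi\cdot\xi=0$ one gets $a_v=(v\cdot v')a_{v'}$ around the cycle, and multiplying these relations around it — using that the edge--products multiply to $-1$ — gives $2a_v=0$, so each $\La_{C_1}$ is negative definite, hence of rank $|C_1|$. Thus $\La_{\V_1}$, whose Gram matrix is block--diagonal with negative definite blocks, has rank $\sum_{C_1}|C_1|=|\V|$, contradicting $\La_{\V_1}\subseteq e^{\perp}$. Hence $u\cdot u=-2$. (Incidentally, this argument does not use the hypothesis $\sum_{v\in C}(v\cdot v+2)<-1$.)

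The step I expect to be the main obstacle is recognising the right reduction. The obvious move — simply deleting $u$ — turns the cycle $C$ into a path and destroys the circular structure; the point is to delete $u$ but add back $u_1=u+e$, which simultaneously restores the cycle (still with edge--product $-1$), keeps the number of vectors equal to $|\V|$, and drops the ambient rank to $|\V|-1$, producing the contradiction. Verifying carefully that $\V_1$ has the claimed connected--component and cycle structure, and that no collision $u_1\in\V$ occurs, is routine; the hypotheses $v\cdot v\leq-2$ and semipositivity of $\V$ are used, respectively, to make the Gram blocks of $\La_{\V_1}$ negative definite and to guarantee that the edge--products around each component of $\V_1$ multiply to $-1$.
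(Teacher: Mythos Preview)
Your argument is correct and follows essentially the same route as the paper: identify the unique $u$ hitting $e$ via Equation~\eqref{e:sumofcoeff}, replace $u$ by $u+e$ to obtain a subset of $e^\perp$ with the same cycle structure, and derive a rank contradiction from negative definiteness.

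The one genuine difference lies in how you establish negative definiteness of $\La_{\V_1}$. The paper observes that, provided $(u+e)\cdot(u+e)\leq -2$, the new subset $\V'$ is again isometric to some $\La_\Ga$ with $t,x_i,y_i\geq 1$, and then invokes Lemma~\ref{l:negdef} directly; but to guarantee $x_i\geq 1$ (i.e.\ that some vertex of the modified component still has square $\leq -3$) it needs the hypothesis $\sum_{v\in C}(v\cdot v+2)<-1$, which becomes $\sum_{v\in C'}(v\cdot v+2)<0$ after the replacement. You instead rerun the computation of Lemma~\ref{l:negdef} and close it with the observation that the edge--products around each cycle of $\V_1$ multiply to $-1$ (inherited from semipositivity), so $a_v=-a_v$ and hence $a_v=0$. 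This is a neat variant: it bypasses the need to match $\V_1$ to a $\La_\Ga$ and, as you note, makes the hypothesis $\sum_{v\in C}(v\cdot v+2)<-1$ superfluous for this lemma. The paper's version has the virtue of citing Lemma~\ref{l:negdef} as a black box; yours buys a slightly sharper statement.
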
 

\begin{proof}
Let $e\in\E$ be an element not in the image of the map of Lemma~\ref{l:coefficients}. 
Then, we have $v\cdot e\in\{0,1\}$ for each $v\in\V$. In view of Equation~\eqref{e:sumofcoeff}  
there exists a unique $u\in\V$ such that $u\cdot e=1$. The subset $\V'=\V\setminus\{u\}\cup\{u+e\}$  
is semipositive circular, contained in the span of $\E\setminus\{e\}$, and satisfies 
$\sum_{v\in C}(v\cdot v+2)<0$ for each connected component $C\subset\V'$. 
If $v\cdot v\leq -2$ for each $v\in\V'$, then $\La_{\V'}$ is isometric to $\La_\Ga$ 
for some $t, x_i, y_i\geq 1$. By  Lemma~\ref{l:negdef} this would imply 
$\rk(\La_{\V'}) = |\V'|$. Since $\V'$ can be viewed as a subset of $\bZ^{|\V|-1}$ and  
$|\V'| = |\V|$, we must have $(u+e)\cdot (u+e) = -1$,  
which implies $u\cdot u=-2$. 
\end{proof} 

\begin{lemma}\label{l:(-1)-contr}
Let $\V\subset\bZ^N$ be a semipositive circular subset, and let $\E\subset\bZ^N$ 
be a canonical basis adapted to $\V$. Let $u\in\V$ with $u\cdot u=-1$ and suppose that the connected 
component containing $u$ has cardinality at least $4$. Let $u', u''\in\V$ be the two distinct vectors 
such that $|u'\cdot u|=|u''\cdot u| = 1$. Then, the subset 
\[
\V' = \V\setminus\{u\}\cup\{u'+(u'\cdot u) u, u'' + (u''\cdot u) u\}
\]
is a semipositive, circular subset contained in the span of $\E' = \E\setminus\{e\}$, 
where $e\in\E$ is the only element such that $e\in\{u, -u\}$, and $\E'$ is adapted to $\V'$.
\end{lemma}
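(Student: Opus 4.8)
The plan is to verify the three defining properties of a circular subset for $\V'$ directly, then check that $\V'$ lies in the span of $\E'$ and that $\E'$ is adapted to it. Write $e\in\{u,-u\}$ for the unique canonical basis element proportional to $u$; since $u\cdot u=-1$, indeed $u=\pm e$. The key point is that the operation we are performing is a ``slide'' of $u$ off the component: we replace $u$ by the two modified vectors $u'+(u'\cdot u)u$ and $u''+(u''\cdot u)u$, which intuitively amounts to contracting the vertex $u$ in the underlying cycle and splicing its two neighbours $u',u''$ together. First I would record the elementary identities: if $w:=u'+(u'\cdot u)u$ then $w\cdot w = u'\cdot u' + 2(u'\cdot u)^2 + (u'\cdot u)^2 u\cdot u = u'\cdot u' + 2(u'\cdot u)^2 - (u'\cdot u)^2 = u'\cdot u' + (u'\cdot u)^2 = u'\cdot u'+1$, and similarly for $u''$; also $w\cdot u'' = u'\cdot u'' + (u'\cdot u)(u''\cdot u)$, $w\cdot v = u'\cdot v$ for any $v\in\V\setminus\{u,u',u''\}$ (since such $v$ does not hit $u$), and $w\cdot e = u'\cdot e + (u'\cdot u)(u\cdot e)=u'\cdot e \pm (u'\cdot u)$, which one checks is $0$ because $u=\pm e$ forces $u'\cdot e = \mp(u'\cdot u)$.

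Next I would run through Definition~\ref{d:circular}. The product condition $v\cdot w\in\{-1,0,1\}$ for $\V'$: the only genuinely new products are $w\cdot u''$ and the analogous one, plus products among the two new vectors and old ones. Because $\V$ is semipositive and the component of $u$ has cardinality $\geq 4$, at most one of the two ``old'' edges at $u$ can be the negative edge; the other pairings along the cycle have product $+1$, and one verifies $w\cdot u''\in\{-1,0,1\}$ case by case using $u'\cdot u,u''\cdot u\in\{-1,1\}$ and $u'\cdot u''\in\{-1,0,1\}$ (the value $0$ occurs exactly when the component has cardinality $\geq 5$, the value $\pm1$ when it has cardinality $4$, in which case $u'$ and $u''$ were already adjacent and the new edge $w\cdot u''$ picks up a contribution — one must check it stays in $\{-1,0,1\}$, which it does because the original cycle had only one negative edge). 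The valence condition — each $v\in\V'$ hits exactly two others — follows because deleting $u$ removes the two edges $uu'$, $uu''$ but the replacement $u'\mapsto w$, $u''\mapsto w''$ creates exactly one new edge $ww''$ (they were each left with one free slot after losing $u$), so valences are preserved; and $|C|\geq 3$ for each component of $\V'$ holds since the $u$-component had cardinality $\geq 4$ and loses one element. Semipositivity is preserved because we introduced at most one new negative product and destroyed the one(s) incident to $u$ — here one has to argue that the new edge $ww''$ is negative if and only if exactly one of the original edges $uu'$, $uu''$ was negative, keeping the count of negative edges in the component at exactly one.

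Finally, $\V'\subset\operatorname{span}(\E')$: every $v\in\V\setminus\{u,u',u''\}$ already has $v\cdot e=0$ (only $u'$, $u''$, $u$ hit $e=\pm u$, by the valence condition applied to $u$ in $\V$), hence lies in $\operatorname{span}(\E')$; and $w=u'+(u'\cdot u)u$ has $w\cdot e=0$ by the computation above, so $w\in\operatorname{span}(\E')$, likewise $w''$. That $\E'$ is adapted to $\V'$ means $\sum_{v\in\V'}v = -\sum_{e'\in\E'}e'$; starting from $\sum_{v\in\V}v=-\sum_{e\in\E}e$ and subtracting, $\sum_{v\in\V'}v - \sum_{v\in\V}v = w + w'' - u' - u'' - u = (u'\cdot u)u + (u''\cdot u)u - u$, and using $\sum_{v\in\V}v\cdot e = 1$ (Lemma~\ref{l:wuelement}, Equation~\eqref{e:sumofcoeff}) together with the fact that only $u,u',u''$ hit $e$, one gets $(u\cdot e)+(u'\cdot e)+(u''\cdot e)=1$; combined with $u'\cdot e=\mp(u'\cdot u)$, $u''\cdot e=\mp(u''\cdot u)$ and $u\cdot e=\pm1$ this shows $(u'\cdot u)+(u''\cdot u)-1 = \mp1 \cdot(\text{coefficient of }e)$, i.e. the difference above equals exactly $\mp e$, which is $-e$ after reversing $e$ appropriately; hence $\sum_{v\in\V'}v = -\sum_{e\in\E}e + e = -\sum_{e'\in\E'}e'$. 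I expect the main obstacle to be the bookkeeping in the cardinality-$4$ case, where $u'$ and $u''$ are already adjacent in $\V$ and the operation produces a length-$3$ cycle $\{w,w'',\text{third vertex}\}$ with possibly a doubled edge between $w$ and $w''$; one has to confirm that the product $w\cdot w''$ still falls in $\{-1,0,1\}$ and that no component drops below size $3$ — this is exactly where the hypothesis ``cardinality at least $4$'' is used, and the verification is a short but careful sign analysis rather than anything deep.
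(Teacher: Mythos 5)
Your verification is correct, and it is essentially the only thing one could do here: the paper gives no argument for this lemma (its proof reads ``an easy exercise left to the reader''), so a direct check of Definition~\ref{d:circular}, of $\V'\subset\operatorname{span}(\E')$ via $w\cdot e=0$, and of adaptedness via Equation~\eqref{e:sumofcoeff} is exactly the intended route. Two small corrections. First, your case analysis for $|C|=4$ rests on a false premise: since each vector in a circular subset has exactly two neighbours, $u'\cdot u''\neq 0$ would force $u'$ and $u''$ to have $u$ and each other as their only neighbours, closing up a $3$--element component and contradicting $|C|\geq 4$; so $u'\cdot u''=0$ always, $w\cdot w''=(u'\cdot u)(u''\cdot u)=\pm 1$ in every case, and the ``doubled edge'' you worry about cannot occur. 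Second, the signs in the adaptedness step are garbled: writing $u=\epsilon e$ with $\epsilon=\pm 1$, Equation~\eqref{e:sumofcoeff} applied to $e$ gives $\epsilon\bigl((u'\cdot u)+(u''\cdot u)-1\bigr)=1$, so the difference $\sum_{v\in\V'}v-\sum_{v\in\V}v=\bigl((u'\cdot u)+(u''\cdot u)-1\bigr)u$ equals $\epsilon u=\epsilon^2 e=+e$ outright (there is no ``reversing $e$'', which is in any case not allowed since $e$ is pinned down by $e\in\{u,-u\}$); your final displayed identity is the correct one, only the intermediate prose is off. Neither point is a genuine gap.
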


\begin{proof} 
The proof is an easy exercise left to the reader. 
\end{proof} 

\begin{prop}\label{p:semipos}
Let $\V\subset\bZ^N$ be a semipositive circular subset such that: 
\begin{itemize}
\item
$\La_\V\subset\bZ^N$ has finite and odd index;
\item
its Wu element $W=\sum_{v\in\V} v$ satisfies $W\cdot W = - N$; 
\item 
$v\cdot v\leq -2$ for each $v\in\V$; 
\item
$\sum_{v\in D}(v\cdot v+2)=4-|D|<-1$ for each connected component $D\subset\V$.
\end{itemize} 
Let $C = \{v_1,\ldots, v_{|C|}\}\subset\V$ be any connected component of $\V$,  
and $(-c_1,\ldots, -c_{|C|})$ the corresponding string of self--intersections. 
Then, there is an iterated blowup $(s_1,\ldots, s_{|C|})$ of $(0,0)$ containing exactly two $1$'s, 
and such that $c_i\geq s_i$ for each $i=1,\ldots, |C|$. 
\end{prop}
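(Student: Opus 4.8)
The plan is to run an induction on $|C|$, at each stage ``peeling off'' one entry of the self-intersection string by a move on $\V$ that either deletes a vector or replaces a vector $v$ by $v+e$ for some $e\in\E$, mirroring how a blowup of $(0,0)$ is inverted (a ``blowdown''). The base case is $|C|$ small: since $\sum_{v\in C}(v\cdot v+2)=4-|C|<-1$ we have $|C|\geq 6$, but after the reduction moves the relevant components will shrink, so the genuine base of the induction is a connected circular configuration with all self-intersections $\leq -2$ and $\sum(v\cdot v+2)$ equal to $-4$ down to the point where a cyclic $(-2,-2,\ldots,-2,-x-2)$-string appears; such a string is $(2,\ldots,2,x+2)$, which dominates the iterated blowup $(1,1)$ of $(0,0)$ entrywise once we check $x\geq 1$, and $(1,1)$ is the unique two-$1$ blowup at the bottom. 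So the combinatorial target is: every reduction step on $\V$ should correspond to a blowdown of the string, and conversely the string we started with is then recovered as an iterated blowup, which automatically has exactly two $1$'s because each blowup introduces exactly one new $1$ and destroys none (inspecting the three cases in the definition of blowup) while the base $(1,1)$ has two.

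The key steps, in order: (i) First I would locate an entry $c_i$ of the string where a blowdown is possible. In the $\La_\Ga$ picture the $(-1)$-vectors do not occur, so I work instead with the canonical basis $\E$ adapted to $\V$ furnished by Lemma~\ref{l:wuelement} (its hypotheses are exactly the first two bullets), and with the map $\W\to\E$ of Lemma~\ref{l:coefficients}. An element $e\in\E$ \emph{not} in the image of that map is, by Lemma~\ref{l:semipcs-u}, hit by a unique $u\in\V$ with $u\cdot u=-2$ and $u\cdot e=1$; replacing $u$ by $u+e$ gives a semipositive circular subset $\V'$ inside the span of $\E\setminus\{e\}$ with $|\V'|=|\V|$ but ambient rank dropped by one, and with the two neighbours of $u$ unchanged in self-intersection — this is precisely the ``delete a $-2$ that sits between two entries'' blowdown, i.e. it inverts the middle case of the blowup definition, or the first/third case when $u$ is adjacent to an $x_j+2$-vertex. (ii) Iterating, I reduce to the situation where every $e\in\E$ \emph{is} in the image of $\W\to\E$; a counting argument with Equation~\eqref{e:sumofcoeff} and $|\W|\leq|\V|$, together with $\sum_{v\in C}(v\cdot v+2)=4-|C|$, then forces the remaining components to be as small as the base case allows, and I check by hand that each such minimal component is a cyclic string of the form $(2,\ldots,2,x_1+2,\ldots)$ dominating an explicit iterated blowup of $(0,0)$ with two $1$'s. (iii) Finally I reassemble: undoing the reduction moves turns the minimal blowups back into blowups of the original string, and since $c_i\geq s_i$ is preserved at every step (a blowdown changes a $2$ to nothing and lowers neighbouring entries by at most the corresponding drop in $s$), the original $(c_1,\ldots,c_{|C|})$ dominates an iterated blowup $(s_1,\ldots,s_{|C|})$ of $(0,0)$; the two-$1$ count is automatic as noted above.

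The main obstacle I anticipate is \emph{step (i)}: guaranteeing that a blowdown is available and that it is ``legal'', i.e. that after replacing $u$ by $u+e$ the result is still a semipositive circular subset with the controlled component sums, and — crucially — that the entrywise domination $c_i\geq s_i$ is not broken. The subtle point is the bookkeeping at the two neighbours of $u$ in $C$: when $u$ is the unique vector hitting $e$ and $u\cdot u=-2$, the vectors $u'+(u'\cdot u)u$ and $u''+(u''\cdot u)u$ must be shown to have the same self-intersections as $u',u''$ (so the string really just loses a single $2$), which uses $u\cdot u=-2$ together with the semipositivity hypothesis controlling the signs $u'\cdot u$, $u''\cdot u$; a parallel but messier case is when $e$ sits at the junction of two components or adjacent to an $x_j+2$-vertex, where one must verify the blowdown lands in one of the three prescribed patterns and not some forbidden configuration. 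I also expect to need Lemma~\ref{l:(-1)-contr} as an auxiliary tool to handle any transient $(-1)$-vectors that the reduction moves might create in intermediate steps, contracting them away before continuing the induction. Once the reduction is set up cleanly and shown to terminate (the ambient rank strictly decreases, or a component's cardinality does), the recovery of the blowup in step (iii) and the two-$1$ count are straightforward.
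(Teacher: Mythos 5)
Your proposal assembles the right toolkit (Lemmas~\ref{l:wuelement}, \ref{l:coefficients}, \ref{l:semipcs-u}, \ref{l:(-1)-contr}), but it has a genuine gap at the point you yourself flag as "automatic": the claim that the iterated blowup has exactly two $1$'s. You justify this by saying each blowup "introduces exactly one new $1$ and destroys none", which is false: in every case of the blowup definition the two entries flanking the new $1$ are incremented, so existing $1$'s are routinely converted to $2$'s (e.g.\ $(1,1,1)\rightarrow(2,1,2,1)$). Indeed $(1,1,1)$, the unique blowup of $(0,0)$, has \emph{three} $1$'s, and your proposed base case $(1,1)$ is not a blowup of $(0,0)$ at all. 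The number of $1$'s therefore cannot be read off from the blowup combinatorics; it has to come from the lattice. The paper's proof pins it down by a counting argument you do not have: $|\W|=|\V|-2k$ (with $k$ the number of components), so at least $2k$ basis vectors $e$ miss the image of $\W\to\E$ and at least $m\geq 2k$ vectors $u$ of square $-2$ get bumped to square $-1$ via $u\mapsto u+e$; then computing $\sum_v(v\cdot v+2)$ before and after the $|\V|-3k$ contractions of Lemma~\ref{l:(-1)-contr}, and using that each terminal three-element component contributes at most $3$, forces $m=2k$ exactly — i.e.\ exactly two $(-1)$-vectors, hence exactly two $1$'s, per component.

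A second, related misstep is in your step (i): replacing $u$ (with $u\cdot u=-2$) by $u+e$ does not delete a string entry — it changes a $2$ into a $1$ and leaves $|\V|$ unchanged — so it is not a blowdown. The actual blowdown is the $(-1)$-contraction of Lemma~\ref{l:(-1)-contr}, which you relegate to cleaning up "transient" $(-1)$-vectors; in the paper's argument it is the main engine. The correct architecture is: (a) convert all the distinguished $(-2)$-vectors into $(-1)$-vectors at once (these become the two $1$'s of the blowup, and the positions where $c_i=s_i+1$); (b) contract $(-1)$-vectors repeatedly until every component consists of three $(-1)$-vectors, i.e.\ is the configuration $(1,1,1)$; (c) read the string of the intermediate set $\V'$ as an iterated blowup of $(0,0)$ dominated entrywise by $(c_1,\ldots,c_{|C|})$. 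Your step (ii)/(iii) gestures at this but never establishes that the terminal components are forced to be three $(-1)$-vectors (this again needs the $m=2k$ count together with the observation that a component of $\V''$ with no $(-1)$-vectors would require basis elements hit by a single vector, which were all eliminated at the start). As written, the proposal would not close.
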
 

\begin{proof} 
By Lemma~\ref{l:wuelement} there is a canonical basis $\E\subset\bZ^N$ adapted to $\V$, and Lemma~\ref{l:coefficients} applies.
We observe that the subset $\W\subset\V$ defined in Lemma~\ref{l:coefficients} does not coincide with $\V$. 
In fact, it is easy to check that if $k$ is the number of connected components of $\V$ then 
$|\W|=|\V|-2k$. Since $\La_\V\subset\bZ^N$ has finite index we have $|\V|=|\E|$, therefore 
there are at least $2k$ distinct elements in the complement of the image of the map 
$\W\to\E$ of Lemma~\ref{l:coefficients}. As a result, there are at least $2k$ distinct vectors $u\in\V$ as in 
Lemma~\ref{l:semipcs-u}. We modify all of those vectors by replacing each $u$ with $u+e$, 
where $e$ is the associated vector of $\E$. The result is a new subset $\V'$ with $|\V'| = |\V|$, 
and at least $2k$ vectors of square $-1$. Let $\E'\subset\E$ be the subset 
obtained by erasing from $\E$ all the vectors $e$ corresponding to the $u$'s that were modified. 
Then, $\V'$ is contained in the span of $\E'$ and $\E'$ is adapted to $\V'$, i.e.~$\sum_{v\in\V'} v = -\sum_{e\in\E'} e$. 
Lemma~\ref{l:(-1)-contr} can then be applied several times. We apply the lemma as many times as 
possible, i.e.~until the resulting subset $\V''$ has no vectors of square $-1$ belonging to a connected 
component with at least four vectors. We claim that every connected 
component of $\V''$ contains three vectors. In fact, a connected component $C''\subset\V''$ with no $(-1)$--vectors would 
contain two vectors $v$ such that $W\cdot v=v\cdot v$, and Equations~\eqref{e:sumofcoeff} and~\eqref{e:coefficients} 
would imply that each such $v$ would be the unique vector of $\V''$ hitting some $e\in\E$. But we had eliminated from 
$\V$ all $e$'s hitting a single vector at the beginning, and the construction of $\V''$ from $\V$ implies that a vector of 
$\V''$ can have this property only if it has square $-1$. 
Therefore $C''$ must consist of three vectors, and the claim holds. Next, we claim that 
each connected component of $\V''$ consists of $(-1)$--vectors. To see this it suffices to show that 
$\sum_{v\in\V''} (v\cdot v+2) = 3k$, since each of the $k$ components can contribute at most $3$ to this quantity.  
By our assumptions on $\V$ we have 
\[
\sum_{v\in\V} (v\cdot v+2) = \sum_{C\subset\V} (4-|C|) = 4k - |\V|.  
\]
In the first step of the above construction we turn a certain number $m\geq 2k$ of $(-2)$--vectors 
into $(-1)$--vectors, therefore $\sum_{v\in\V'} = 4k-|\V|+m$. Then we apply Lemma~\ref{l:(-1)-contr} 
$\sum_{C\subset\V} (|C|-3) = |\V|-3k$ times, each time increasing the quantity by $1$. The result is 
$k+m$. This number is at most $3k$ because each component can contribute at most $3$. On the other 
hand, $m\geq 2k$ by construction. This forces $m=2k$, and the last claim is proved. If we now look at 
how $\V'$ is obtained from $\V''$, we see that, in view of Equations~\eqref{e:sumofcoeff} and~\eqref{e:coefficients}, 
each component of $\V'$ must contain at least two 
$(-1)$--vectors. Since $m=2k$, each component of $\V'$ contains exactly two $(-1)$--vectors. It is now easy 
to check that this implies the statement. 
\end{proof}

\begin{thm}\label{t:semipos} 
Let $\Ga$ be a weighted graph as in Figure~\ref{f:graph} with $d=1$ and $\sum_i y_i = \sum_i x_i + 4\geq 6$. 
Let $N=k\sum_i y_i$ for some $k\geq 1$ and suppose that there is an isometric 
embedding $\La^k_\Ga\subset\bZ^N$ of finite and odd index. Then, the string of positive integers 
$
(x_1+2,\overbrace{2,\ldots,2}^{y_1-1},\ldots,x_t+2,\overbrace{2,\ldots,2}^{y_t-1})
$
is obtained from an iterated blowup of $(0,0)$ by replacing two $1$'s with $2$'s. 
\end{thm}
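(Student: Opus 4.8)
The plan is to deduce Theorem~\ref{t:semipos} from Proposition~\ref{p:semipos} by exhibiting the lattice $\La^k_\Ga$ as $\La_\V$ for a suitable \emph{semipositive} circular subset $\V\subset\bZ^N$ whose connected components correspond to the $k$ copies of $\La_\Ga$. First I would observe that $\La_\Ga$ itself has an obvious presentation as $\La_{C}$ for a circular subset $C$: the vertices of the cycle $\Ga$ of Figure~\ref{f:graph}, read in cyclic order, form a sequence of vectors in which consecutive vectors pair to $1$ (or to $(-1)^d=-1$ across the single labelled edge) and non-consecutive vectors pair to $0$, so the three defining conditions of Definition~\ref{d:circular} hold once $|C|\geq 3$, i.e.~$\sum_i y_i\geq 3$, which is guaranteed by $\sum_i y_i\geq 6$. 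Because $d=1$, exactly one edge of the cycle carries the label $-1$, so within $C$ there is a single pair $u,v$ with $u\cdot v=-1$; hence $C$ is semipositive. Taking $\V$ to be the image of a basis of $\La^k_\Ga$ under the given embedding $\La^k_\Ga\hookrightarrow\bZ^N$, the set $\V$ is a disjoint union of $k$ such cyclic strings, one per summand, and is therefore a semipositive circular subset of $\bZ^N$.

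Next I would check that $\V$ satisfies the four hypotheses of Proposition~\ref{p:semipos}. The first, that $\La_\V\subset\bZ^N$ has finite and odd index, is exactly the hypothesis of Theorem~\ref{t:semipos}. The third, $v\cdot v\leq -2$ for all $v\in\V$, holds because every weight in $\Ga$ is $-x_i-2\leq -3$ or $-2$. For the fourth, each connected component $D$ of $\V$ corresponds to one copy of $\La_\Ga$, whose cycle has $\sum_i y_i$ vertices (the $t$ vertices of weight $-x_i-2$ together with $\sum_i(y_i-1)$ vertices of weight $-2$), so $|D|=\sum_i y_i$; and $\sum_{v\in D}(v\cdot v+2)=-\sum_i x_i=-(\sum_i y_i-4)=4-|D|$, using the relation $\sum_i y_i=\sum_i x_i+4$. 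The condition $4-|D|<-1$, i.e.~$|D|=\sum_i y_i\geq 6$, is the standing hypothesis. The remaining point is the second hypothesis: the Wu element $W=\sum_{v\in\V}v$ must satisfy $W\cdot W=-N$. This I would verify by a direct computation in $\La_\Ga$: writing $W_0=\sum_{v\in C_0}v$ for a single component $C_0$, one has $W_0\cdot W_0=\sum_v v\cdot v + 2\sum_{\text{edges}} (\text{edge label})$. Each vertex of weight $-2$ contributes $-2$ to the first sum and sits on two unlabelled edges; each vertex of weight $-x_i-2$ contributes $-x_i-2$; the single labelled edge contributes $2\cdot(-1)$ while the other $\sum_i y_i -1$ edges contribute $2$ each. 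Collecting, $W_0\cdot W_0 = -\sum_i(x_i+2) - 2(\sum_i y_i - t) + 2(\sum_i y_i - 1) - 2 = -\sum_i x_i - 2t + 2t - 2 - 2+2 \cdot 0\ldots$; more carefully one finds $W_0\cdot W_0 = -\sum_i x_i -4 = -\sum_i y_i$, using $\sum_i y_i=\sum_i x_i+4$ again. Since $\V$ is an orthogonal union of $k$ copies of $C_0$, $W\cdot W = k\cdot W_0\cdot W_0 = -k\sum_i y_i = -N$, as required. (The exact bookkeeping in this edge-counting is the one genuinely fiddly part, and I would double-check the count of $-2$-weighted vertices, which is $\sum_i (y_i-1) = \sum_i y_i - t$, and of labelled versus unlabelled edges, which together number $\sum_i y_i$.)

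With all hypotheses of Proposition~\ref{p:semipos} verified, I apply it to the connected component $C$ of $\V$ coming from the single copy of $\La_\Ga$ sitting inside $\La^k_\Ga$. The string of self-intersections of $C$, read in the cyclic order induced by the cycle $\Ga$ and started at an appropriate vertex, is precisely $(x_1+2,\overbrace{2,\ldots,2}^{y_1-1},\ldots,x_t+2,\overbrace{2,\ldots,2}^{y_t-1})$. Proposition~\ref{p:semipos} hands us an iterated blowup $(s_1,\ldots,s_{|C|})$ of $(0,0)$ containing exactly two $1$'s with $c_i\geq s_i$ for all $i$; but here the strategy must be sharpened, since Theorem~\ref{t:semipos} claims the string \emph{equals} the blowup with two $1$'s turned into $2$'s, not merely dominates it. The main obstacle, and the step I expect to require the most care, is upgrading this inequality to an equality: one must argue that the total $\sum_i (c_i - 2) = \sum_i x_i$ is forced to match $\sum_i (s_i - 2)$ exactly. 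This should follow from a count analogous to the one in the proof of Proposition~\ref{p:semipos}: an iterated blowup $(s_1,\ldots,s_m)$ of $(0,0)$ satisfies $\sum_i (s_i - 2) = -2$ (each blowup preserves this "total charge", since $(0,0)$ has value $0-2+0-2=-4$, wait — rather one checks $\sum s_i - 2m$ is a blowup invariant equal to its value on $(0,0)$), hence $\sum(c_i-2)$ is determined, and combined with $c_i\geq s_i$ and $s_i\geq 1$ forces $c_i\in\{s_i, s_i+1\}$ with the discrepancy concentrated exactly at the two positions where $s_i=1$ — which must then become $2$. I would isolate this numerical rigidity as the crux, phrase it as a short lemma about iterated blowups if the computation in Proposition~\ref{p:semipos}'s proof does not already supply it verbatim, and then conclude.
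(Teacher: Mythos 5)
Your proposal follows the paper's proof essentially verbatim: realize the natural basis of $\La^k_\Ga$ as a semipositive circular subset $\V\subset\bZ^N$, verify the four hypotheses of Proposition~\ref{p:semipos} (including the computation $W\cdot W=-N$, which you carry out correctly despite the garbled intermediate bookkeeping), and then upgrade the domination $c_i\geq s_i$ to the exact statement by a sum count. The one point to fix is the invariant in that last step: each blowup increases the length by $1$ and the sum of the entries by $3$, so the invariant is $\sum_i s_i-3m$ (not $\sum_i s_i-2m$), giving $\sum_i s_i=3|C|-6$ while $\sum_i c_i=\sum_i x_i+2\sum_i y_i=3|C|-4$; the two sums therefore differ by exactly $2$ (they do not ``match''), and since every $c_i\geq 2$ this excess of $2$ is forced to be absorbed, one unit each, at the two positions where $s_i=1$ --- which is precisely the paper's concluding computation.
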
 

\begin{proof} 
The lattice $\La^k_\Ga$ has a natural basis, whose elements are in 1--1 correspondence with the 
vertices of the disjoint union of $k$ copies of $\Ga$, and intersect as prescribed by edges and weights. 
Since $d=1$, the image of such a basis is a semipositive circular subset $\V\subset\bZ^N$. 
The Wu element $W=\sum_{v\in\V} v$ satisfies 
\[
W\cdot W = k\left(\sum_i (-2-x_i) -2\sum_i (y_i-1) + 2\sum_i y_i - 4\right) = 
-k\sum_i y_i = -N
\]
and since $\La_\V=\La^k_\Ga$, the embedding $\La_\V\subset\bZ^N$ has finite and odd index.
Moreover, each connected component $D\subset\V$ satisfies 
\[
\sum_{v\in D} (v\cdot v+2) = -\sum_i x_i =  4 - \sum_i y_i  = 4 - |D| \leq -2.
\]
Therefore we can apply Proposition~\ref{p:semipos} to conclude that for any connected component $C\subset\V$ 
there is an iterated blowup $(s_1,\ldots, s_{|C|})$ of $(0,0)$ containing exactly 
two $1$'s such that the elements of the string 
$
(c_1,\ldots, c_{|C|}) := (x_1+2,\overbrace{2,\ldots,2}^{y_1-1},\ldots,x_t+2,\overbrace{2,\ldots,2}^{y_t-1})
$
satisfy $c_i\geq s_i$ for $i=1,\ldots, |C|$. It only remains to check that $c_i = s_i+1$ if and only if $s_i=1$.  
This follows immediately from the fact that 
\[
\sum_i c_i = -\sum_{v\in C} v\cdot v = 3|C| - 4= \sum_i s_i + 2, 
\]
where the last equality can be easily established by induction on the number of blowups. 
\end{proof}

\subsection{Positive circular subsets}\label{ss:poscs}

A circular subset $\V\subset\bZ^{N}$ is called {\em positive} if $u\cdot v \geq 0$ for any $u,v\in\V$ with $u\neq v$. 
Observe that for a positive circular subset $\V$ the subset $\W\subset\V$ defined in 
Lemma~\ref{l:coefficients} coincides with $\V$. Moreover, two canonical bases adapted to $\V$ differ 
by a permutation of their elements. This easily implies that, whether the map 
$\W\to\E$ of Lemma~\ref{l:coefficients} is injective or not, does not depend on the choice of $\E$.
We will assume these facts as understood throughout this subsection. 

The analysis of positive circular subsets admitting an adapted canonical basis splits naturally into two subcases, 
according to whether the map of Lemma~\ref{l:coefficients} is injective or not. We first deal with the simplest 
case, when the map is not injective. 

\subsection*{First subcase}\label{ss:notinjsubcase}

Let $\V\subset\bZ^N$ be a positive circular subset admitting an adapted canonical basis. 
From now on, and until further notice, we will assume that the map of Lemma~\ref{l:coefficients} is not injective. 

\begin{lemma}\label{l:pcsnotinj} 
Let $\V\subset\bZ^N$ be a positive circular subset and $\E\subset\bZ^N$ a canonical basis adapted to $\V$. 
Suppose that: 
\begin{itemize}
\item 
$|\V|=N$;
\item
the map of Lemma~\ref{l:coefficients} is not injective;
\item
$v\cdot v\leq -2$ for each $v\in\V$; 
\item
$\sum_{v\in D} (v\cdot v + 2) = -|D|$ for each connected component $D\subset\V$. 
\end{itemize}
Then, for each $e\in\E$ not in the image of the map of Lemma~\ref{l:coefficients} 
there exists $u\in\V$ such that $u=e_u-e$ and 
$\{v\in\V\ |\ v\cdot e=1\} = \{u\}$. Moreover, let $C\subset\V$ be the connected component 
containing $u$. Then, 
\begin{itemize} 
\item
if $|C|>3$ there is $v\in C$ such that $v\cdot e_u=1$ and 
$v$ hits $\geq 3$ distinct vectors of $\E$;
\item
if $|C|=3$ we have $C = \{e_1-e_2,e_3-e_1,-2e_3-e_1\}$ for some $e_1,e_2,e_3\in\E$.  
\end{itemize}
\end{lemma}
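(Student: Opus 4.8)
The standing non-injectivity hypothesis merely guarantees that there is some $e\in\E$ outside the image of the map of Lemma~\ref{l:coefficients} (since that map has domain of cardinality $|\V|=N=|\E|$), so the assertion has content. The plan is to pin down the corresponding vector $u$ exactly and then describe the connected component $C$ containing it by inspecting the two vectors of $\V$ that hit $u$.

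\textbf{Existence and form of $u$.} For a positive circular subset the set $\W$ of Lemma~\ref{l:coefficients} equals $\V$, so each $v\in\V$ has a well-defined $e_v$; that $e$ is not in the image means no $v\in\V$ has $v\cdot e\in\{-1,2\}$, so $v\cdot e\in\{0,1\}$ for all $v$ by Lemma~\ref{l:coefficients}. With Equation~\eqref{e:sumofcoeff} this gives a unique $u\in\V$ with $u\cdot e=1$ and $v\cdot e=0$ for every other $v$, so $\{v\in\V\ |\ v\cdot e=1\}=\{u\}$. It remains to prove $u\cdot u=-2$: granting this, $u$ hits exactly two basis vectors, one of them $e$, and $u\in\W$ forces the other to be $e_u$ with $u\cdot e_u=-1$, i.e.\ $u=e_u-e$. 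To get $u\cdot u=-2$ I argue as in the proof of Lemma~\ref{l:semipcs-u}: the set $\V'=(\V\setminus\{u\})\cup\{u+e\}$ is positive circular, lies in the span of $\E\setminus\{e\}$, has the same connected components as $\V$ with $u$ replaced by $u+e$, and every component $D'$ of $\V'$ satisfies $\sum_{v\in D'}(v\cdot v+2)<0$. If every vector of $\V'$ had square $\le-2$, each component of $\V'$ would be isometric to some $\La_\Ga$ with $d=0$, so by Lemma~\ref{l:negdef} the lattice $\La_{\V'}$ would be negative definite of rank $|\V'|=N$, which is impossible inside the rank $N-1$ lattice spanned by $\E\setminus\{e\}$. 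Hence some vector of $\V'$ has square $-1$; the only candidate is $u+e$, so $(u+e)\cdot(u+e)=-1$, i.e.\ $u\cdot u=-2$.

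\textbf{The vectors hitting $u$, and the case $|C|>3$.} Let $u',u''$ be the vectors of $\V$ hitting $u$; they lie in $C$ and $u'\cdot u=u''\cdot u=1$. From $u=e_u-e$ and $u'\cdot e=u''\cdot e=0$ we get $u'\cdot e_u=u''\cdot e_u=1$, while $u\cdot e_u=-1$; since these already sum to $1$, Equation~\eqref{e:sumofcoeff} shows $u,u',u''$ are the only vectors hitting $e_u$. Now suppose $|C|>3$, so that $u'$ and $u''$ are non-adjacent in the cycle $C$ and $u'\cdot u''=0$. If both $u'$ and $u''$ hit only two basis vectors, then using $v\cdot v\le-2$, $v\in\W$ and $v\cdot e_u=1$ one checks that each has the form $e_v-e_u$ (square $-2$) or $-2e_v-e_u$ (square $-5$) with $e_v\notin\{e,e_u\}$; the four resulting values of $u'\cdot u''$ all lie in $\{-5,-2,-1,1\}$, never $0$, a contradiction. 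Hence one of $u',u''$, call it $v$, hits at least three distinct vectors of $\E$, and $v\cdot e_u=1$, as required.

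\textbf{The case $|C|=3$.} Write $C=\{u,u',u''\}$, a ``triangle'', so $u'\cdot u''=1$; the hypothesis $\sum_{v\in D}(v\cdot v+2)=-|D|$ gives $u\cdot u+u'\cdot u'+u''\cdot u''=-9$, so with $u\cdot u=-2$ and $v\cdot v\le-2$ we have $\{u'\cdot u',u''\cdot u''\}=\{-2,-5\}$ or $\{-3,-4\}$. To rule out $\{-3,-4\}$, let $u''\cdot u''=-4$: then $u''$ hits $e_u$ (product $1$) and, by $u''\in\W$, exactly three other basis vectors with products $-1,1,1$, and imposing $u'\cdot u''=1$ one sees that one of the two product-$1$ basis vectors of $u''$, say $g$, is hit by no vector of $\V$ besides $u''$ (because $u\cdot g=u'\cdot g=0$ and $\sum_v v\cdot g=1$). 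Then $g$ is outside the image of the map of Lemma~\ref{l:coefficients}, and applying the case already proved with $g$ in place of $e$ forces the unique vector hitting $g$, namely $u''$, to equal $e_{u''}-g$ — but that hits only two basis vectors, contradicting $u''\cdot u''=-4$. Hence (up to swapping $u',u''$) $u'\cdot u'=-2$ and $u''\cdot u''=-5$; as before $u'=e_{u'}-e_u$, while $u''$ hits $e_u$ and either one more basis vector, giving $u''=-2e_{u''}-e_u$, or four more, giving $u'\cdot u''\le0$; so the former holds and $u'\cdot u''=1$ forces $e_{u'}=e_{u''}$. Setting $e_1:=e_u$, $e_2:=e$, $e_3:=e_{u'}$ this reads $C=\{e_1-e_2,\ e_3-e_1,\ -2e_3-e_1\}$.

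\textbf{Main obstacle.} The case $|C|>3$ is a finite check once the two shapes of a vector hitting only two basis elements are isolated. The real work is the case $|C|=3$: bounding the self-intersections on the triangle, enumerating the shapes of $u'$ and $u''$ forced by membership in $\W$, and above all spotting the ``pendant'' basis vector $g$ that makes it possible to feed the first part of the statement back in and kill the partition $\{-3,-4\}$.
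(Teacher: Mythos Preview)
Your treatment of the existence and form of $u$ and of the case $|C|>3$ tracks the paper's proof and is correct; the only slip is that your list $\{-5,-2,-1,1\}$ of possible values of $u'\cdot u''$ should read $\{-1,1\}$, but since $0$ is still excluded the conclusion stands.

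For $|C|=3$ you diverge from the paper. The paper argues via the inequality $1=v\cdot w\le a_va_w(e_v\cdot e_w)-1$ to force $e_v=e_w$ and $a_va_w=-2$, obtaining the squares $-2,-5$ in one stroke; you instead list the two partitions $\{-2,-5\}$ and $\{-3,-4\}$ of $-7$ and try to eliminate the latter via a pendant basis vector $g$.

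That elimination has a gap. Your justification that $g$ ``is hit by no vector of $\V$ besides $u''$'' is the sentence ``$u\cdot g=u'\cdot g=0$ and $\sum_v v\cdot g=1$'', which only yields $\sum_{v\notin C}v\cdot g=0$. That does \emph{not} prevent some $z\in\V\setminus C$ from having $z\cdot g\in\{-1,2\}$ (so that $g$ lies in the image of the map) with compensation from other vectors. The repair is short but not automatic: writing out the $\{-3,-4\}$ triangle explicitly one finds $u=e_u-e$, $u'=e_{u'}-e_u-h$ and $u''=h-e_u-e_{u'}-g$ with $h=e_{u''}$; then for any $z\in\V\setminus C$ the orthogonality relations $z\cdot u=z\cdot u'=z\cdot u''=0$ together with $z\cdot e=0$ give in turn $z\cdot e_u=0$, then $z\cdot e_{u'}=z\cdot h$, and finally $z\cdot g=0$. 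Once this is inserted your argument goes through, and your handling of the surviving $\{-2,-5\}$ case is correct.
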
 

\begin{proof}
By Lemma~\ref{l:negdef} and the first assumption we have $\rk\La_\V = |\V| = N$. Therefore, since the map of Lemma~\ref{l:coefficients} is not injective, it cannot be surjective. 
For each $e\in\E$ not in the image of the map  we have 
$v\cdot e\in\{0,1\}$ for each $v\in\V$. By Equation~\eqref{e:sumofcoeff} there is a unique $u\in\V$ such 
that $u\cdot e=1$. We claim that the set 
$\{e'\in\E\ |\ u\cdot e'\neq 0\}$ contains only $e$ and $e_u$ and moreover $u\cdot e_u=-1$. 
Otherwise, we would have $u\cdot u<-2$, and by replacing $u$ in $\V$ 
with $u + (u\cdot e) e$ we would obtain a new set $\V'\subset\bZ^N$ of 
vectors contained in the span of $\E\setminus\{e\}$, such that $\La_{\V'}$ would be isometric to 
$\La_\Ga$ for some $t, x_i, y_i\geq 1$. Then, by Lemma~\ref{l:negdef} we would have $\rk(\La_{\V'}) = |\V'|$.
This would contradict the fact that $|\V'|=|\V|=|\E|>|\E\setminus\{e\}|$, therefore the claim is proved, and  
we have $u=e_u-e$. Let $v,w\in\V$ be the two vectors satisfying $v\cdot u=w\cdot u=1$. Then, 
$v\cdot e_u=w\cdot e_u=1$, and we may write 
\begin{equation}\label{e:notinj}
v = a_v e_v - e_u - \cdots, \quad w = a_w e_w - e_u - \cdots\quad \text{for some}\ a_v,a_w\in\{-2,1\}.
\end{equation}
Let $C\subset\V$ be the connected component containing $u$. By contradiction, suppose that 
$|C|>3$ and both $v$ and $w$ hit only two vectors of $\E$. Then,
\[
0 = v\cdot w = (a_v e_v - e_u)\cdot(a_w e_w - e_u) = a_v a_w (e_v\cdot e_w) - 1,
\]
which is impossible because $e_v\cdot e_w\in\{-1,0\}$. Therefore, up to renaming $v$ and $w$ 
we may assume that $v$ hits at least three distinct vectors of $\E$. If $|C|=3$ we also have $v\cdot w=1$, 
therefore by Equation~\eqref{e:notinj} 
\[
1 = v\cdot w = (a_v e_v - e_u-\cdots)\cdot(a_w e_w - e_u-\cdots) 
\leq (a_v e_v - e_u)\cdot(a_w e_w - e_u) = a_v a_w (e_v\cdot e_w) - 1,
\]
which implies $e_v=e_w$ and $a_v a_w = -2$. Up to swapping $v$ and $w$ 
we may assume $a_v=1$ and $a_w = -2$. This implies $v\cdot v\leq -2$ and $w\cdot w\leq -5$, 
and since $-3 = \sum_{v\in C} (v\cdot v+2)$ we must have $v\cdot v=-2$ and $w\cdot w=-5$, 
and the statement follows immediately.
\end{proof}

\begin{defn}\label{d:-2-exp}
A string of negative integers $(-n_1,\ldots, -n_{k+1})$ is obtained from $(-m_1,\ldots, -m_k)$ 
by a {\em (-2)-expansion} if $m_1=2$ and 
\[
(-n_1,\ldots, -n_{k+1}) =
\begin{cases} 
(-2,-m_1,\ldots, -m_k-1)\quad\text{or}\\
(-2, -m_2-1,\ldots, -m_k, -m_1).
\end{cases}
\]
\end{defn} 

\begin{prop}\label{p:pcsnotinj} 
Let $\V\subset\bZ^N$ be a positive circular subset such that: 
\begin{itemize} 
\item
$v\cdot v\leq -2$ for each $v\in\V$; 
\item
$\La_\V\subset\bZ^N$ has finite and odd index; 
\item  
the Wu element $W=\sum_{v\in\V} v$ satisfies $W\cdot W = -N$; 
\item
the map of Lemma~\ref{l:coefficients} is not injective; 
\item
$\sum_{v\in D} (v\cdot v + 2) = -|D|$ for each connected component $D\subset\V$. 
\end{itemize}
Then, there is a connected component $C\subset\V$ whose string of self--intersections  
is obtained, up to circular symmetry, by a sequence of (-2)-expansions from $(-2, -2, -5)$ . 
\end{prop}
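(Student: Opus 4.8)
The plan is to argue by strong induction on $|\V|$, at each step ``undoing'' a $(-2)$-expansion. First I invoke Lemma~\ref{l:wuelement} to fix a canonical basis $\E\subset\bZ^N$ adapted to $\V$; since $\La_\V$ has finite index this forces $|\V|=|\E|=N$. Because $\V$ is positive, the set $\W$ of Lemma~\ref{l:coefficients} is all of $\V$, so by hypothesis the map $\V\to\E$, $v\mapsto e_v$, is not injective, hence not surjective as $|\V|=|\E|$; I choose $e\in\E$ outside its image. Lemma~\ref{l:pcsnotinj} then supplies $u\in\V$ with $u=e_u-e$ and $\{z\in\V\mid z\cdot e=1\}=\{u\}$; let $C$ be the connected component of $\V$ containing $u$. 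The claim will be that $C$ itself is a component of the required kind, proved by the induction.

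If $|C|=3$, Lemma~\ref{l:pcsnotinj} gives $C=\{e_1-e_2,\ e_3-e_1,\ -2e_3-e_1\}$ for suitable $e_i\in\E$, whose string of self-intersections is $(-2,-2,-5)$: this is the base case. If $|C|>3$, Lemma~\ref{l:pcsnotinj} supplies a neighbour $v$ of $u$ with $v\cdot e_u=1$ hitting at least three elements of $\E$, so $v\cdot v\leq-3$; let $w$ be the other neighbour of $u$. Positivity forces $e_u$ to be hit only by $u,v,w$: otherwise some $z\neq u$ would satisfy $z\cdot u=z\cdot e_u\in\{-1,2\}$ (using $u=e_u-e$ and $z\cdot e=0$), which is impossible for a positive circular subset; hence $u$ is the unique vector of $\V$ whose special element is $e_u$. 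I then replace $u$ and $v$ by the single vector $\tilde v:=v+e_u$, obtaining $\V':=(\V\setminus\{u,v\})\cup\{\tilde v\}$, which lies in the span of $\E\setminus\{e\}$ --- a copy of $\bZ^{N-1}$ --- and has $N-1$ elements. The next task is to check that $\V'$ again satisfies the hypotheses of the Proposition: it is a positive circular subset with $\tilde v\cdot\tilde v=v\cdot v+1\leq-2$; its Wu element is $W'=W+e=-\sum_{e'\in\E\setminus\{e\}}e'$, so $W'\cdot W'=-(N-1)$; $\La_{\V'}$ still has finite and odd index, by a determinant computation in the spirit of the end of the proof of Proposition~\ref{p:embed}; each component $D$ of $\V'$ satisfies $\sum_{x\in D}(x\cdot x+2)=-|D|$; and the map of Lemma~\ref{l:coefficients} for $\V'$ is still not injective, since $e_u$ is the special element of no vector of $\V'$ while $|\V'|=|\E\setminus\{e\}|$.

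Next, let $C'\subset\V'$ be the component containing $\tilde v$; it is a cycle of length $|C|-1\geq 3$, and every component of $\V'$ other than $C'$ is literally a component of $\V$. Reconstructing $C$ from $C'$ amounts to inserting the $(-2)$-vector $u$ between $w$ and $\tilde v$ and replacing $\tilde v$ by $v$ (with $v\cdot v=\tilde v\cdot\tilde v-1$); since in either clause of Definition~\ref{d:-2-exp} the newly inserted $-2$ has at most one neighbour of weight $-2$, and $v\cdot v\leq-3$, this operation is a $(-2)$-expansion precisely when $w\cdot w=-2$. Granting $w\cdot w=-2$, the string of $C$ is a $(-2)$-expansion of that of $C'$. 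By the inductive hypothesis $\V'$ has a component whose string is obtained from $(-2,-2,-5)$ by $(-2)$-expansions; if that component is $C'$ then the same is true of $C$, and if it is some other component $D$ then $D$ is already such a component of $\V$. Either way $\V$ has the required component, and the induction closes.

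The step I expect to be the main obstacle is the claim that $w\cdot w=-2$ --- equivalently, that exactly one of the two neighbours of $u$ hits three or more elements of $\E$. I would try to extract this from Equation~\eqref{e:notinj} (which confines $w$ to $w=a_we_w-e_u-\cdots$ with $a_w\in\{-2,1\}$), together with the normalization $\sum_{x\in C}(x\cdot x+2)=-|C|$ and positivity, possibly after choosing the free basis vector $e$ more carefully among all elements of $\E$ outside the image of the map of Lemma~\ref{l:coefficients}. A more global alternative, patterned on the proof of Proposition~\ref{p:semipos}, would be to replace every such $u=e_u-e$ simultaneously by the $(-1)$-vector $e_u$, contract all these $(-1)$-vectors using the positive analogue of Lemma~\ref{l:(-1)-contr}, and then read the $(-2)$-expansion structure off the resulting configuration. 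The remaining verifications --- positivity and circularity of $\V'$, and its finite odd index --- are routine.
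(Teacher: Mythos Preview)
Your approach is essentially the paper's: both contract by removing the $(-2)$-vector $u$ and absorbing $e_u$ into its neighbour $v$, then iterate until a component reaches size $3$ and is forced to be $(-2,-2,-5)$. The paper phrases this as a direct iteration of Lemma~\ref{l:pcsnotinj} rather than an induction on the Proposition, but the content is the same.

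The step you flag as the main obstacle --- that $w\cdot w=-2$ --- is in fact not an obstacle at all: it follows by reapplying Lemma~\ref{l:pcsnotinj} to $\V'$ with $e_u$ in the role of $e$. You already showed that $e_u$ lies outside the image of the map of Lemma~\ref{l:coefficients} for $\V'$, and that $w$ is the unique element of $\V'$ hitting $e_u$ (with $w\cdot e_u=1$). The remaining hypotheses of Lemma~\ref{l:pcsnotinj} for $\V'\subset\bZ^{N-1}$ (namely $|\V'|=N-1$, the map not injective, squares $\le-2$, the component sums) are exactly what you verified. The lemma then forces $w=e_w-e_u$, hence $w\cdot w=-2$. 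Equivalently, the same rank argument that proved $u\cdot u=-2$ works again: replace $w$ by $w+e_u$ to push $\V'$ into the span of $\E'\setminus\{e_u\}\cong\bZ^{N-2}$; if $(w+e_u)\cdot(w+e_u)\le-2$ then Lemma~\ref{l:negdef} gives rank $N-1$ inside $\bZ^{N-2}$, a contradiction. The paper records this by noting, after the contraction, that ``$e_u$ does not belong to the image of the map of Lemma~\ref{l:coefficients}'', which is precisely what feeds the next step.

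One genuine wrinkle in your inductive formulation: verifying that $\La_{\V'}\subset\bZ^{N-1}$ has finite and \emph{odd} index is not the routine determinant check you suggest, and the paper never does it. That hypothesis is used in the Proposition only (via Lemma~\ref{l:wuelement}) to obtain an adapted canonical basis, and you already have one, namely $\E'$. So either restate the induction with the weaker hypothesis ``there exists an adapted canonical basis'' in place of ``finite and odd index'', or --- as the paper does --- iterate Lemma~\ref{l:pcsnotinj} directly rather than inducting on the full Proposition.
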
 

\begin{proof} 
Observe that $\La_\V$ is isometric to an intersection lattice $\La_\Ga$ for some $t, x_i, y_i\geq 1$. 
By Lemma~\ref{l:negdef} and the second assumption we have $\rk\La_\V = |\V| = N$. Therefore, since the map of 
Lemma~\ref{l:coefficients} is not injective, it cannot be surjective. If a connected component of $\V$ has exactly 
three elements the statement follows immediately from Lemma~\ref{l:pcsnotinj}. Hence, suppose that 
each connected component of $\V$ has at least four elements. By Lemma~\ref{l:wuelement}, the first two assumptions 
imply the existence of a canonical basis $\E$ adapted to $\V$. This, together with the remaining assumptions imply that 
Lemma~\ref{l:pcsnotinj} can be applied to $\V$. Let $u$ and $v$ be vectors as in 
Lemma~\ref{l:pcsnotinj} belonging to a connected component $C\subset\V$. 
We can change $\V$ into a new subset $\V'$ by a process we will call {\em contraction}.  
The subset $\V'$ is obtained by replacing $C$ with $C':=\left(C\setminus\{u,v\}\right)\cup\{v+e_u\}$,  
which is contained in the span of $\E' := \E\setminus\{e\}$. When regarded as a subset of $\bZ^{N-1}$, 
$\V'$ is still a positive circular subset, $\E'$ is adapted to $\V'$, $|\V'|=N-1$, $v'\cdot v'\leq -2$ for each $v'\in\V'$ and  
$\sum_{v'\in C'} (v'\cdot v' + 2) = -|C'|$. Moreover, $e_u$ does not belong to the image of the map of Lemma~\ref{l:coefficients} and $|C'|\geq 3$. If $|C'|>3$ we can apply Lemma~\ref{l:pcsnotinj} again 
and contract $\V'$ to a subset $\V''$, possibly modifying a connected component different from $C'$. 
We can keep contracting as long as all connected components of the resulting subset 
have cardinality greater than $3$. When one of the components reaches cardinality $3$ 
we can apply the last part of Lemma~\ref{l:pcsnotinj}. The statement is easily obtained combining 
the information from the lemma with the fact that the component is the result of a sequence of 
contractions. 
\end{proof}

\subsection*{Second subcase}\label{sss:injsubcase}
In this subsection we study the positive circular subsets $\V\subset\bZ^N$ with an adapted canonical 
basis such that the map of Lemma~\ref{l:coefficients} is injective. 

\begin{lemma}\label{l:pcsinj}
Let $\V\subset\bZ^{|\V|}$ be a positive circular subset such that: 
\begin{itemize} 
\item
$v\cdot v\leq -2$ for each $v\in\V$; 
\item  
the Wu element $W=\sum_{v\in\V} v$ satisfies $W\cdot W = -|\V|$; 
\item
there is a canonical basis $\E\subset\bZ^{|\V|}$ adapted to $\V$ and the 
map of Lemma~\ref{l:coefficients} is injective. 
\end{itemize}
Then, the following properties hold:
\begin{enumerate}
\item
for each $v\in\V$ we have $v\cdot e_v=-1$; 
\item
for each $e\in\E$ there exist distinct elements $u_e, v_e, w_e\in\V$ such that $u_e\cdot e=-1$ and 
$v_e\cdot e = w_e\cdot e = 1$. Moreover, $x\cdot e=0$ for each $x\in\V\setminus\{u_e,v_e,w_e\}$;
\item
for each $u\in\V$ with $u\cdot u=-2$, there exist $f,g\in\E$ and $v, w, z\in\V$ such that: 
\begin{itemize}
\item
$u=e_u - f$; 
\item
$v\cdot u=1$, $v\cdot f = 0$ and $v = e_v - e_u - \cdots$ ;
\item
$w\cdot u = 1$, $w\cdot e_u=0$ and $w = f - \cdots$;
\item 
$z\cdot u = 0$ and $z = e_z - e_u - f - g -\cdots$.    
\end{itemize} 
\end{enumerate}
\end{lemma}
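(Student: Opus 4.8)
The three assertions are local statements about how vectors of $\V$ pair with elements of the adapted canonical basis $\E$, so the plan is to exploit Equations~\eqref{e:sumofcoeff} and~\eqref{e:coefficients} together with the injectivity hypothesis. First I would record the combinatorial consequence of injectivity: since $\W = \V$ for a positive circular subset and the map $v\mapsto e_v$ is injective between sets of the same cardinality $|\V|=|\E|$, it is a bijection; in particular every $e\in\E$ is of the form $e_v$ for a unique $v\in\V$. Combined with Lemma~\ref{l:coefficients} (which gives $v\cdot e\in\{-1,0,1,2\}$ and, for $v\in\W=\V$, exactly one $e$ with $v\cdot e\in\{-1,2\}$, the value on all other basis elements lying in $\{0,1\}$), and with Equation~\eqref{e:coefficients} which forces $\sum_e v\cdot e(v\cdot e-1)=2$, the only way to realize the sum $2$ with entries in $\{-1,0,1,2\}$ is either one entry equal to $2$ (and the rest in $\{0,1\}$) or one entry equal to $-1$ (and the rest in $\{0,1\}$). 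So for each $v$, either $v\cdot e_v=2$ or $v\cdot e_v=-1$.

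For part~(1), I would rule out the case $v\cdot e_v=2$: if $v\cdot e_v=2$ then from $v=-\sum_e (v\cdot e)e$ we get $v\cdot v = -4 - \#\{e\neq e_v: v\cdot e=1\}\leq -4$. Writing $v = -2e_v - \sum_{j}e_{i_j}$ with the $e_{i_j}$ distinct from $e_v$, one checks that $v$ hits too many basis vectors to be compatible with the circular structure: each $e_{i_j}$ (being, by bijectivity, some $e_w$) must then be hit by $w$ with $w\cdot e_{i_j}\in\{-1,2\}$ as well as by $v$ with coefficient $1$, and by Equation~\eqref{e:sumofcoeff} there is room for at most one more vector hitting $e_{i_j}$; pushing this around the component and using that $|u\cdot v|\leq 1$ for $u\neq v$ forces a contradiction with the third condition of Definition~\ref{d:circular} (each vector hits exactly two vectors of $\V$ with $\pm1$). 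The cleanest route is probably the ``contraction'' trick used elsewhere: replace $v$ by $v+e_v$ to land in a smaller $\bZ^{|\V|-1}$, note the resulting subset would be a circular subset of the type underlying $\La_\Ga$, and invoke Lemma~\ref{l:negdef} to get $\rk = $ cardinality, contradicting the drop in ambient rank — exactly the argument in the proof of Lemma~\ref{l:pcsnotinj}. Hence $v\cdot e_v=-1$ for all $v$, which is part~(1).

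Part~(2) is then mostly bookkeeping. Fix $e\in\E$. By part~(1) and bijectivity there is a unique $u_e:=v$ with $e_v=e$, and it satisfies $u_e\cdot e=-1$; every other $x\in\V$ has $x\cdot e\in\{0,1\}$ (since $e\neq e_x$ and Lemma~\ref{l:coefficients} puts the ``exceptional'' value on $e_x$ only). Equation~\eqref{e:sumofcoeff} says $\sum_{v\in\V}v\cdot e=1$, so $\sum_{x\neq u_e}x\cdot e = 2$, forcing exactly two vectors $v_e,w_e$ with $x\cdot e=1$ and all others $0$; distinctness of $u_e,v_e,w_e$ is automatic since their pairings with $e$ differ. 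That is part~(2) verbatim. For part~(3), take $u\in\V$ with $u\cdot u=-2$; by part~(1), $u=-\sum_e(u\cdot e)e$ with $u\cdot e_u=-1$ and $u\cdot u=-2$ leaves room for exactly one further basis element $f$ with $u\cdot f=1$, so $u=e_u-f$. Now apply part~(2) to the basis element $e_u$: the distinguished vector $u_{e_u}$ with $\cdot\,e_u=-1$ is $u$ itself, and $e_u$ is hit with coefficient $+1$ by exactly two vectors; one of them can be taken to be the unique vector realizing the ``$f$-side'' structure and the other the one on the ``$e_u$-side,'' which after matching up pairings with $u$ and with $f$ (using $|{\cdot}|\le1$ off the diagonal and $u=e_u-f$) gives the two vectors $v$ (with $v\cdot u=1$, $v\cdot f=0$, $v=e_v-e_u-\cdots$) and $w$ (with $w\cdot u=1$, $w\cdot e_u=0$, $w=f-\cdots$). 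The vector $z$ is produced by applying part~(2) to $f$: besides $u$ there are two vectors hitting $f$ positively, one of which is $w$; the other, call it $z$, has $z\cdot u = z\cdot(e_u-f) = z\cdot e_u - z\cdot f = z\cdot e_u - 1$, and since $z$ must not hit $u$ (it already hits the two neighbors $v,w$ of $u$ in the circular structure—here I would argue $z$ is the next vector around the cycle) we get $z\cdot e_u=1$, i.e.\ $z=e_z-e_u-f-g-\cdots$ for some further basis element $g$ forced by $z\cdot z\leq -2$ together with part~(1) giving $z\cdot e_z=-1$.

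**Main obstacle.** The delicate point is part~(1): excluding $v\cdot e_v=2$ cleanly. The contraction-to-$\La_\Ga$-and-apply-Lemma~\ref{l:negdef} argument needs the resulting subset to genuinely be (isometric to) one of the $\La_\Ga$ lattices, which requires checking that contracting a high-coefficient vector preserves the positive circular structure and the self-intersection bound $v\cdot v\le-2$ on the modified vector — the modified vector is $v+e_v$ with square $(v\cdot v)+2\cdot(v\cdot e_v)\cdot(\text{sign stuff})+(-1)$, and one must confirm this stays $\le -2$, or else argue as in Lemma~\ref{l:semipcs-u} that if it jumps to $-1$ we still reach a rank contradiction. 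Getting the ambient-rank bookkeeping exactly right (the subset sits in $\bZ^{|\V|-1}$ but still has $|\V|$ elements, forcing a nontrivial relation, contradicting Lemma~\ref{l:negdef}'s rank equality) is where the real care is needed; everything after that is linear algebra with the three defining conditions of a circular subset.
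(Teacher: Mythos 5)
Your part (2) is correct and matches the paper, but parts (1) and (3) have genuine gaps. For part (1) you correctly reduce to the dichotomy $v\cdot e_v\in\{-1,2\}$ and then declare this ``the delicate point,'' proposing first a hand-waved ``push around the component'' argument and then a contraction trick. Neither works as written: if $v\cdot e_v=2$ then the coefficient of $e_v$ in $v$ is $-2$, so replacing $v$ by $v+e_v$ does \emph{not} land you in the span of $\E\setminus\{e_v\}$ (you would need $v+2e_v$, which changes pairings with other vectors by $2(u\cdot e_v)$ and can destroy the circularity condition $|u\cdot v|\le 1$), so the Lemma~\ref{l:negdef} rank argument does not apply. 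In fact this step is a one-liner you already have all the pieces for: by injectivity every $u\ne v$ satisfies $u\cdot e_v\in\{0,1\}$, so Equation~\eqref{e:sumofcoeff} applied to $e_v$ gives $1=v\cdot e_v+\sum_{u\ne v}u\cdot e_v\ge v\cdot e_v$, which rules out $v\cdot e_v=2$ outright. This is exactly what the paper does.

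In part (3) two real arguments are missing. First, you never rule out that \emph{both} neighbours of $u$ satisfy $(\,\cdot\,e_u,\,\cdot\,f)=(1,0)$; the paper excludes this by applying part (2) to $f$ (the unique vector $w'$ with $e_{w'}=f$ would then be forced to hit $e_u$, overloading $e_u$), and your phrase about ``matching up pairings'' skips precisely this case analysis. (You also misstate the incidence at $f$: the vector $w=f-\cdots$ hits $f$ with $w\cdot f=-1$, not positively.) Second, and more seriously, the existence of the fourth basis element $g$ in $z=e_z-e_u-f-g-\cdots$ is equivalent to $z\cdot z\le -4$, and your justification ``forced by $z\cdot z\le -2$ together with part (1)'' proves nothing, since $z=e_z-e_u-f$ already satisfies $z\cdot z=-3\le -2$. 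The paper's proof of $z\cdot z<-3$ is a genuine argument: if $z=e_z-e_u-f$ exactly, then $e_u$ and $f$ are each used three times, so both neighbours of $z$ must involve $e_z$; but $w\cdot z=1$ forces $w$ to be a neighbour of $z$ while $e_z$ cannot appear in $w$, a contradiction. Without these two steps the proof of (3) is incomplete.
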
 

\begin{proof}
(1) By the injectivity of the map each $v\in\V$ is characterized as the unique element $u\in\V$ 
such that $u\cdot e_v\in\{-1,2\}$. It follows that for each $u\in\V\setminus\{v\}$ we have $u\cdot e_v\in\{0,1\}$. Clearly, Equation~\eqref{e:sumofcoeff} can be satisfied only if $v\cdot e_v=-1$. 

(2) Since $|\V|=|\E|$, the 
map $v\mapsto e_v$ is a bijection. Denote by $e\mapsto u_e$ the inverse map. By (1) $u_e\cdot e = -1$, and 
by Equation~\eqref{e:sumofcoeff} there exist distinct elements $u_e, v_e, w_e\in\V$ such that $u_e\cdot e=-1$ and 
$v_e\cdot e = w_e\cdot e = 1$, while $x\cdot e=0$ for each $x\in\V\setminus\{u_e,v_e,w_e\}$. 

(3) The fact that $u=e_u-f$ for some $f\in\E$ follows immediately from (1), the definition of $e_u$ and the 
fact that $u\cdot u = -2$. Observe that for any $v\in\V$ with $v\cdot u=1$, since 
$v\cdot e\in\{-1,0,1\}$ for each $e\in\E$, we have 
$(v\cdot e_u, v\cdot f)\in\{(1,0),(0,-1)\}$. There are exactly two elements $v, w\in\V$ such that 
$u\cdot v=u\cdot w=1$, and by the previous observation we have $v\cdot f, w\cdot f \in\{-1,0\}$. On the other 
hand, $v\cdot f=w\cdot f =-1$ would imply $e_v=f=e_w$, which is impossible because we are 
assuming that the map $u\mapsto e_u$ is injective. Therefore, either $(v\cdot e_u, v\cdot f) = (1,0)$ 
or $(w\cdot e_u, w\cdot f) = (1,0)$, and up to renaming $v$ and $w$ we may assume  
$(v\cdot e_u, v\cdot f) = (1,0)$. Therefore $v\cdot f = 0$ and $v = e_v - e_u - \cdots$. 
This forces $(w\cdot e_u, w\cdot f) = (0,-1)$, because otherwise $w\cdot e_u = 1$, and there 
would be $w'\in\V$ distinct from $v$ and $w$ with  
$e_{w'} = f$. Since $w'\cdot u=0$, this would imply $w'\cdot e_u=1$, contradicting (2) because 
$e_u$ would already appear in $u$, $v$ and $w$. Therefore $w\cdot e_u=0$ and $w=f-\cdots$. 

By (2) and the fact that $(v\cdot e_u, w\cdot e_u) = (-1,0)$, there exist $z\in\V$ such that $z\cdot e_u = -1$. Since 
the vectors adjacent to $u$ are $v$ and $w$, we have $z\cdot u=0$ and therefore $z\cdot f = -1$ as well. 
Thus, $z = e_z - e_u - f-\cdots$ and $z\cdot z\leq -3$. We claim that $z\cdot z<-3$. In fact, suppose by 
contradiction that $z\cdot z = -3$, so that $z = e_z - e_u - f$. Since $e_u$ and $f$ already appear three times, $e_z$ must appear in both the adjacent vectors of $z$, say $z'$ and $z''$. On the other hand, we have 
$1\geq w\cdot z = (f-\cdots)\cdot (e_z-e_u-f) \geq 1$, therefore $w\cdot z=1$, which implies $w\in\{z', z''\}$. 
But this is not possible because $e_z$ cannot appear in $w$. Therefore, there exist some $g\in\E$ with 
$z = e_z-e_u-f-g-\cdots$. 
\end{proof}

\begin{defn}\label{d:-2-contr}
Let $\V\subset\bZ^{|\V|}$ be a positive circular subset satisfying the hypotheses of Lemma~\ref{l:pcsinj}.
If $u\in\V$ satisfies $u\cdot u=-2$ and the connected component of $u$ contains more than $3$ elements, 
by Lemma~\ref{l:pcsinj} there exist $f, g\in\E$ and $v, z\in\V$ such that $u=e_u-f$, $v=e_v-e_u-\cdots$ 
and $z = e_z - e_u - f -g-\cdots$. Then, we define $v' := u+v = e_u - f-\cdots$, $z' := z + e_u = e_z-f-g-\cdots$ 
and 
\[
\V' := \V\setminus\{u,v,z\}\cup \{v',z'\}.
\]
We say that the set  $\V'$ is obtained from $\V$ by a {\em (-2)--contraction}. 
\end{defn}

\begin{rmk}\label{r:-2-contr}
Observe that if every connected component $C$ of the set $\V'$ in Definition~\ref{d:-2-contr} satisfies 
$|C|\geq 3$, then $\V'$ is circular when regarded as a subset of the intersection lattice $\bZ^{|\V'|}$ 
spanned by $\E' = \E\setminus\{e_u\}$, and $v\cdot v\leq -2$ for each $v\in\V'$. A simple calculation shows that 
the Wu element $W' = \sum_{v\in\V'} v$ satisfies $W'\cdot W' = W\cdot W + 1 = -|\V'|$ and $W'=-\sum_{e\in\E'} e$. 
In particular, $\E'$ is a canonical basis adapted to $\V'$. Moreover, for each $v\in\V'$ there is an 
$e\in\E'$ such that $v\cdot e=-1$. Therefore the map $\V'\to\E'$ defined in Lemma~\ref{l:coefficients} 
is surjective, hence also injective. This shows that $\V'$ satisfies all the assumptions of Lemma~\ref{l:pcsinj}. 
\end{rmk}

If, after applying a $(-2)$--contraction to a positive circular subset $\V$ satisfying the assumptions 
of Lemma~\ref{l:pcsinj}, we obtain a new positive circular subset $\V'$ which still contains a (-2)--vector whose 
connected component has more than $3$ elements, by Remark~\ref{r:-2-contr} we can apply a (-2)--contraction again, 
obtaining a set $\V''$, again satisfying the assumptions of Lemma~\ref{l:pcsinj}, and so on. 
Furthermore, at each step a $(-2)$--vector is eliminated but, in view of Definition~\ref{d:-2-contr}, 
no new $(-2)$--vector is created. Clearly, after a finite number of $(-2)$--contractions we end up with a 
circular subset $\Z\subset\bZ^{|\Z|}$ satisfying the assumptions 
of Lemma~\ref{l:pcsinj} and having the property that each $(-2)$--vector of $\Z$ belongs to 
a connected component with exactly three elements. 

In order to understand the set $\V$ we need more information about the set $\Z$. 
In the following, we shall denote by $\F\subset\bZ^{|\Z|}$ a canonical basis adapted to $\Z$. 
For each $u\in\Z$, we define 
\[
\F_u := \{f\in\F\ |\ u\cdot f\neq 0\}\subset\F.
\]
Following~\cite{Li07-1}, we consider the equivalence relation on $\Z$ generated by the relation 
given by $u\sim v$ if and only if $\F_u\cap\F_v\neq\emptyset$ and 
we call {\em irreducible components} of $\Z$ the resulting  
equivalence classes. We shall now analyze $\Z$ considering separately the two cases 
$t=2$ and $t\geq 3$, where $t$ is the parameter appearing in Equation~\eqref{e:prelim} 
(when $t=1$ the analysis is not necessary, as will be shown in the proof of Theorem~\ref{t:pcs}). 

\medskip
\noindent\underline{Suppose $t=2$.} 
By the considerations following Remark~\ref{r:-2-contr}, it is easy to see that $\Z$  
consists of some number $n$ of connected components $C_1,\ldots, C_n$, where each component $C_i$ contains three 
vectors $u_i$, $v_i$ and $w_i$ satisfying $u_i\cdot v_i = v_i\cdot w_i = w_i\cdot u_i = 1$, 
$u_i\cdot u_i =-2$, $v_i\cdot v_i = -2-a_i$ and $w_i\cdot w_i = -2 - b_i$, with $a_i, b_i\geq 1$.   

\begin{lemma}\label{l:sumofais+bis}
$\sum_{i=1}^n (a_i+b_i) = 3n$.
\end{lemma}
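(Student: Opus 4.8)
The statement $\sum_{i=1}^n (a_i+b_i) = 3n$ should be a consequence of two facts: first, a count of how the vector $W$ distributes among the canonical basis $\F$, via Equation~\eqref{e:sumofcoeff}; and second, the constraint that $\La_\Z \subset \bZ^{|\Z|}$ has finite index, so $|\Z| = |\F|$. The plan is to compute $W\cdot W$ directly from the description of $\Z$ in terms of the $C_i$, compare it with the required value $W\cdot W = -|\Z|$ (which is preserved under $(-2)$--contractions by Remark~\ref{r:-2-contr}), and read off the identity.

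First I would record $|\Z| = 3n$, since $\Z$ has $n$ connected components each with three vectors, and note $|\F| = |\Z| = 3n$ because the index is finite. Next, compute the Wu element's self--intersection using the prescribed intersection data on each $C_i = \{u_i,v_i,w_i\}$: the three mutual products contribute $2(u_i\cdot v_i + v_i\cdot w_i + w_i\cdot u_i) = 6$, and the squares contribute $u_i\cdot u_i + v_i\cdot v_i + w_i\cdot w_i = -2 + (-2-a_i) + (-2-b_i) = -6 - (a_i+b_i)$. Summing over the (mutually orthogonal) components gives
\[
W\cdot W = \sum_{i=1}^n \bigl(6 - 6 - (a_i+b_i)\bigr) = -\sum_{i=1}^n (a_i+b_i).
\]
Since $W\cdot W = -|\Z| = -3n$, we conclude $\sum_{i=1}^n (a_i+b_i) = 3n$.

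I would make sure to justify the one non--routine point: that the hypothesis $W\cdot W = -|\V|$ of Lemma~\ref{l:pcsinj} survives the passage from $\V$ to $\Z$. This is exactly the content of Remark~\ref{r:-2-contr}, which states that each $(-2)$--contraction satisfies $W'\cdot W' = W\cdot W + 1 = -|\V'|$; iterating finitely many times yields $W\cdot W = -|\Z|$ for the terminal set $\Z$. The rest is the elementary bilinear computation above, so there is no real obstacle here — the lemma is essentially a bookkeeping identity, and the only thing to be careful about is using the correct (preserved) value of the Wu self--intersection rather than that of the original $\V$.
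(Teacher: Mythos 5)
Your proof is correct, and it takes a genuinely different (if closely related) route from the paper's. The paper argues combinatorially on the contraction process itself: each connected component of $\V$ contains $x_1+x_2-2$ vectors of square $-2$, so exactly $x_1+x_2-3$ $(-2)$--contractions are applied per component before reaching $\Z$, and since each contraction raises the self--intersection of one vector of square $\leq -4$ by exactly $1$, the total ``excess'' drops from $n(x_1+x_2)$ to $n(x_1+x_2)-n(x_1+x_2-3)=3n$. You instead package all of that bookkeeping into the single invariant $W\cdot W=-|\Z|$, which Remark~\ref{r:-2-contr} guarantees is preserved under each contraction, and then expand $W\cdot W$ directly over the mutually orthogonal components $C_i$ to get $-\sum_i(a_i+b_i)$. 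Your version is arguably cleaner: it does not require counting the number of contractions or tracking which vector's square changes at each step (a point that matters, since the vector $z$ modified by a contraction need not lie in the same connected component as the eliminated $(-2)$--vector, which is precisely why the lemma only asserts the sum over all components rather than $a_i+b_i=3$ for each $i$). The trade-off is that your argument leans on Remark~\ref{r:-2-contr}, whose verification is itself the same kind of step-by-step calculation the paper performs here; the underlying arithmetic is the same, but your formulation isolates it in one reusable identity. One tiny quibble: the equality $|\F|=|\Z|$ holds simply because $\F$ is by construction a basis of the ambient $\bZ^{|\Z|}$ in which $\Z$ sits after the contractions; invoking finiteness of the index is unnecessary (and in any case you never actually use $\F$ in the computation).
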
 

\begin{proof} 
We refer to the notation of Figure~\ref{f:graph}. 
Each connected component of the original positive circular subset $\V$ contains 
$y_1 + y_2 - 2 = x_1 +x_2 - 2$ vectors of square $-2$. Therefore, the number of $(-2)$--contractions 
applied to each connected component of of $\V$ to obtain $\Z$ is $x_1 + x_2 - 3$. Each time we apply
a $(-2)$--contraction the self--intersection of some vector $w$ with $w\cdot w\leq -4$ increases by $1$. 
This shows that 
\[
\sum_{i=1}^n (a_i+b_i) = n (x_1+x_2) - n (x_1+x_2 - 3) = 3n. 
\] 
\end{proof} 

\begin{lemma}\label{l:ais+bis}
$a_i + b_i= 3$ for each $i=1,\ldots,n$. 
\end{lemma}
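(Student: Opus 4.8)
The plan is to promote Lemma~\ref{l:sumofais+bis} from an average statement into a pointwise one by producing, for each index~$i$, a lower bound $a_i+b_i\geq 3$; combined with $\sum_i(a_i+b_i)=3n$ this forces equality everywhere. So first I would fix one connected component $C_i=\{u_i,v_i,w_i\}$ of $\Z$ with $u_i\cdot u_i=-2$, $v_i\cdot v_i=-2-a_i$, $w_i\cdot w_i=-2-b_i$ and try to see what constraints the ambient embedding $\Z\subset\bZ^{|\Z|}$ of finite and odd index, together with the Wu condition $W\cdot W=-|\Z|$, impose on $a_i$ and $b_i$. The natural tool is Lemma~\ref{l:pcsinj}: applied to the $(-2)$--vector $u=u_i$ it yields $f,g\in\F$ and vectors $v,z\in\Z$ with $u_i=e_{u_i}-f$, $v=e_v-e_{u_i}-\cdots$, $w=f-\cdots$, and a third vector $z=e_z-e_{u_i}-f-g-\cdots$ with $z\cdot z\leq -4$. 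Since $C_i$ has only three elements, $v$ and $w$ here must be $v_i$ and $w_i$ (in some order), so I immediately read off $v_i\cdot v_i\leq -2$, $w_i\cdot w_i\leq -3$, and — this is the key point — the vector $z$ provided by part~(3) of the lemma lies in a \emph{different} component, yet its coordinates $e_{u_i}$, $f$, $g$ are shared with $C_i$.

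The heart of the argument is then a counting of how the canonical basis elements $e_{u_i}$ and $f$ (the two basis vectors ``attached'' to $u_i$) are distributed among the vectors of $\Z$. By part~(2) of Lemma~\ref{l:pcsinj} each $e\in\F$ is hit by exactly three vectors of $\Z$, one with coefficient $-1$ and two with coefficient $+1$ (up to sign). For $e=e_{u_i}$ these three vectors are $u_i$, $v_i$, and $z$; for $e=f$ they are $u_i$, $w_i$, and $z$ (the same $z$, by the structure in part~(3)). Writing $v_i$ and $w_i$ explicitly in the basis $\F$ and using $v_i\cdot w_i=1$, $v_i\cdot u_i=w_i\cdot u_i=1$, $v_i\cdot z\in\{0,1\}$, $w_i\cdot z\in\{0,1\}$ together with $z\cdot z\leq -4$ pins down the remaining freedom. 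Concretely I expect $v_i=e_{v_i}-e_{u_i}-(\text{a single further }\F\text{-vector})$ and similarly for $w_i$, which forces $a_i\leq 1$ and $b_i\leq 2$, i.e. $a_i=1$; and symmetry of the roles of $v_i$ and $w_i$ (or a parallel application of the lemma exploiting that $w_i$ might itself be a $(-2)$--vector in another component's contraction history) gives the complementary bound, yielding $a_i+b_i=3$ for that component.

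The main obstacle I anticipate is the bookkeeping of \emph{which} vectors of $\Z$ outside $C_i$ carry the basis elements $e_{u_i}$, $f$, $g$, and making sure the ``$-\cdots$'' tails in the expansions of $v_i$, $w_i$, $z$ cannot conspire to make $a_i$ or $b_i$ larger: one has to use the hypotheses $v\cdot v\leq-2$ for all $v$, the positivity $u\cdot v\geq 0$ for distinct $u,v$, and the exact component-sum relation $\sum_{v\in D}(v\cdot v+2)=-|D|$ to close off every escape route. Once the pointwise inequality $a_i+b_i\geq 3$ is in hand, Lemma~\ref{l:sumofais+bis} gives $\sum_i(a_i+b_i)=3n=\sum_i 3$, so each term equals~$3$ and we are done.

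\begin{proof}[Proof sketch]
Fix a component $C_i=\{u_i,v_i,w_i\}$ as above. Apply Lemma~\ref{l:pcsinj}(3) to the $(-2)$--vector $u_i$: there are $f,g\in\F$ and $v,w,z\in\Z$ with $u_i=e_{u_i}-f$, $\{v,w\}=\{v_i,w_i\}$, $v\cdot f=0$, $v=e_v-e_{u_i}-\cdots$, $w\cdot e_{u_i}=0$, $w=f-\cdots$, and $z\cdot u_i=0$, $z=e_z-e_{u_i}-f-g-\cdots$ with $z\cdot z\leq -4$. In particular $w\cdot w\leq -3$, so neither $v_i$ nor $w_i$ has square $-2$, consistent with $a_i,b_i\geq 1$. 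Using Lemma~\ref{l:pcsinj}(2) applied to $e_{u_i}$ and to $f$, the only vectors of $\Z$ hitting $e_{u_i}$ are $u_i,v,z$ and the only ones hitting $f$ are $u_i,w,z$. Expanding $v$ and $w$ in the canonical basis and imposing $v\cdot w=1$, $v\cdot z\in\{0,1\}$, $w\cdot z\in\{0,1\}$, $v\cdot u_i=w\cdot u_i=1$, and positivity against all other vectors of $C_i$ and of the $z$--component, together with $v\cdot v\leq -2$, $w\cdot w\leq -2$, forces $v=e_v-e_{u_i}-h$ for a single $h\in\F$ distinct from the previous ones and $w=f-k-k'$ for suitable $k,k'\in\F$, whence the squares of $v$ and $w$ are $-2-1$ and $-2-2$ in some order; that is, $\{a_i,b_i\}$ contains values summing to at least $3$, and in fact $a_i+b_i\geq 3$ for each $i$. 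Since $\sum_{i=1}^n(a_i+b_i)=3n$ by Lemma~\ref{l:sumofais+bis}, every inequality is an equality and $a_i+b_i=3$ for all $i$.
\end{proof}
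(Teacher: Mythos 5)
Your overall strategy is the paper's: prove the pointwise inequality $a_i+b_i\geq 3$ and then let Lemma~\ref{l:sumofais+bis} upgrade it to equality everywhere. The gap is that your argument for the pointwise bound does not close. Since $a_i,b_i\geq 1$ is part of the setup, the only case to exclude is $a_i=b_i=1$, i.e.\ (up to renaming basis elements) $u_i=f_1-f_2$, $v_i=f_2-f_3-f_4$, $w_i=f_3-f_1-f_5$. This configuration satisfies every constraint you invoke: it is consistent with Lemma~\ref{l:pcsinj}(1)--(3) applied to $u_i$ (with $w_i$ and $v_i$ in the roles of $v$ and $w$, and a vector $z=e_z-f_1-f_2-g-\cdots$ of square $\leq -4$ sitting in another component), with $v_i\cdot w_i=1$, with positivity, and with the component-sum relation. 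So no amount of expanding $v_i$ and $w_i$ against these local conditions can force $a_i+b_i\geq 3$. There is also an internal inconsistency in your conclusion: the forms you claim are forced, $v=e_v-e_{u_i}-h$ and $w=f-k-k'$, both have square $-3$, which is exactly the excluded case $a_i+b_i=2$, not squares $-3$ and $-4$ as you assert.

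What is actually needed, and what the paper does, is a global argument. One takes the unique vectors of $\Z\setminus C_i$ carrying $f_1$ and $f_4$ and splits into two cases according to how their supports meet $f_3$ and $f_5$. One case dies locally, but in the surviving case those two vectors have the explicit forms $f_5-f_1-f_2-f_4-f_6-\cdots$ and $f_4-f_3-f_5-f_6-\cdots$, and the contradiction is obtained by deleting $C_i$ and modifying these two vectors so as to produce a positive circular subset $\Z'$ with $|\Z'|=|\Z|-3$ lying in the span of only $|\Z|-4$ basis elements, while each component of $\Z'$ still contains a vector of square $\leq -3$; this violates the full-rank (negative definiteness) statement of Lemma~\ref{l:negdef}. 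That rank argument is the missing idea; without it the case $a_i=b_i=1$ cannot be ruled out, and your proof of the key inequality collapses.
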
 

\begin{proof} 
In view of Lemma~\ref{l:sumofais+bis}, it suffices to show that $a_i+b_i\geq 3$ for each $i=1,\ldots, n$. We do this 
by showing that the case $a_i+b_i=2$ cannot occur. Arguing by contradiction, suppose that 
$a_i + b_i = 2$ for some $i$. Then, $a_i=b_i=1$ and, up to renaming the elements of $\F$ we must have 
$u_i = f_1-f_2$, $v_i = f_2-f_3-f_4$ and $w_i = f_3-f_1-f_5$ for some $f_1,\ldots, f_5\in\F$. 
Let $v$ be the unique vector of $\F\setminus C_i$ such that $v\cdot f_1 = 1$. Since $v\cdot u_i=0$, 
we must have $v = e_v -f_1-f_2-\cdots$. Since $v\cdot w_i = 0$, we have either (i) $v = f_5 - f_1 - f_2-\cdots$ or 
(ii) $v = e_v -f_1 - f_2 - f_3-\cdots$. In Case (ii), consider the unique vector $w\in\Z\setminus C_i$ 
with $e_w = f_4$. Since $w\cdot v_i=0$, $\F_w$ should contain either $f_2$ or $f_3$. But $f_2$ already 
appears in $u_i$, $v_i$ and $v$, and $f_3$ in $v_i$, $w_i$ and $v$. The only possibility is that $w=v$ 
and $v=f_4-f_1-f_2-f_3-\cdots$, but this is incompatible with $v\cdot v_i=0$, therefore Case (ii) cannot occur. 
In Case (i), since $v\cdot v_i=0$ we must have $v = f_5 - f_1-f_2-f_4-\cdots$. Let $w\in\Z\setminus C_i$ be 
the unique vector with $e_w = f_4$. Then, since $w\cdot v_i=0$ we must have $w=f_4-f_3-\cdots$ and 
since $w\cdot w_i = 0$ we must have $w = f_4-f_3-f_5-\cdots$. Since $v\cdot w\leq 1$, we must have 
$v = f_5-f_1-f_2-f_4-f_6-\cdots$ and $w=f_4-f_3-f_5-f_6-\cdots$ for some $f_6\in\F$. Now consider 
the new set $\Z'$ obtained from $\Z$ by eliminating the vectors $u_i$, $v_i$ and $w_i$ and replacing $v$ 
with $v' = v+f_1+f_2+f_4$ and $w$ with $w' = w+f_3-f_4$. By construction, the set $\Z'$ is a positive circular subset 
of the span of $\F\setminus\{f_1,f_2,f_3,f_4\}$, and $|\Z'|=|\Z|-3$. Moreover, each connected component of $\Z'$ 
contains a vector with square $\leq -3$. This contradicts Lemma~\ref{l:negdef}, showing that Case (i) 
cannot occur either.
\end{proof} 

\begin{prop}\label{p:Zcaset=2}
When $t=2$ the set $\Z$ has an even number of connected components. Indeed, each 
irreducible component of $\Z$ is the union of two connected components $C_1$ and $C_2$, 
with 
\[
C_1 = \{f_1-f_2, f_2-f_3-f_4, f_4-f_5-f_6-f_1\}\quad\text{and}\quad C_2 = \{f_3-f_4-f_6, f_6-f_5, f_5-f_1-f_2-f_3\}
\]
for some $f_1,\ldots, f_6\in\F$.
\end{prop}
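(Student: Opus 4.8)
The plan is to work entirely within a single irreducible component $I$ of $\Z$ and show that it is forced to have the stated shape. By the discussion following Remark~\ref{r:-2-contr} together with Lemma~\ref{l:ais+bis}, every connected component $C_i$ of $\Z$ is a triangle with self-intersection string $(-2,-2-a_i,-2-b_i)$ where $a_i+b_i=3$, so up to relabelling $\{a_i,b_i\}=\{1,2\}$. Thus each triangle $C_i$ can, after renaming the elements of $\F$, be written in the normal form
\[
u_i = f-g,\qquad v_i = g - h - k,\qquad w_i = h - f - k - (\cdots),
\]
where $v_i\cdot v_i=-3$, $w_i\cdot w_i=-4$, and the ``$(\cdots)$'' in $w_i$ accounts for the extra $-1$ coordinate. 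The first step is to pin down, using Lemma~\ref{l:pcsinj}(2) and~(3) applied to $u_i$ (the $(-2)$--vector of $C_i$), exactly which basis vectors of $\F$ appear in $C_i$ and how often: since $u_i=e_{u_i}-f$, the vector $e_{u_i}$ occurs in $u_i$ and in the unique neighbour $v$ of $u_i$ with $v\cdot f=0$, and must occur a third time in some $z\in\Z$ with $z\cdot u_i=0$, $z=e_z-e_{u_i}-f-g'-\cdots$ with $z\cdot z<-3$; and $f$ itself occurs in $u_i$, in the other neighbour $w$ of $u_i$ (with $w\cdot e_{u_i}=0$), and a third time in that same $z$ or another vector. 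The key point is to track these ``third occurrences'': each basis vector appears with coefficient sum $1$ (Equation~\eqref{e:sumofcoeff}), and since all coefficients in a positive circular subset lie in $\{-1,0,1\}$ with exactly one $-1$ (Lemma~\ref{l:pcsinj}(1)--(2)), each $f\in\F$ is hit by exactly one vector negatively and two positively.

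The main step is then a closure argument: starting from one triangle $C_1=\{f_1-f_2,\ f_2-f_3-f_4,\ f_4-f_5-f_6-f_1\}$ (the normal form for the $a_i=1$, $b_i=2$ case, with the three ``new'' basis vectors $f_2,f_4$ being the $e_{u_i},\,e_{v_i}$ of $C_1$ and $f_3,f_5,f_6$ being shared), I follow where the remaining occurrences of $f_1,\dots,f_6$ must go. The vector hitting $f_3$ negatively that is not in $C_1$ must, by positivity and by being forced to avoid hitting $C_1$'s vectors except as allowed, have the form $f_3-f_4-f_6-\cdots$; the vector hitting $f_6$ negatively not in $C_1$ must be $f_6-f_5-\cdots$; and the vector hitting $f_5$ negatively must be $f_5-f_1-f_2-f_3-\cdots$. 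Checking self-intersections ($-3$, $-2$, $-4$ respectively, forced by Lemma~\ref{l:ais+bis} once we know these three vectors form a triangle) and checking that all pairwise products vanish where they should forces these three vectors to be exactly $\{f_3-f_4-f_6,\ f_6-f_5,\ f_5-f_1-f_2-f_3\}$ with no further terms — this is precisely $C_2$, and all of $f_1,\dots,f_6$ now have their coefficient sums equal to $1$, so the irreducible component closes up after exactly these two triangles. In particular no basis vector of $\F$ used by $C_1\cup C_2$ is hit by anything outside, so $C_1\cup C_2$ is a whole irreducible component, and since irreducible components partition $\Z$ into blocks each consisting of two connected components, $\Z$ has an even number of connected components.

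The hard part will be the bookkeeping in the closure argument: ruling out all the spurious branches where a ``third occurrence'' of some $f_j$ lands on an unexpected vector or where a vector acquires an extra $\pm1$ coordinate that would make its square too negative. The tools for this are uniform — Equation~\eqref{e:sumofcoeff}, the coefficient constraints of Lemma~\ref{l:pcsinj}(1)--(2), the circularity condition (each vector hits exactly two others, and here with product $+1$), the self-intersection budget $a_i+b_i=3$ from Lemma~\ref{l:ais+bis}, and Lemma~\ref{l:negdef} to forbid a connected component all of whose vectors have square $\le -3$ (which would happen if one tried to ``contract away'' $C_1\cup C_2$ prematurely, exactly as in the proof of Lemma~\ref{l:ais+bis}). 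Each alternative gets eliminated by one of these, but there are several cases to enumerate; the argument is essentially a finite search, and the only real subtlety is organising it so that the two-triangle block emerges cleanly and one sees that nothing leaks out of it.
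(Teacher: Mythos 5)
Your plan follows essentially the same route as the paper's proof: put one triangle into the normal form $C_1=\{f_1-f_2,\ f_2-f_3-f_4,\ f_4-f_5-f_6-f_1\}$ (forced by Lemma~\ref{l:ais+bis}), then use Equation~\eqref{e:sumofcoeff} together with Lemma~\ref{l:pcsinj}(2) and the orthogonality/adjacency constraints to force the three remaining vectors meeting $f_1,\dots,f_6$ to be exactly $C_2$, after which the irreducible component closes up and the parity statement follows. The finite search you defer is short in practice (the paper needs only two binary case splits: one for the vector containing $-f_1$ and one for the vector $v$ with $e_v=f_3$); the only slip is that your generic normal form $u_i=f-g$, $v_i=g-h-k$, $w_i=h-f-k-\cdots$ gives $v_i\cdot w_i=0$ rather than $1$ (the letter $k$ should not appear in both), though the concrete $C_1$ you actually work with is correct.
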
 

\begin{proof} 
Let $S\subset\Z$ be an irreducible component, and let $C_1\subset S$ be one of its connected components. 
In view of Lemma~\ref{l:ais+bis}, up to permuting the elements of $\F$ we may assume that 
\[
C_1 = \{u_1=f_1-f_2, v_1=f_2-f_3-f_4, w_1=f_4-f_5-f_6-f_1\}. 
\]
Let $w_2\in S\setminus C_1$ be the unique vector with $w_2\cdot f_1=1$. Since $w_2\cdot u_1=w_2\cdot v_1=0$, 
we have $w_2\cdot w_2=-4$ and either (i) $w_2 = e_{w_2} - f_1- f_2 - f_3$ or (ii) 
$w_2 = e_{w_2} - f_1-f_2-f_4$. Case (ii) cannot occur, because if it did $f_2$ and $f_4$ would already have 
been used three times each, therefore the unique vector $v_2\in S\setminus C_1$ such that 
$e_{v_2} = f_3$ could not satisfy $v_2\cdot v_1=0$.

In Case (i), since $w_2\cdot w_1=0$, up to swapping $f_5$ and $f_6$ 
we have $w_2 = f_5 - f_1 - f_2 -f_3$. Let $v_2\in S\setminus C_1$ be the unique vector with $e_{v_2} = f_3$.
Since $v_2\cdot v_1=0$, we have $v_2 = f_3 - f_4 - \cdots$ and since $v_2\cdot w_1=0$ we have either 
$v_2 = f_3-f_4-f_5-\cdots$ or $v_2 = f_3-f_4-f_6-\cdots$. The first case is not possible because it would 
imply $v_2\cdot w_2=2$, therefore the second case occurs and $v_2\cdot w_2=1$, which implies that 
$v_2$ belongs to the same connected component as $w_2$, hence $v_2 = f_3-f_4-f_6$. The third vector $u_2$ 
of the connected component of $S$ containing $v_2$ and $w_2$ must share a vector with both $v_2$ 
and $w_2$. Since $f_1,\ldots, f_4$ have already been used three times, this forces $u_2 = f_6-f_5$. 
Therefore, $S=C_1\cup C_2$ where $C_2 = \{u_2, v_2, w_2\}$ is a connected component of the stated form. 
\end{proof} 

\medskip
\noindent\underline{Suppose $t\geq 3$.} 
By the considerations following Remark~\ref{r:-2-contr}, it is easy to see that $\Z$  
consists of some number $n$ of connected components $C_1,\ldots, C_n$, where each component $C_i$ consists of 
$t$ vectors of square $\leq -3$.

\begin{lemma}\label{l:zeta}
Each $z\in\Z$ satisfies $z\cdot z=-3$.
\end{lemma}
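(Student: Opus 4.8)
Lemma~\ref{l:zeta} asserts that in the $t\geq 3$ case every vector of the fully $(-2)$-contracted set $\Z$ has square exactly $-3$. The plan is to combine a counting argument, analogous to Lemma~\ref{l:ais+bis}, with a local structural analysis forbidding a vector of square $\leq-4$. First I would record the bookkeeping: each connected component of the original positive circular subset $\V$ contains $\sum_i y_i - 1 = \sum_i x_i - 1$ vectors of square $-2$, and each $(-2)$-contraction removes one $(-2)$-vector while increasing the square of exactly one vector of square $\leq-4$ by one (and, crucially, creating no new $(-2)$-vector). Since each component of $\Z$ has $t$ vectors, the number of contractions per component is $(\sum_i x_i - 1) - (t - 1) = \sum_i x_i - t$. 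Each component of $\V$ started with total defect $\sum_{v\in D}(-v\cdot v - 2) = \sum_i x_i$, so after contraction the total defect of each component $C_i\subset\Z$ is $\sum_i x_i - (\sum_i x_i - t) = t$. Hence $\sum_{z\in C_i}(-z\cdot z - 2) = t$, and since $C_i$ has $t$ vectors each of square $\leq-3$, the average defect is exactly $1$: this already forces $z\cdot z = -3$ for all $z$ once we know each component contributes total defect exactly $t$ with $t$ summands each $\geq 1$.

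**Why that is not quite enough, and the real work.** The counting above shows that if \emph{every} $z\in\Z$ has square $\leq-3$ (which is part of the hypothesis that $\Z$ has been fully $(-2)$-contracted) and each component has $t$ elements, then $\sum_{z\in C_i}(-z\cdot z-2)=t$ forces each term to equal $1$, i.e.\ $z\cdot z=-3$. So in fact the lemma follows \emph{immediately} from the defect computation, provided one has correctly verified that (a) each component of $\Z$ has exactly $t$ vectors and (b) the total defect per component is exactly $t$. Both (a) and (b) are consequences of the remarks following Remark~\ref{r:-2-contr}: the $(-2)$-contraction operation decreases $|\V|$ by $3$, decreases the number of $(-2)$-vectors appropriately, preserves the Wu-element identity $W'\cdot W' = -|\V'|$, and preserves connectedness-type of components. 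So the plan is: (i) set up the defect counting carefully for the $t\geq 3$ case, mirroring Lemma~\ref{l:sumofais+bis}; (ii) invoke the structural facts from Remark~\ref{r:-2-contr} that each component of $\Z$ indeed has $t$ vectors of square $\leq-3$; (iii) conclude by the pigeonhole-style equality that each square is $-3$.

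**The expected obstacle.** The main subtlety — and where I would spend the most care — is verifying claim (a), that after full $(-2)$-contraction each connected component of $\Z$ still has exactly $t$ vertices. This needs the observation that a $(-2)$-contraction applied to a component $C$ with $|C|>3$ replaces three vectors $u,v,z\in C$ by two vectors $v',z'$, hence reduces $|C|$ by one; so a component that originally had $\sum_i y_i - 1 + 3 = \sum_i x_i + 2$ vectors (the $t$ non-$(-2)$-vectors plus the $\sum_i x_i - 1$ $(-2)$-vectors, wait — one must recount: the component of $\Ga$ has $\sum_i y_i$ vertices total, of which $\sum_i(y_i - 1) = \sum_i y_i - t$ have square $-2$ and $t$ have square $-x_i - 2 \leq -3$) gets reduced, one unit per contraction, exactly $\sum_i y_i - t - (t - 1) = \sum_i x_i - t$ times down to $t$ vectors, all of square $\leq -3$. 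Here I am using $\sum_i y_i = \sum_i x_i + 4$ for the $d=1$ case, or more precisely whatever relation is in force; care is needed to track the contraction count correctly against the hypothesis $\sum_i x_i = \sum_i y_i$ (Family 3, $d=0$) versus the $d=1$ situation, since Lemma~\ref{l:zeta} sits in the positive ($d=0$) analysis. With that accounting pinned down, the proof is a one-line pigeonhole argument:
\[
t = \sum_{z\in C_i}(-z\cdot z - 2), \qquad -z\cdot z - 2 \geq 1 \text{ for all } z\in C_i, \qquad |C_i| = t,
\]
forces $-z\cdot z - 2 = 1$, i.e.\ $z\cdot z = -3$, for every $z\in\Z$.
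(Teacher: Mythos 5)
Your argument is correct, but it is genuinely different from the one in the paper. You prove the lemma \emph{extrinsically}, by bookkeeping the total defect $\sum_{z}(-z\cdot z-2)$ through the $(-2)$--contraction process: each component of $\V$ starts with defect $\sum_i x_i$ and with $\sum_i y_i - t$ vectors of square $-2$; each contraction kills one $(-2)$--vector, shrinks the component by one, and (by Lemma~\ref{l:pcsinj}(3), since $z=e_z-e_u-f-g-\cdots$ has square $\leq -4$) lowers the defect by exactly one; so after the $\sum_i y_i - t=\sum_i x_i - t$ contractions (using $d=0$, hence $\sum_i x_i=\sum_i y_i$) each component of $\Z$ has $t$ vectors, defect $t$, and each summand $\geq 1$, forcing $z\cdot z=-3$. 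This is exactly the strategy the paper uses for the $t=2$ case in Lemmas~\ref{l:sumofais+bis} and~\ref{l:ais+bis}, so it is entirely in the spirit of the text. The paper's own proof of Lemma~\ref{l:zeta} is instead \emph{intrinsic} to $\Z$: writing $z=-\sum_f(z\cdot f)f$ and using Lemma~\ref{l:pcsinj}(2) (each $f\in\F$ is hit by exactly three elements of $\Z$, each with coefficient $\pm1$) gives $-\sum_z z\cdot z=\sum_f\sum_z(z\cdot f)^2=3|\F|=3|\Z|$, and $z\cdot z\leq -3$ does the rest. The paper's route is shorter and needs no history of the contraction; yours has the merit of unifying the $t\geq 3$ case with the $t=2$ computation already in the paper. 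One small blemish: your expression ``$\sum_i y_i - t - (t-1)=\sum_i x_i - t$'' for the number of contractions is wrong as written (the left side is $\sum_i y_i-2t+1$); the correct count is simply $\sum_i y_i - t$, which equals $\sum_i x_i - t$ here, and that is the value you actually use, so the conclusion stands.
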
 

\begin{proof}
For each $z\in\Z$ we have $z=-\sum_{f\in\F} (z\cdot f) f$. By Lemma~\ref{l:pcsinj}(2) we have 
\[
3|\Z| \leq -\sum_{z\in\Z} z\cdot z = \sum_{z\in\Z} z\cdot \sum_{f\in\F} (z\cdot f) f = 
\sum_{f\in\F} \sum_{z\in\Z} (z\cdot f)^2 = 3 |\F| = 3|\Z|.
\] 
Therefore $3|\Z| = -\sum_{z\in\Z} z\cdot z$. Since $z\cdot z\leq -3$ for each $z\in\Z$, we must have 
$z\cdot z = -3$ for every $z\in\Z$. 
\end{proof}

Observe that, in view of Lemmas~\ref{l:pcsinj} and~\ref{l:zeta}, for each $u\in\Z$ there are distinct 
elements $f_1,f_2,f_3\in\F$ such that $u=f_1-f_2-f_3$. In particular, $\F_u = \{f_1,f_2,f_3\}$.

\begin{lemma}\label{l:intersectingelmsofZ}
Let $u,v\in\Z$ with $u\cdot v=1$. Then, up to swapping $u$ and $v$, one of the following holds:
\begin{enumerate}
\item
$|\F_u\cap\F_v|=1$ and there are five distinct 
elements $f_1,\ldots,f_5\in\F$ such that $u=f_1-f_2-f_3$ and $v=f_3-f_4-f_5$;
\item
$|\F_u\cap\F_v|=3$ and there are three distinct elements $f_1,f_2,f_3\in\F$ such that 
$u=f_1-f_2-f_3$, $v=f_3-f_1-f_2$.
\end{enumerate}
Moreover, if $u$ and $v$ belong to a connected component $C\subset\F$ with 
cardinality $|C|>3$ then Case (1) holds.   
\end{lemma}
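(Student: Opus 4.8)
The plan is to analyze the intersection pattern $u \cdot v = 1$ using the decomposition $u = f_1 - f_2 - f_3$ and $v$ expressed in the canonical basis $\F$, exploiting Lemma~\ref{l:pcsinj}(2) which controls how many vectors of $\Z$ hit each $f \in \F$. Since $u \cdot v = 1 > 0$ and all coefficients of $u,v$ in the canonical basis are in $\{-1,0,1\}$ (because $u\cdot u = v\cdot v = -3$, so $u,v \in \W$ with exactly one coefficient possibly equal to $-1$ or $2$; but $z\cdot z = -3$ rules out a coefficient of $2$, so all nonzero coefficients are $\pm 1$), we have $u\cdot v = -\sum_{e} (u\cdot e)(v\cdot e)$, and the positivity forces the overlap $\F_u \cap \F_v$ to be nonempty with a specific sign structure. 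First I would write $v = -\sum_f (v\cdot f) f$ and note $v$ also has the form $g_1 - g_2 - g_3$ for distinct $g_i \in \F$. Computing $u\cdot v = 1$ in terms of the overlaps between $\{f_1,f_2,f_3\}$ (with signs $+,-,-$) and $\{g_1,g_2,g_3\}$ (with signs $+,-,-$): each shared basis element contributes $-(\text{product of signs})$, so a shared element contributes $-1$ if the signs agree (both $+$ or both $-$) and $+1$ if they disagree. Thus $u\cdot v = 1$ means the signed overlaps sum to $1$.

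The combinatorial case analysis then runs as follows. If $|\F_u \cap \F_v| = 1$, the single shared element must contribute $+1$, i.e.\ it is the ``$+$'' element of one and a ``$-$'' element of the other; up to swapping $u$ and $v$ this gives $u = f_1 - f_2 - f_3$, $v = f_3 - f_4 - f_5$ with all five distinct, which is Case (1). If $|\F_u \cap \F_v| = 2$, the two contributions must sum to $1$, impossible since each is $\pm 1$ (sum is $-2$, $0$, or $2$) — so this case is excluded. If $|\F_u \cap \F_v| = 3$ then $\F_u = \F_v = \{f_1,f_2,f_3\}$; the three contributions sum to $1$, which forces exactly two agreements and one disagreement, i.e.\ the ``$+$'' of $u$ is a ``$-$'' of $v$ and vice versa while the remaining shared ``$-$''s coincide. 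Up to relabeling this yields $u = f_1 - f_2 - f_3$, $v = f_3 - f_1 - f_2$, which is Case (2). I would present these three subcases cleanly, checking in the excluded $|\F_u\cap\F_v|=2$ case that no configuration gives $u\cdot v = 1$.

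For the final assertion, suppose $u, v$ lie in a connected component $C$ with $|C| > 3$ and, for contradiction, that Case (2) holds: $\F_u = \F_v = \{f_1,f_2,f_3\}$. By Lemma~\ref{l:pcsinj}(2), for each of $f_1, f_2, f_3$ the set of vectors in $\Z$ hitting it has exactly three elements, and $u$ and $v$ already account for two of them (both hit all three of $f_1, f_2, f_3$). Let $C = \{u, v, z_1, \ldots\}$; since $|C| > 3$, there is a third vector $w \in C$ adjacent to $u$ or $v$ in the cyclic structure of $C$, say $w \cdot u = 1$. Then $w$ must share a basis element with $u$, hence $\F_w \cap \{f_1,f_2,f_3\} \neq \emptyset$; say $f_i \in \F_w$. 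But then $f_i$ is hit by $u$, $v$, and $w$ — three vectors — so by Lemma~\ref{l:pcsinj}(2) no further vector of $\Z$ hits $f_i$. Tracing through which basis elements remain available to the other vectors of $C$ forces $C = \{u, v, w\}$ of total size $3$ (one shows $w$ can only use $f_i$ among $\F_u$, and then the cyclic closure of $C$ must return to $v$ immediately), contradicting $|C| > 3$. The main obstacle I anticipate is this last step: making the ``$w$ shares exactly one $f_i$, and then the cycle closes up in three vertices'' argument airtight, since one must carefully rule out a longer cycle that re-uses $f_i$'s neighbours without violating the multiplicity-three constraint; the cleanest route is probably to count, over the component $C$, the total multiplicity $\sum_{f} |\{z \in C : z\cdot f \neq 0\}|$ against $3|C|$ using Lemma~\ref{l:zeta}, and show the Case~(2) configuration already exhausts the budget for $f_1, f_2, f_3$ in a way incompatible with $C$ being connected of size $> 3$.
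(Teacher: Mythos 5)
Your classification into Cases (1) and (2) via the parity of $|\F_u\cap\F_v|$ is correct and is essentially the paper's own argument: all coefficients lie in $\{-1,0,1\}$, so each shared basis element contributes $\pm1$ to $u\cdot v$, whence $u\cdot v=1$ forces $|\F_u\cap\F_v|$ odd, i.e.\ equal to $1$ or $3$, and the sign bookkeeping yields the two normal forms. (One small slip: in the $|\F_u\cap\F_v|=3$ case you need \emph{two} sign disagreements and \emph{one} agreement, not the reverse; the configuration you actually describe, and the conclusion $v=f_3-f_1-f_2$, are nevertheless correct.)

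The gap is in the final assertion, and it is genuine: your argument stops at ``tracing through which basis elements remain available \ldots forces $C=\{u,v,w\}$,'' which is exactly the step that needs proving, and the multiplicity-counting alternative you sketch does not by itself exclude a longer component --- the budget of Lemma~\ref{l:pcsinj}(2) only tells you that no \emph{fourth} vector hits $f_1,f_2,f_3$; it does not immediately prevent $C$ from continuing through fresh basis elements. The missing observation is a one-line parity argument that makes Lemma~\ref{l:pcsinj}(2) unnecessary here. Suppose Case~(2) held, so $\F_u=\F_v$. Let $w\neq v$ be the other vector with $w\cdot u=1$. Since all coefficients are $0$ or $\pm1$, $w\cdot u$ is a sum of $|\F_w\cap\F_u|$ terms each equal to $\pm1$, so $|\F_w\cap\F_u|$ is odd; because $\F_u=\F_v$, the number $|\F_w\cap\F_v|$ is that same odd number, hence $w\cdot v$ is odd, hence nonzero, hence equal to $1$. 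So $v$'s two neighbours are $u$ and $w$, $u$'s are $v$ and $w$, and $\{u,v,w\}$ is closed under the adjacency relation, forcing $|C|=3$. The paper runs the identical parity argument in contrapositive form: when $|C|>3$ one can choose $w$ with $w\cdot u=1$ and $w\cdot v=0$, and then $|\F_w\cap\F_u|$ is odd while $|\F_w\cap\F_v|$ is even, so $\F_u\neq\F_v$ and Case~(2) cannot occur.
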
 

\begin{proof} 
Since $u\cdot f, v\cdot f\in\{-1,0,1\}$ for each $f\in\F$, $u\cdot v=1$ implies that 
$|\F_u\cap\F_v|$ is either $1$ or $3$. In the latter case, up to swapping $u$ and $v$ we 
necessarily have $u=f_1-f_2-f_3$ and $v=f_3-f_1-f_2$ for some distinct $f_1,f_2,f_3\in\F$, 
hence (2) holds. In the first case, up to swapping $u$ and $v$ we must have $\F_u\cap\F_v=\{f_3\}$,  
$u=f_1-f_2-f_3$ and $v=f_3-f_4-f_5$ for some distinct $f_1,\ldots,f_5\in\F$. Therefore, (1) holds. 
If $u$ and $v$ belong to a connected component $C$ with $|C|>3$, there exists an element $w\in C$ such that 
$w\cdot u=1$ and $w\cdot v=0$. Then, since $|\F_w\cap\F_u|$ is odd and $|\F_w\cap\F_v|$ is even, 
we must have $\F_u\neq\F_v$, and only Case (1) can occur.  
\end{proof} 

\begin{lemma}\label{l:ortogonalelmsofZ}
Let $u,v\in\Z$ with $u\cdot v=0$ and $\F_u\cap\F_v\neq\emptyset$. Then, up to swapping $u$ and $v$ 
the following hold: 
\begin{enumerate}
\item
there are four distinct elements $f_1,f_2,f_3,f_4\in\F$ such that $u=f_1-f_2-f_3$ and $v=f_2-f_3-f_4$;
\item 
let $C_u$ and $C_v$ be the connected components of $\Z$ containing $u$ and $v$, and suppose $C_u\neq C_v$.
If $|C_u|=3$, then $|C_v|=3$ and one of the following holds: 
\begin{itemize} 
\item[(a)]
there exist $f_1,\ldots, f_6\in\F$ such that 
\[
C_u=\{f_1-f_2-f_3, f_3-f_4-f_5, f_5-f_6-f_1\},\quad
C_v=\{f_2-f_3-f_4, f_4-f_5-f_6, f_6-f_1-f_2\};
\]
\item[(b)]
there exist $f_1,\ldots, f_8\in\F$ such that 
\[
C_u=\{f_1-f_2-f_3, f_5-f_6-f_1, f_6-f_5-f_1\},\quad
C_v=\{f_4-f_7-f_8, f_2-f_3-f_4, f_3-f_2-f_4\}.
\]
\end{itemize} 
\end{enumerate}
\end{lemma}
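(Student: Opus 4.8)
The plan is to prove~(1) by a short computation with coefficients and~(2) by first determining the connected component $C_u$ and then reconstructing $C_v$. Throughout I use the facts available in this subsection: by Lemmas~\ref{l:pcsinj} and~\ref{l:zeta} every element of $\Z$ has the form $g_1-g_2-g_3$ with $g_1,g_2,g_3\in\F$ distinct; by Lemma~\ref{l:pcsinj}(2) each $f\in\F$ occurs in exactly three elements of $\Z$, once as the leading term and twice as a subtracted term; two elements of $\Z$ with a common element of $\F$ lie in the same connected component; and, since $t\geq3$ and all connected components of $\Z$ have exactly $t$ elements, the hypothesis $|C_u|=3$ forces $t=3$, so $|C_v|=3$ automatically.

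For~(1): expanding in a canonical basis adapted to $\Z$ one has $u\cdot v=-\sum_{f\in\F_u\cap\F_v}(u\cdot f)(v\cdot f)$, and each summand is $\pm1$; hence $u\cdot v=0$ forces $|\F_u\cap\F_v|$ to be even, and being nonempty and at most $3$ it equals $2$. Writing $\F_u\cap\F_v=\{a,b\}$, the products $(u\cdot a)(v\cdot a)$ and $(u\cdot b)(v\cdot b)$ are $+1$ and $-1$ in some order; the $+1$ one cannot come from a vector leading in both $u$ and $v$, since that would force the other shared vector to be leading in exactly one of them, i.e.\ a second leading term of $u$ or of $v$. So one shared vector is subtracted in both and the other is leading in exactly one; renaming the elements of $\F$ and, if necessary, exchanging $u$ and $v$ so that this last vector is the leading term of $v$, we obtain $u=f_1-f_2-f_3$ and $v=f_2-f_3-f_4$ with $f_1,\dots,f_4$ distinct.

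For~(2): keep this normalisation and write $C_u=\{u,u',u''\}$ with $u\cdot u'=u'\cdot u''=u''\cdot u=1$. Apply Lemma~\ref{l:intersectingelmsofZ} to the three edges. An edge through $u$ cannot be of type~(2): if $\{u,u'\}$ were, then $\F_{u'}=\F_u$, and as $f_2$'s only leading occurrence is $v\notin C_u$ this forces $u'=f_3-f_1-f_2$, whereupon $f_2$ and $f_3$ each occur three times in $\{u,u',v\}$ and $u''$ can then share only $f_1$ with $u'$, so $u''\cdot u'=-1\neq1$. So either all three edges are of type~(1) (Case A) or the unique type~(2) edge is $\{u',u''\}$ (Case B). In Case A the vectors $u$ shares with $u'$ and with $u''$ are distinct, neither is $f_2$ (same reason), and using that $u',u''$ cannot share two vectors with $u$ and cannot hit $v$ one finds $u'=f_3-f_4-f_5$ and $u''=f_5-f_6-f_1$ with $f_5,f_6$ new, the $C_u$ of~(a). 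In Case B the type~(2) pair $\{u',u''\}$ shares with $u$ exactly the vector subtracted in both of its members, which by a dot‑product check must be the leading term $f_1$ of $u$, and its other two vectors are new: $u'=f_5-f_6-f_1$, $u''=f_6-f_5-f_1$, the $C_u$ of~(b).

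It then remains to reconstruct $C_v$, and I expect this to be the main obstacle. The occurrence counts of Lemma~\ref{l:pcsinj}(2) have already saturated several of the $f_i$ inside $C_u$; one tracks where the remaining occurrences of $f_2,f_3,f_4$ can lie, using systematically that an element of $C_u$ cannot share an element of $\F$ with an element of $C_v$ (as $C_u\neq C_v$) and that $v\cdot w\in\{-1,0,1\}$ by Definition~\ref{d:circular}. In Case A this eliminates every placement except $C_v=\{f_2-f_3-f_4,\,f_4-f_5-f_6,\,f_6-f_1-f_2\}$. In Case B the third occurrence of $f_3$ is forced to be $f_3-f_2-f_4$ — any other choice would either reconnect $C_v$ to the type~(2) pair of $C_u$ or produce an intersection number equal to $2$ — so $\{v,f_3-f_2-f_4\}$ is a type~(2) pair, and then the third element of $C_v$ must attach through $f_4$ with two new vectors, giving $C_v=\{f_4-f_7-f_8,\,f_2-f_3-f_4,\,f_3-f_2-f_4\}$. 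The delicate point in both cases is the elimination of spurious branches, which is precisely where the "no shared vectors across components" and "$|v\cdot w|\leq1$" constraints do the work and force the bifurcation into~(a) and~(b).
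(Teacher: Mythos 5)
Part~(1) of your argument and your determination of $C_u$ (the split into your Cases A and B, which correspond to the paper's conclusions (a) and (b)) are essentially correct and run along the same lines as the paper's proof. The problem is the reconstruction of $C_v$ --- the step you yourself flag as the delicate one --- which rests on a principle that is false: you assert that ``two elements of $\Z$ with a common element of $\F$ lie in the same connected component'', and later use it in the form ``an element of $C_u$ cannot share an element of $\F$ with an element of $C_v$ (as $C_u\neq C_v$)''. This contradicts the hypothesis of part~(2) itself ($u\in C_u$ and $v\in C_v$ share $f_2$ and $f_3$ while $C_u\neq C_v$), and it contradicts the conclusion you are aiming for: in case~(a), $f_3-f_4-f_5\in C_u$ and $f_2-f_3-f_4\in C_v$ share both $f_3$ and $f_4$, and $f_6-f_1-f_2\in C_v$ shares $f_1$ with two elements of $C_u$. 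So the elimination you describe would rule out the correct placement of $C_v$ rather than single it out. You appear to be conflating connected components (defined via $w\cdot w'\neq 0$) with the paper's \emph{irreducible} components (defined via $\F_w\cap\F_{w'}\neq\emptyset$); shared support is precisely what does \emph{not} separate connected components here.

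The correct constraint --- which you do use when pinning down $C_u$ (``cannot hit $v$'') but then abandon --- is orthogonality: for $w\in C_u$ and $w'\in C_v$ one has $w\cdot w'=0$, and by your own part~(1) this means $\F_w\cap\F_{w'}$ is either empty or consists of exactly two elements arranged as $w=g_1-g_2-g_3$, $w'=g_2-g_3-g_4$. The paper reconstructs $C_v$ by combining this with the count from Lemma~\ref{l:pcsinj}(2) (each $f\in\F$ is the leading term of exactly one element of $\Z$ and a subtracted term of exactly two) and with positivity of $\Z$: for instance, in case~(a) one locates the vector $w$ with $w\cdot f_2=1$, deduces $w\cdot f_1=1$ from $w\cdot u=0$, then $w=f_6-f_1-f_2$ from $w\cdot u'=0$, and continues until $C_v$ is exhausted. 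Until the $C_v$ step is redone with these tools, part~(2) is not proved.
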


\begin{proof} 
(1) Clearly $\F_u\cap\F_v$ contains two elements, say $f_2$ and $f_3$, one of which, say $f_2$, must 
satisfy $(f_2\cdot u, f_2\cdot v)\in\{(-1,1), (1,-1)\}$. Up to swapping $u$ and $v$ we may assume 
that $f_2\cdot u = 1$ and $f_2\cdot v=-1$. Then, $u = f_1 - f_2 - f_3$ for 
some $f_1, f_3\in\F$, and necessarily $v = f_2 - f_3 - f_4$ for some $f_4\in\F$, with $f_1, f_2, f_3$ and $f_4$ 
pairwise distinct. 

(2) Let $u',u''\in C_u$ be the two elements adjacent to $u$, i.e.~such that $u'\cdot u=u''\cdot u=1$. Analogously, 
let $v',v''\in C_v$ be the two elements adjacent to $v$. By Lemma~\ref{l:intersectingelmsofZ} we have 
either $|\F_{u'}\cap\F_u|=1$ or $|\F_{u'}\cap\F_u|=3$. In the latter case we would have 
$u'\in\{f_3-f_1-f_2, f_2-f_1-f_3\}$, which would be incompatible with $u'\cdot v=0$, 
therefore the first case occurs. The only possibilities are $\F_{u'}\cap\F_u = \{f_1\}$ and $\F_{u'}\cap\F_u=\{f_3\}$, 
which correspond, respectively, to $u'=f_5-f_6-f_1$ and $u'=f_3-f_4-f_5$, for some $f_5, f_6\in\F$. 
If the first possibility is realized, then 
it is easy to check that either $u'' = f_3-f_4-f_5$ or $u'' = f_6-f_5-f_1$. 
We are going to analyze the various cases. 

Suppose first that $u'=f_5-f_6-f_1$ and $u'' = f_3-f_4-f_5$. by Lemma~\ref{l:pcsinj}(2) there must be some 
vector $w\not = u$ such that $w\cdot f_2 = 1$. Since $w\cdot u=0$, we must have $w\cdot f_1=1$, and this forces $w=f_6-f_1-f_2$. 
Therefore $w$ is either $v'$ or $v''$, say $v''$. Then, it is easy to check that $|\F_v\cap\F_{v'}|=3$ is 
impossible, therefore $|\F_v\cap\F_{v'}|=1$, which forces $v' = f_4-f_5-f_6$. Thus, $v'$ exhausts both $C_v$ and the 
irreducible component, and (2)(a) holds. 

Now suppose that $u'=f_5-f_6-f_1$ and $u'' = f_6-f_5-f_1$. Then, by Lemma~\ref{l:pcsinj}(2) there 
is some $w\in\Z$ such that $w\cdot f_2=1$. We must have $w\cdot u=0$, 
which implies $w\cdot f_1=1$ or $w\cdot f_3=-1$. But $w\cdot f_1=1$ is incompatible 
with $w\cdot u'=w\cdot u''=0$, therefore we must have $w\cdot f_3=-1$. Then, $w\cdot v>0$ and 
therefore $w = f_3-f_2-f_4\in\{v',v''\}$, so we may assume $w=v''$. If we now consider the only vector $w'\in\Z$ which,  
by Lemma~\ref{l:pcsinj}(2), satisfies $w'\cdot f_4=-1$ we can easily conclude that $w'=f_4-f_7-f_8=v'$ for some $f_7, f_8\in\F$, 
and $v'\cdot v''=1$. Therefore $|C_v=3|$ and $C_u$, $C_v$ are as in (2)(b). 

There remains to examine the possibility $u' = f_3-f_4-f_5$. 
In this case, it is easy to check that $|\F_{u''}\cap\F_u|=3$ is impossible, and $|\F_{u''}\cap\F_u|=1$ 
forces $u''=f_5-f_6-f_1$ for some $f_6\in\F$. As before, this implies $\{v',v''\}=\{f_4-f_5-f_6,f_6-f_1-f_2\}$. 
Therefore $|C_v|=3$ and (2)(a) holds.
\end{proof} 

\begin{prop}\label{p:twoirrcompZ}
Suppose that $u,v\in\Z$ belong to distinct connected components $C_u$, respectively  
$C_v$, and $\F_u\cap\F_v\neq\emptyset$. 
If $|C_u|, |C_v|>3$ then $|C_u|=|C_v|$ and, up to swapping $u$ and $v$ there are distinct elements 
$f_1,\ldots, f_{2|C_u|}\in\F$ such that
\[
C_u = \{ f_{2i-1} - f_{2i} - f_{2i+1}\},\quad C_v = \{f_{2i} - f_{2i+1} - f_{2i+2}\}
\quad\text{for $i=1,\ldots, |C_u|$,}
\]
where $f_{2|C_u|+1} = f_1$ and $f_{2|C_u|+2}=f_2$.
\end{prop}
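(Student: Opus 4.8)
The plan is to walk simultaneously around the two cycles $C_u$ and $C_v$ and show that the combinatorics forces them to interlock into the asserted form. Since $C_u\neq C_v$, any element of $C_u$ is orthogonal to any element of $C_v$, so $u\cdot v=0$; as $\F_u\cap\F_v\neq\emptyset$, Lemma~\ref{l:ortogonalelmsofZ}(1) lets me permute $\F$ (and possibly swap $u$ with $v$) so that $u=f_1-f_2-f_3$ and $v=f_2-f_3-f_4$ with $f_1,\dots,f_4$ distinct. By Lemma~\ref{l:zeta} every element of $\Z$ has the form $f_i-f_j-f_k$ with distinct $f_i,f_j,f_k\in\F$, and by Lemma~\ref{l:pcsinj}(2) every $f\in\F$ hit at all by $\Z$ is hit by exactly three vectors, one with coefficient $+1$ and two with coefficient $-1$; I will call $f$ \emph{saturated} once three such vectors have been named. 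Three principles will be used throughout: (i) a fourth vector of $\Z$ cannot hit a saturated element; (ii) since $|C_u|,|C_v|>3$, two adjacent vectors of $C_u$ (or of $C_v$) share exactly one element of $\F$, by Lemma~\ref{l:intersectingelmsofZ}; (iii) $x\cdot y\ge 0$ for all distinct $x,y\in\Z$.

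Write $C_u$ and $C_v$ as cyclically ordered cycles $u=u_1,u_2,\dots,u_m$ and $v=v_1,v_2,\dots,v_n$ in which adjacent vectors have inner product $1$. First I would fix the orientations: a neighbour of $u_1$ sharing $f_2$ would hit $f_2$ with coefficient $+1$ while $u_1$ and $v_1$ already hit it with coefficients $-1$ and $+1$, contradicting the sign pattern, so the two neighbours of $u_1$ share $f_1$ and $f_3$; orient $C_u$ so that $u_2$ shares $f_3$, whereupon $u_2\cdot v_1=0$ forces $u_2=f_3-f_4-f_5$ with $f_5$ new. Now $f_3$ is saturated (by $u_1,v_1,u_2$), so no neighbour of $v_1$ shares $f_3$; orient $C_v$ so that $v_2$ shares $f_4$, and $v_2\cdot u_1=v_2\cdot u_2=0$ forces $v_2=f_4-f_5-f_6$ with $f_6$ new. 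I would then prove by induction on $k$ that, as long as the walks have not closed up,
\[
u_k=f_{2k-1}-f_{2k}-f_{2k+1},\qquad v_k=f_{2k}-f_{2k+1}-f_{2k+2},
\]
with the $f_i$ distinct. In the step $k\to k+1$: $u_{k+1}$ shares exactly one element with $u_k$; it cannot be $f_{2k-1}$ or $f_{2k}$ (saturated by $u_{k-1},v_{k-1},u_k$, respectively $v_{k-1},u_k,v_k$), so it is $f_{2k+1}$; then $u_{k+1}\cdot v_k=0$ puts $f_{2k+2}$ into $u_{k+1}$; and the remaining entry $g$ of $u_{k+1}$ cannot be $f_2$ (positivity against $u_1$) and cannot be any other already named element (each is saturated, or already appears in $u_{k+1}$), so either $g$ is new ($=f_{2k+3}$) or $g=f_1$, in which case $u_{k+1}\cdot u_1=1$, i.e.\ $u_{k+1}=u_m$ and $C_u$ has closed. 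The vector $v_{k+1}$ is pinned down in the same way using $v_{k+1}\cdot u_{k+1}=v_{k+1}\cdot u_1=0$.

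Running the induction to the end gives $u_m=f_{2m-1}-f_{2m}-f_1$ and $v_n=f_{2n}-f_1-f_2$, and it remains to show $m=n$. If $n>m$, the whole of $C_u$ is then known and one more step of the $v$-walk — using $v_m\cdot u_m=0$ to force $f_1\in\F_{v_m}$ and then $v_m\cdot u_1=0$ to force the last entry of $v_m$ to be $f_2$ — gives $v_m=f_{2m}-f_1-f_2$, so $v_m\cdot v_1=1$; but $v_m$, sitting at cyclic position $m$ with $4\le m<n$, cannot be a neighbour of $v_1$, a contradiction. If $n<m$, the analogous computation shows that when $C_v$ ought to close at step $n$ the next vector is instead forced to have a new last entry, so it is disjoint from $\F_{v_1}$ and cannot be adjacent to $v_1$, again a contradiction. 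Hence $m=n$, and relabelling $f_{2m+1}:=f_1$, $f_{2m+2}:=f_2$ yields the stated descriptions of $C_u$ and $C_v$.

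I expect the real obstacle to be the bookkeeping, both in the inductive step and in the closing-up argument: at every stage one must know precisely which $f_i$ are already saturated and with which sign each named vector hits each $f_i$, in order to be sure that the ``forced'' branch is the only possibility and that the two cyclic walks wrap at the same index. Granting this organization, every individual deduction is a one-line computation with $\pm1$'s, so the argument — while long — should be routine; as a consistency check, its $m=3$ specialization must reproduce case (2)(a) of Lemma~\ref{l:ortogonalelmsofZ}.
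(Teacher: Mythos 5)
Your overall strategy is the same as the paper's: start from the configuration $u=f_1-f_2-f_3$, $v=f_2-f_3-f_4$ provided by Lemma~\ref{l:ortogonalelmsofZ}(1), walk simultaneously around the two cycles using the ``three hits per basis vector, with signs $(-1,+1,+1)$'' constraint from Lemma~\ref{l:pcsinj}(2) together with Lemmas~\ref{l:intersectingelmsofZ} and~\ref{l:ortogonalelmsofZ}(1), and close up to compare lengths. The inductive step and the closing-up argument are sound (for $k\geq 2$ the elements $f_{2k-1}$ and $f_{2k}$ really are saturated, and your $v_m\cdot v_1=1$ contradiction for $n>m$ is a legitimate variant of the paper's conclusion).

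There is, however, a genuine gap at the very first step, where you orient $C_u$. You correctly rule out a neighbour of $u_1$ sharing $f_2$ (the third vector hitting $f_2$ must hit it with the same sign as $u_1$, so it cannot have intersection $+1$ with $u_1$), but you then assert that ``the two neighbours of $u_1$ share $f_1$ and $f_3$.'' What is missing is the possibility that \emph{both} neighbours share $f_1$. A neighbour sharing $f_1$ must satisfy $u'\cdot f_1=+1$, and since $u_1$ itself is the unique vector with $u_1\cdot f_1=-1$, both of the two available $+1$ slots on $f_1$ are still free at this stage --- so your saturation bookkeeping does not exclude this configuration, and if it occurred no neighbour of $u_1$ would share $f_3$ and your walk could not start. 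Ruling it out needs a separate argument: the paper shows that if both neighbours of $u$ met $f_1$, then the third vector $w$ hitting $f_2$ would satisfy $w\cdot u=0$, hence by Lemma~\ref{l:ortogonalelmsofZ}(1) would have to contain $f_3$ with the leading sign, forcing $|w\cdot v|\geq 2$ (or $\F_w=\F_v$), a contradiction. Note this problem is confined to the base case of the $u$-walk: at the corresponding step for $v_1$ the leading element $f_2$ already carries two of its three hits, so only one neighbour of $v_1$ can meet it, and from $k\geq 2$ onward saturation suffices exactly as you say. With this one lemma supplied, your proof goes through and coincides in substance with the paper's.
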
 

\begin{proof} 
According to Lemma~\ref{l:ortogonalelmsofZ}, up to swapping 
$u$ and $v$ we can write $u=f_1-f_2-f_3$ and $v=f_2-f_3-f_4$ for some distinct $f_1,f_2,f_3,f_4\in\F$. 
Let $u',u''\in C_u$ be the two elements adjacent to $u$, i.e.~such that $u'\cdot u=u''\cdot u=1$. Analogously, 
let $v',v''\in C_v$ be the two elements adjacent to $v$.
By Lemma~\ref{l:intersectingelmsofZ} we have 
\[
|\F_{u'}\cap\F_u|=|\F_{u''}\cap\F_u|=|\F_{v'}\cap\F_v|=|\F_{v''}\cap\F_v|=1. 
\]
Moreover, we claim that either $u'\cdot f_1=0$ or $u''\cdot f_1=0$. To prove the claim, suppose by contradiction that 
$u'\cdot f_1\neq 0$ and $u''\cdot f_1\neq 0$. Then, $\F_{u'}\cap\F_u = \F_{u''}\cap\F_u=\{f_1\}$. 
By Lemma~\ref{l:pcsinj}(1) and the surjectivity of the map 
of Lemma~\ref{l:coefficients}, there exists $w\in\Z$ with 
$w\cdot f_2=-1$. This implies, in particular, that $w\not\in\{u',u''\}$, and therefore $w\cdot u=0$. 
By Lemma~\ref{l:ortogonalelmsofZ} we have $w=f_2-f_3-f_5$ 
for some $f_5\in\Z$. But then $w\cdot v<0$, which is impossible. 
Therefore, the claim is proved and without loss of generality we may assume $u'\cdot f_1=0$. Since $u'\cdot u=1$, 
$u'\cdot v=0$ and we already have $v\cdot f_2=-1$, we must have $u'=f_3-f_4-f_5$ for some $f_5\in\F$. If we apply the same argument with the pair $(v,u')$ in place of $(u,v)$ we see that we may assume without loss of generality 
that $v'=f_4-f_5-f_6$ for some $f_6\in\Z$. Now let $u^1:=u$, $u^2:=u'$, $v^1:=v$ and $v^2:=v'$. The same 
argument applied to $(v^1,u^1)$ yields an element $u^3=f_5-f_6-f_7$, and so on. We end up with two sequences 
of the form
\[
u^i = f_{2i-1} - f_{2i} - f_{2i+1}\in C_u,\quad v^i = f_{2i} - f_{2i+1} - f_{2i+2}\in C_v,\ i=1,2,\ldots
\]
such that $u^i\cdot u^{i+1}=v^i\cdot v^{i+1} = 1$ for every $i$. Suppose without loss of generality 
$|C_u|\leq |C_v|$. It is easy to check that, as long as $i\leq |C_u|-1$ the $f_i's$ are all distinct. On the other 
hand, $u^{|C_u|}\cdot u^1=1$ implies $u^{|C_u|} = f_{2|C_u|-1}-f_{2|C_u|} - f_1$ and $u^{|C_u|+1}=u^1$, 
while $v^{|C_u|}\cdot u^1=0$ implies $v^{|C_u|} = f_{2|C_u|} - f_1 - f_2$ and $v^{|C_u|+1}=v^1$. Therefore 
$|C_u|=|C_v|$ and the statement holds.
\end{proof}

\begin{prop}\label{p:oneirredcompZ}
Let $C\subset\Z$ be a connected component of $\Z$ such that, for each $u\in C$ and $v\in\Z$, 
$\F_u\cap\F_v\not=\emptyset$ implies $v\in C$. Then, $|C|$ is odd and there exist 
$f_i\in\F$, $i\in\bZ/(2m+1)\bZ$, such that  
\[
C = \{f_{2i-1}-f_{2i}-f_{2i+1}\ |\ i\in\bZ/(2m+1)\bZ\}. 
\]
\end{prop}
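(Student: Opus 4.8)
The plan is to run exactly the same kind of ``follow the intersection pattern'' argument used in the proof of Proposition~\ref{p:twoirrcompZ}, but now within a single connected component. Let $C\subset\Z$ be as in the statement, and write $|C|=\ell$. By Lemmas~\ref{l:pcsinj} and~\ref{l:zeta} every $z\in Z$ has $z\cdot z=-3$ and $|\F_z|=3$, so each $z$ can be written as $z=g-h-k$ for distinct $g,h,k\in\F$. The first task is to show that $C$ cannot contain a pair $u,v$ with $u\cdot v=1$ of the ``type (2)'' form of Lemma~\ref{l:intersectingelmsofZ}, i.e. $u=f_1-f_2-f_3$, $v=f_3-f_1-f_2$. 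Indeed, if $|C|>3$ then Lemma~\ref{l:intersectingelmsofZ} already forbids this; and the hypothesis on $C$ (that no vector outside $C$ shares an $\F$--element with a vector of $C$) together with Equation~\eqref{e:sumofcoeff} (each $f\in\F$ is hit by exactly the vectors of $C$, so must be hit with total coefficient $1$, forcing, as in Lemma~\ref{l:pcsinj}(2), one $-1$ and two $+1$'s) rules out $|C|=3$ of type (2): a three--element component of that shape would use only the three elements $f_1,f_2,f_3$, and a short count of signs on each $f_i$ gives a contradiction. So every adjacent pair in $C$ is of type~(1).

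Next I would bootstrap exactly as in Proposition~\ref{p:twoirrcompZ}. Pick any $u^1\in C$ and one of its neighbours $u^2$; by type~(1) we may write $u^1=f_1-f_2-f_3$ and $u^2=f_3-f_4-f_5$ for distinct $f_i$. Let $u^3$ be the other neighbour of $u^2$; applying Lemma~\ref{l:pcsinj}(2) to locate the (unique) vector hitting $f_4$ with coefficient $-1$, and using that $u^3\cdot u^1=0$ together with the type~(1) constraint, one gets $u^3=f_5-f_6-f_7$ for a new $f_6,f_7\in\F$. Iterating, one produces a cyclic sequence $u^i=f_{2i-1}-f_{2i}-f_{2i+1}$, $i\in\bZ/\ell\bZ$, traversing all of $C$, with the indices of the $f$'s read modulo $2\ell$ once the cycle closes. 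The closure condition $u^\ell\cdot u^1=1$ forces $f_{2\ell+1}=f_1$; but then the three elements appearing in $u^\ell$, namely $f_{2\ell-1},f_{2\ell},f_1$, must be distinct, and tracing the sign of $f_1$ shows $f_1$ is hit by $u^1$ (coefficient $+1$) and by $u^\ell$ (coefficient $+1$) — since by Lemma~\ref{l:pcsinj}(2) every $f$ is hit by exactly three vectors, two with $+1$ and one with $-1$, the element hitting $f_1$ with $-1$ is $u^2$ only if $2=\ldots$; the bookkeeping shows the overlap pattern is consistent precisely when the total number of distinct $f$'s equals $\ell$, i.e. $2\ell+1$ indices collapse to $2\ell$, which happens exactly when $\ell$ is odd. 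Relabelling, we get $C=\{f_{2i-1}-f_{2i}-f_{2i+1}\mid i\in\bZ/(2m+1)\bZ\}$ with $\ell=2m+1$.

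The parity statement is where I expect the only real subtlety. The cleanest way to see that $\ell$ must be odd is a counting argument: the map $C\to\F$, $u\mapsto e_u$ given by Lemma~\ref{l:pcsinj}(1) restricted to $C$ is a bijection onto the set $\F_C$ of basis vectors hit by $C$ (by the hypothesis on $C$ no other vector hits these, so $|\F_C|=|C|=\ell$). On the other hand, each $u\in C$ hits exactly three elements of $\F_C$, and each $f\in\F_C$ is hit by exactly three elements of $C$, so the ``adjacency bipartite graph'' between $C$ and $\F_C$ is $3$--regular on $2\ell$ vertices. The chain $u^1,u^2,\dots$ traces a Hamiltonian cycle in $C$, and each consecutive pair shares exactly one $f$; the shared $f$'s are themselves distinct and exhaust $\F_C$, so the alternating sequence $u^1,f_3,u^2,f_5,u^3,\dots$ is a single cycle of length $2\ell$ through the bipartite graph, and closing it up forces the index shift to be consistent mod $2\ell$, which (as the explicit formula $u^i=f_{2i-1}-f_{2i}-f_{2i+1}$ makes plain) is possible only when $2\ell+1\equiv 1\pmod{2\ell}$ fails to create a collision — i.e. only when $\ell$ is odd. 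I would present this as an induction on $i$ establishing that $f_1,\dots,f_{2i+1}$ are distinct for $i\le \ell-1$ and then reading off the closure relation, deriving $\ell=2m+1$ directly; the main obstacle is simply organizing the sign/coefficient bookkeeping (which vector hits which $f$ with which sign) cleanly enough that the closure-of-the-cycle step is transparent rather than a case explosion.
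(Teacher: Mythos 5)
There is a genuine gap --- in fact two. First, your preliminary claim that a three--element component cannot consist of type--(2) pairs is false: the configuration $\{f_1-f_2-f_3,\ f_3-f_1-f_2,\ f_2-f_3-f_1\}$ satisfies every hypothesis (each product of distinct elements is $+1$, each $f_i$ is hit with signs $-1,+1,+1$, so Equation~\eqref{e:sumofcoeff} holds), and it is exactly the $m=1$ case of the conclusion. Second, and more seriously, the induction you propose --- ``$f_1,\dots,f_{2i+1}$ are distinct for $i\le\ell-1$'' --- cannot be true. As you yourself compute, $|\F_C|=|C|=\ell$, so for $\ell=5$ you would need nine distinct basis elements inside a set of five. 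The reason the tracing argument works in Proposition~\ref{p:twoirrcompZ} is that there $|\F|=2|C_u|$ and the ``middle'' element $f_{2i}$ of a $C_u$--vector is immediately consumed as the leading element of a $C_v$--vector; in a single component the middle elements must instead recur roughly halfway around the cycle (in the final answer $f_{2i}$ is $f_{2i-(2m+1)}$, shared by $u^{i+m}$ and $u^{i+m+1}$), so the identifications happen throughout the traversal, not only at closure. Your parity argument via the $3$--regular bipartite incidence graph does not engage with this: the alternating cycle you describe uses only two of the three edges at each vertex, and nothing in its closure forces $\ell$ odd without pinning down where each middle element reappears --- which is the whole difficulty.

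The paper takes a different and cleaner route: it inducts on $|\Z|$ itself. Given $u=f_1-f_2-f_3$ and $v=f_3-f_4-f_5$ adjacent, it locates the third vector $w=f_2-f_3-f_4$ hitting $f_3$, shows $w\cdot u=w\cdot v=0$, and replaces $\{u,v,w\}$ by $\{u+v,\ f_2-f_4\}$; a $(-2)$--contraction then yields a connected set $\Z''$ with $|\Z''|=|\Z|-2$ satisfying the same hypotheses. Oddness follows because repeated reduction of an even $|\Z|$ would terminate at $|\Z|=4$, which is impossible ($|\F|=4$ while Lemma~\ref{l:intersectingelmsofZ}(1) demands five distinct basis elements for an adjacent pair in a component of size greater than $3$), and the explicit form of $C$ is then read off by reversing the two--step reduction. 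If you want to salvage a direct tracing proof you would need to prove, not assert, the ``shift by $m$'' recurrence of the middle elements; the reduction argument avoids that bookkeeping entirely.
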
 

\begin{proof} 
Without loss of generality we may assume $C=\Z$. 
We prove the statement by induction on $|\Z|\geq 3$, treating separately the two 
cases $|\Z|$ odd and $|\Z|$ even. For the basis of the induction in the odd case, we 
observe that if $|\Z|=3$ it is easy to check that  there exist $f_1,f_2,f_3\in\F$ such that
\[
\Z = \{f_1-f_2-f_3, f_3-f_1-f_2, f_2-f_3-f_1\}, 
\]
therefore the statement holds. In the even case the basis is the case $|\Z|=4$. Then, $|\F|=4$ as well, which 
is impossible by Lemma~\ref{l:intersectingelmsofZ} (Case~$(1)$ applies because $|\Z|>3$). Therefore,  
there is no set $\Z$ satisfying the assumptions of the proposition with $|\Z|=4$. In the proof we will show that 
the same conclusion holds if $|\Z|$ is even. 

Now suppose that $|\Z|\geq 5$, and let $u,v\in \Z$ with $u\cdot v=1$. 
By Lemma~\ref{l:intersectingelmsofZ}, up to renaming $u$ and $v$ there exist five distinct elements 
$f_1,\ldots, f_5\in\F$ such that $u=f_1-f_2-f_3$ and $v=f_3-f_4-f_5$. Let $w$ be the only element of $\Z$ such 
that $w\neq u$ and $w\cdot f_3=1$. By Lemma~\ref{l:intersectingelmsofZ} we must have $w\cdot u=0$. 
We claim that $w\cdot v=0$ as well. Suppose by contradiction that $w\cdot v=1$. Then, let $w'$ and $w''$ be the unique 
elements of $\Z$ such that $w'\cdot f_4=-1$ and $w''\cdot f_5=-1$. Since necessarily $w'\cdot v=w''\cdot v=0$ 
and $f_3$ already hits $u, v$ and $w$, we must have $w'\cdot f_5=w''\cdot f_4=1$. But this would imply 
$w'\cdot w''>0$ and $|\F_{w'}\cap\F_{w''}|>1$, contradicting Lemma~\ref{l:intersectingelmsofZ}. Therefore 
the claim is proved and $w\cdot v=0$. Up to swapping $f_4$ and $f_5$ 
we have $w=f_2-f_3-f_4$. Now let $w' = f_2 - f_4$, and consider the new set 
\[
\Z' = \Z\setminus\{u,v,w\} \cup \{u+v,w'\} \subset\Z.
\]
By construction, $\Z'$ has clearly cardinality $|\Z|-1$ and is contained in the span of $\F'=\F\setminus\{f_3\}$. Thus, 
it is a positive, circular subset in $\bZ^{|\Z'|}$ admitting a canonical adapted basis, and 
the map of Lemma~\ref{l:coefficients} is injective. 
Moreover, the Wu element $W'$ of $\Z'$ clearly satisfies $W'\cdot W'=-|\Z'|$. 
Hence, the assumptions of Lemma~\ref{l:pcsinj} are satisfied for $\Z'$. 
We can therefore apply a (-2)--contraction using the 
vector $w'$. Since $u+v=f_1-f_2-f_4-f_5$, the (-2)--contraction gives a new positive circular subset $\Z''$ of 
cardinality $|\Z''|=|\Z'|-1=|\Z|-2\geq 3$ contained in the span of $\F''=\F\setminus\{f_2,f_3\}$. By construction, all 
the elements of $\Z''$ have square $-3$, and $\Z''$ has a single connected component. Moreover, it is 
easy to check that the assumptions of Lemma~\ref{l:pcsinj} are still satisfied by $\Z''$. 
If $|\Z|$ and $|\Z''|$ were even, we could repeat the same construction -- several times if necessary --  
to obtain a set $\widetilde\Z$ with $|\widetilde\Z|=4$, satisfying the assumptions of the proposition, which 
we have already shown to be impossible. Therefore $|\Z|$ and $|\Z''|$ must be odd. 
By the inductive assumption there exist $f_i\in\F''$, $i\in\bZ/(2m-1)\bZ$, such that
\[
\Z'' = \{f_{2i-1}-f_{2i}-f_{2i+1}\ |\ i\in\bZ/(2m-1)\bZ\}. 
\]
Observe that $\Z$ is obtained from $\Z''$ by replacing, for some $j\in\bZ/(2m-1)\bZ$, the subset
\[
\{f_{2j-1}-f_{2j}-f_{2j+1}, f_{2j+1}-f_{2j+2}-f_{2j+3}\}
\]
with
\[
\{f_{2j-1}-f_{2j}-f_{2m+3}, f_{2m+2}-f_{2m+3}-f_{2j+1}, f_{2j+1}-f_{2j+2}-f_{2j+3}\}
\]
and the subset
\[
\{f_{2j}-f_{2j+1}-f_{2j+2}\}\quad\text{with}\quad \{f_{2j}-f_{2m+2}-f_{2m+3}, f_{2m+3}-f_{2j+1}-f_{2j+2}\}.
\]
It is now a simple matter to deduce the statement for $\Z$. 
\end{proof} 

\subsection*{Conclusions}
We now have enough information about positive circular subsets of $\bZ^N$, so we can draw 
the conclusions we are interested in. The following theorem will be used in the next section, 
together with Theorem~\ref{t:semipos}, to prove Theorem~\ref{t:main}.

\begin{thm}\label{t:pcs}
Let $\Ga$ be a weighted graph as in Figure~\ref{f:graph} with $d=0$, $t\geq 2$, $x_i, y_i\geq 1$ 
and $\sum_i y_i = \sum_i x_i\geq 3$. 
Let $N=k\sum_i y_i$ for some $k\geq 1$, and suppose that there is an isometric embedding 
$\La^k_\Ga\subset\bZ^N$ of finite and odd index. Then, 
the string of negative integers 
\begin{equation}\label{e:string}
S:=(-2-x_1,\overbrace{-2,\ldots,-2}^{y_1-1},\ldots,-2-x_t,\overbrace{-2,\ldots,-2}^{y_t-1})
\end{equation}
satisfies one of the following:
\begin{enumerate}
\item
$S$ is obtained, up to circular symmetry, from $(-2,-2,-5)$ by iterated (-2)--expansions 
(in the sense of Definition~\ref{d:-2-exp});
\item
there exist a regular polygon $P$ with $t$ vertices and a symmetry $\varphi\co P\to P$ such that
$(X,Y) = (Y,X')^\varphi$, where $(X,Y)$ is the labelling of $P$ encoded by $(x_1,y_1,\ldots, x_t,y_t)$ 
and $(Y,X')$ is the labelling encoded by $(y_1,x_2, y_2,x_3,\ldots, y_t,x_1)$.
\end{enumerate}
\end{thm}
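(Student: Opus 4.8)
The plan is to feed the hypothesized isometric embedding $\La^k_\Ga\subset\bZ^N$ into the machinery of Section~\ref{ss:poscs}. Since $d=0$, the image of the natural basis of $\La^k_\Ga$ is a \emph{positive} circular subset $\V\subset\bZ^N$: the three conditions of Definition~\ref{d:circular} are immediate from the shape of $\Ga$ (each vertex has exactly two neighbours and all off-diagonal entries are $0$ or $1$ because $d=0$ kills the sign $(-1)^d$), and positivity holds for the same reason. I would first record the three numerical facts needed to invoke the propositions: the embedding has finite and odd index by hypothesis; the Wu element $W=\sum_{v\in\V}v$ satisfies $W\cdot W=-N$, which is the computation $k\bigl(\sum_i(-2-x_i)-2\sum_i(y_i-1)+2\sum_i y_i\bigr)=-k\sum_i y_i=-N$ using $\sum_i x_i=\sum_i y_i$; and each connected component $D\subset\V$ satisfies $\sum_{v\in D}(v\cdot v+2)=-\sum_i x_i=-|D|$ (here $|D|=\sum_i y_i$). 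In particular $v\cdot v\le -2$ for all $v$, so by Lemma~\ref{l:wuelement} there is an adapted canonical basis $\E$ and Lemma~\ref{l:coefficients} applies.

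\textbf{Next I would split on whether the map $\W\to\E$ of Lemma~\ref{l:coefficients} is injective}, which (as noted in the preamble to Section~\ref{ss:poscs}) is a well-defined dichotomy for positive circular subsets. If it is \emph{not} injective, then all the hypotheses of Proposition~\ref{p:pcsnotinj} are in place, and that proposition hands us a connected component $C\subset\V$ whose string of self-intersections is obtained, up to circular symmetry, from $(-2,-2,-5)$ by a sequence of $(-2)$-expansions. Translating from the self-intersection string of $C$ back to the string $S$ of Equation~\eqref{e:string} — they are literally the same string once one matches $-2-x_i$ with the non-$(-2)$ entries — gives conclusion~(1). (One small point to check: a priori the proposition only controls one component, but the embedding $\La^k_\Ga\subset\bZ^N$ is a disjoint sum of $k$ isometric copies of $\La_\Ga$, so every component has the same self-intersection string, namely $S$; hence controlling one controls $S$.)

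\textbf{The injective case is where the real work is, and is the main obstacle.} Here I would run the reduction described after Remark~\ref{r:-2-contr}: repeatedly apply $(-2)$-contractions, which by Remark~\ref{r:-2-contr} preserve all the hypotheses of Lemma~\ref{l:pcsinj}, until reaching a subset $\Z$ in which every $(-2)$-vector lies in a three-element component. I then split once more on the value of $t$ from Equation~\eqref{e:prelim}. When $t=1$ the analysis is degenerate and can be dispatched by hand — a single run of the contraction argument forces $\Z$, hence $\La_\Ga$, into a contradiction with negative-definiteness or directly into the claimed form; I would insert a short ad hoc paragraph here. When $t=2$, Proposition~\ref{p:Zcaset=2} describes each irreducible component of $\Z$ explicitly as a union of two three-element components with a prescribed $\F$-pattern; reversing the $(-2)$-contractions from this explicit $\Z$ reconstructs $\V$, and reading off the labelling shows it has the polygonal symmetry of conclusion~(2) (the involution swapping the two ``halves'' is the symmetry $\varphi$, and the relabelling $(X,Y)\mapsto(Y,X')$ is exactly the bookkeeping of which $f_i$'s are ``plus'' versus ``minus'' entries). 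When $t\ge 3$, Lemma~\ref{l:zeta} forces every vector of $\Z$ to have square $-3$, and then Propositions~\ref{p:twoirrcompZ} and~\ref{p:oneirredcompZ} give the two possible shapes of an irreducible component of $\Z$: either a single odd cycle $\{f_{2i-1}-f_{2i}-f_{2i+1}\}_{i\in\bZ/(2m+1)}$, or a ``doubled'' pair of equal-length cycles $\{f_{2i-1}-f_{2i}-f_{2i+1}\}$, $\{f_{2i}-f_{2i+1}-f_{2i+2}\}$. In both configurations I would undo the contractions, keep track of how the $x_i$'s and $y_i$'s are distributed among the cyclically arranged basis vectors, and check that the resulting labelling $(X,Y)$ of the $t$-gon $P$ satisfies $(X,Y)=(Y,X')^\varphi$ for the appropriate dihedral symmetry $\varphi$ (a reflection in the single-cycle case, a rotation by one ``step'' combined with the halving map in the doubled case). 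The delicate part throughout is the reversal of the $(-2)$-contractions: a $(-2)$-contraction replaces three vectors by two and deletes a basis element, so going backwards one must reinsert, at each step, a $(-2)$-vector $u=e_u-f$ together with the correct splitting of the neighbouring vectors $v,w,z$ as dictated by Lemma~\ref{l:pcsinj}(3); I would argue that the symmetry established for $\Z$ is preserved under each such reinsertion, so that it survives all the way back to $\V$, yielding conclusion~(2). Finally I would note that cases~(1) and~(2) are exhaustive because the injective/non-injective dichotomy is, completing the proof.
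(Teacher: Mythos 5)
Your proposal follows the paper's proof essentially step for step: the same reduction to a positive circular subset with $W\cdot W=-N$ and $\sum_{v\in D}(v\cdot v+2)=-|D|$, the same dichotomy on injectivity of the map of Lemma~\ref{l:coefficients}, Proposition~\ref{p:pcsnotinj} in the non-injective case, and the same $(-2)$-contraction/expansion bookkeeping via Propositions~\ref{p:Zcaset=2}, \ref{p:twoirrcompZ} and \ref{p:oneirredcompZ} in the injective case. The only slips are cosmetic: the theorem assumes $t\geq 2$, so no $t=1$ case is needed, and when $t=3$ with $\Z$ disconnected one must also allow the configuration of Lemma~\ref{l:ortogonalelmsofZ}(2)(b) (not covered by Proposition~\ref{p:twoirrcompZ}, which needs components of size $>3$), which the paper checks separately and which still yields conclusion~(2) via a $2\pi/3$--rotation.
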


\begin{proof} 
We proceed as in Theorem~\ref{t:semipos}. The lattice $\La^k_\Ga$ has a natural basis, whose elements are in 1--1 
correspondence with the vertices of the disjoint union of $k$ copies of $\Ga$, and 
intersect as prescribed by edges and weights. Since $d=0$, the image of such a basis is a positive circular 
subset $\V\subset\bZ^N$. The Wu element $W=\sum_{v\in\V} v$ satisfies 
\[
W\cdot W = k\left(\sum_i (-2-x_i) -2\sum_i (y_i-1) + 2\sum_i y_i\right) = 
-k\sum_i y_i = -N
\]
and since $\La_\V=\La^k_\Ga$, the embedding $\La_\V\subset\bZ^N$ has finite and odd index. 
By Lemma~\ref{l:wuelement}, there is a canonical basis $\E\subset\bZ^N$ adapted to $\V$. 
Moreover, each connected component $D\subset\V$ satisfies 
\[
\sum_{v\in D} (v\cdot v+2) = -\sum_i x_i = -\sum_i y_i = -|D|.
\]
If the map of Lemma~\ref{l:coefficients} is not injective then the assumptions of Proposition~\ref{p:pcsnotinj} 
are satisfied. Applying the proposition, Case~$(1)$ of the statement follows immediately. 

If the map of Lemma~\ref{l:coefficients} is injective, the considerations following Remark~\ref{r:-2-contr} 
show that applying a finite number of (-2)--contractions (in the sense of Definition~\ref{d:-2-contr}) 
to the subset $\V$ one obtains a circular subset $\Z\subset\bZ^{|\Z|}$ satisfying all the assumptions of 
Lemma~\ref{l:pcsinj} and having the property that each $(-2)$--vector of $\Z$ belongs to 
a connected component with exactly three elements. 
In particular, there is a canonical basis $\F\subset\bZ^{|\Z|}$ adapted to $\Z$. 
Clearly, every irreducible component of $\Z$ is a union of 
connected components. Since the string $S$ given by~\eqref{e:string} is associated 
to any connected component of $\V$, we can assume 
without loss of generality that $\Z$ is irreducible. 

When $t=2$ the subset $\Z$ satisfies the conclusions of Proposition~\ref{p:Zcaset=2}, therefore it is 
the union of two connected components $C_1 = \{u_1,v_1,w_1\}$ and $C_2 = \{u_2,v_2,w_2\}$, 
where $u_1 = f_1-f_2$, $v_1 = f_2-f_3-f_4$, $w_1 = f_4-f_5-f_6-f_1$ and 
$u_2 = f_6-f_5$, $v_2 = f_3-f_4-f_6$ and  $w_2 = f_5-f_1-f_2-f_3$ for some $f_1,\ldots, f_6\in\F$. 
Now we want to recover the self--intersections of the elements of $\V$ using the fact that $\Z$ is 
obtained from $\V$ by $(-2)$--contractions. Of course, $\V$ is obtained from $\Z$ 
by $(-2)$--expansions. Suppose that, for $i=1,2$, $\V$ is obtained by applying $a_i$ 
$(-2)$--expansions between $u_i$ and $v_i$, $b_i$ $(-2)$--expansions between $v_i$  and $w_i$, 
and $c_i$ $(-2)$--expansions between $w_i$ and $u_i$. Then, it is easy to check that $v_i\in\Z$ 
becomes a vector of $\V$ of self--intersection $-3-b_{3-i}$ and $w_i$ a vector of 
self--intersection $-4-a_{3-i}-c_{3-i}$, for $i=1,2$. Therefore, up to a symmetry, the weights of the weighted 
graphs associated to the two connected components of $\V$ are 
\[
(-4-a_2-c_2,\overbrace{-2,\ldots, -2}^{a_1+c_1+1}, -3-b_2,\overbrace{-2,\ldots,-2}^{b_1})
\]
and
\[
(-4-a_1-c_1,\overbrace{-2,\ldots, -2}^{a_2+c_2+1}, -3-b_1,\overbrace{-2,\ldots,-2}^{b_2}).
\]
Since the two components of $\V$ had the same self--intersections, in terms of  
the parameters $x_i, y_i$ this easily translates into the condition that either (i) $x_1=y_1$ and $x_2=y_2$ 
or (ii) $x_1=y_2$ and $x_2=y_1$. In both cases, there exist a bigon $P$ and a symmetry $\varphi\co P\to P$ 
such that $(X,Y) = (Y,X')^\varphi$, where $(X,Y)$ is the labelling of $P$ encoded by $(x_1,y_1, x_2, y_2)$ 
and $(Y,X')$ is the labelling encoded by $(y_1,x_2, y_2,x_1)$. In fact, in the first case we can take the 
reflection of $P$ which fixes the vertices and switches the edges, in the second case the reflection 
which switches the vertices and fixes the edges. Therefore Case~(2) holds. 

When $t\geq 3$ we first assume that $\Z$ has more than one connected component. 
Then, by Lemma~\ref{l:ortogonalelmsofZ} and Proposition~\ref{p:twoirrcompZ} 
all the connected components of $\Z$ have the same cardinality. 
As in the case $t=2$, we want to recover the self--intersections of the elements of $\V$ from the structure of $\Z$.
Let $C\subset\Z$ be a connected component. 
If $|C|>3$ then Proposition~\ref{p:twoirrcompZ} applies, and we shall now refer to the notation and 
terminology of that proposition.  
Starting from $\Z$, if we do, say, $x_i$ (-2)--expansions between $f_{2i-1}-f_{2i}-f_{2i+1}$ 
and $f_{2i+1}-f_{2i+2}-f_{2i+3}$, the weight of the vertex corresponding to 
$f_{2i}-f_{2i+1}-f_{2i+2}$ becomes $-3-x_i$. Similarly, doing $y_i$ 
(-2)--expansions between $f_{2i}-f_{2i+1}-f_{2i+2}$ and $f_{2i+2}-f_{2i+3}-f_{2i+4}$ makes 
the vertex corresponding to $f_{2i-1}-f_{2i}-f_{2i+1}$ acquire 
weight $-3-y_i$. This shows that the two connected components $C_u$ and $C_v$ of 
Proposition~\ref{p:twoirrcompZ} come from two connected components 
$\widetilde C_u, \widetilde C_v\subset\V$ having self--intersections given, 
up to a symmetry of the corresponding weighted graphs, by 
\[
(\overbrace{2,\ldots,2}^{x_1},-3-y_1,\overbrace{2,\ldots,2}^{x_2},
\ldots,\overbrace{2,\ldots,2}^{x_t},-3-y_t)\quad\text{and}\quad 
(-3-x_1,\overbrace{2,\ldots,2}^{y_1},-3-x_2,
\ldots,-3-x_t,\overbrace{2,\ldots,2}^{y_t}).
\]
Since the weighted graphs associated to $\widetilde C_u$ and $\widetilde C_v$ are isomorphic, 
it follows easily that there is a regular $t$--gon $P$ and a symmetry $\varphi\co P\to P$ such that
$(X,Y) = (Y,X')^\varphi$, where $(X,Y)$ is the labelling of $P$ encoded by $(x_1,y_1,\ldots, x_t,y_t)$ 
and $(Y,X')$ is the labelling encoded by $(y_1,x_2, y_2,x_3,\ldots, y_t,x_1)$. 
Therefore Case~(2) holds. If $|C|=3$ then Lemma~\ref{l:ortogonalelmsofZ}(2) applies. Conclusion (a) in the statement 
of the lemma is perfectly analogous to the statement of Proposition~\ref{p:twoirrcompZ}. 
Therefore, if Case~(a) in the lemma holds, Case~(2) of the statement holds.  
If the conclusion of Lemma~\ref{l:ortogonalelmsofZ}(2)(b) holds, an analysis similar to the case 
$|C|>3$ shows that the connected component $C_u$ corresponds
to a connected component $\widetilde C_u\subset\V$ with string of self--intersections given, up to a symmetry 
of the corresponding weighted graph, by 
\[
(-3-x_1,\overbrace{2,\ldots,2}^{x_3},-3-x_2,\overbrace{2,\ldots,2}^{x_1}, -3-x_3,\overbrace{2,\ldots,2}^{x_2}).
\]
In other words, up to symmetry, the string is of the type given by Equation~\eqref{e:string} 
with $t=3$ and $(y_1,y_2,y_3)=(x_3,x_1,x_2)$. In other words, there is an equilateral triangle $P$ and 
a $2\pi/3$--rotation $\varphi\co P\to P$ such that $(x_1,y_1,x_2,y_2,x_3,y_3) = (y_1,x_2,y_2,x_3,y_3,x_1)^\varphi$.  
Therefore Case~(2) holds again.

Now we assume that $\Z$ is connected. Then, Proposition~\ref{p:oneirredcompZ} applies. This case is quite 
similar to the case of Proposition~\ref{p:twoirrcompZ}. For each index $i$, we can only do $x_i$ (-2)--expansions 
between $f_{2i-1}-f_{2i}-f_{2i+1}$ and $f_{2i+1}-f_{2i+2}-f_{2i+3}$. Then, the vertex corresponding to 
$f_{2i}-f_{2i+1}-f_{2i+2}$ acquires weight $-3-x_i$. This implies that the weights of each 
component of $\V$ are given by Equation~\eqref{e:string}, with $t=2m+1$ and 
$y_i = x_{i+m+1}$ for each $i\in\bZ/(2m+1)\bZ$. Arguing as when $\Z$ is disconnected, 
we see again that Case~(2) holds. This concludes the proof.
\end{proof} 

\section{The proof of Theorem~\ref{t:main}}\label{s:proof}

In this section we use the results of Sections~\ref{s:prelim} and~\ref{s:latan} to prove Theorem~\ref{t:main}. 
By Proposition~\ref{p:prelim}, $K$ is the closure of a 3--braid $\be\in B_3$ of the form 
\begin{equation*}
\be = (\si_1\si_2)^{3d} \si_1^{x_1}\si_2^{-y_1}\cdots\si_1^{x_t}\si_2^{-y_t},\quad
t, x_i, y_i\geq 1, 
\end{equation*}
for some $d\in\{-1,0,1\}$ with $\sum_{i=1}^t (x_i-y_i) = -4d$. 
We  consider separately the two cases $d=\pm 1$ and $d=0$. 

{\em First case: $d=\pm 1$}. If $d=-1$ then $K^m$ is the closure of an analogous 3--braid with $d=1$. 
Therefore, we will assume without loss that $d=1$. 
By Proposition~\ref{p:embed}, for some $k\geq 1$ the orthogonal sum $\La^k_\Ga$  
of $k$ copies of $\La_\Ga$ embeds isometrically, with finite and odd index, 
in the standard negative definite lattice $\bZ^N$, where $N=k\sum_i y_i$.
If $\sum_i x_i = 1$ then $t=1$, $x_1=1$ and $y_1=x_1+4=5$. Therefore $\be = (\si_1\si_2)^3\si_1\si_2^{-5}$, 
which belongs to Family~$(1)$ of Theorem~\ref{t:main}. 
Indeed, $(x_1+2, \overbrace{2,\ldots,2}^{y_1-1}) = (3,2,2,2,2)$ and it is 
easy to check that $(3,1,2,2,1)$ is an iterated blowup of $(0,0)$ and $\be$ is quasi--positive because it is 
conjugate to $\si_2^3\si_1\si_2^{-3}\si_1$, or by~\cite[Theorem~2.3]{Li14}. 
We have $\lk(\be)=2$, and we show below that a knot with vanishing signature which is the closure 
of a quasi--positive 3--braid having exponent sum equal to $2$ is ribbon. Thus, we may assume 
$\sum_i y_i = \sum_i x_i + 4 \geq 6$. Theorem~\ref{t:semipos} implies that $K$ belongs to Family~$(1)$.  
To conclude the proof in this case we need to argue that $\be$ is quasi-positive and $K$ is ribbon. 
As observed in Section~\ref{s:intro}, the fact that $\be$ is quasi-positive follows from~\cite[Theorem~2.3]{Li14}.
Next, we observe that since $K$ has finite concordance order its signature vanishes, therefore by 
Equation~\eqref{e:signature} we have $e(\be) = 2d = 2$, where $e\co B_3\to\bZ$ is the exponent sum 
homomorphism. Finally, the constructions described in~\cite[\S~2]{Ru83} show that the closure of a quasi-positive 
3--braid with exponent sum equal to $2$ bounds an immersed surface $S\subset S^3$ with only ribbon singularities, 
constructed as the disjoint union of three embedded disks and two embedded 1--handles which intersect the disks 
in ribbon singularities. Since $K=\hat\be = \del S$ is a knot, the fact that $\chi(S) = 3 - 2 = 1$ implies that in our case we can find such an $S$ which an immersed ribbon disk. It is well--known that suitably pushing the 
interior of $S$ into the 4--ball we obtain a ribbon disk for $K$.  

{\em Second case: $d=0$}.  
Let $n:=\sum_i x_i=\sum_i y_i$. If $n=1$ we have $t=x_1=y_1=1$, 
therefore $\be=\si_1\si_2^{-1}$, $K=\hat\be$ belongs to Family~(3) and it is the unknot, which is 
amphichiral. If $n=2$ there are two possible cases: either $t=2$ and $x_1=y_1=x_2=y_2=1$, 
or $t=1$ and $x_1=y_1=2$. In the first case $\be=(\si_1\si_2^{-1})^2$, $K$ belongs to Family~(3) 
and it is the figure--eight knot, which is well--known to be amphichiral. In the second case 
$\be =\si_1^2\si_2^{-2}$, and $K=\hat\be$ is a 3--component link. Therefore, 
from now on we assume $n=\sum_i x_i=\sum_i y_i\geq 3$. 

By Proposition~\ref{p:embed}, for some $k\geq 1$ the orthogonal sum $\La^k_\Ga$  
of $k$ copies of $\La_\Ga$ embeds isometrically, with finite and odd index, 
in the standard negative definite lattice $\bZ^N$, where $N=k\sum_i y_i$.
The embedding $\La^k_\Ga\subset\bZ^N$ gives rise to a positive circular subset 
$\V\subset\bZ^N$ such that $v\cdot v\leq -2$ for each $v\in\V$. 
By Theorem~\ref{t:pcs}, the string $S$ of negative integers of Equation~\eqref{e:string} falls under 
Case~$(1)$ or Case~$(2)$ of the statement of that theorem. 
We will now verify that in Case~$(1)$ the knot $K$ belongs to Family~$(2)$ 
of Theorem~\ref{t:main}. We shall argue by induction on the length of the string $S$.
Observe first that $L_a=\widehat\be_a$, where 
\[
\be_a := \si_2\si_1\si_2^{-1} a \si_2 \si_1^{-1}\si_2^{-1} a^{-1} \in B_3. 
\]
The 3--braid $\be_a$ is conjugate to $\be'_a = \si_2^{-2} a \si_2^2 \De^{-1} a^{-1} \De$, 
where $\De = \si_1\si_2\si_1\in B_3$ is the Garside element. To prove the basis of the induction,  
note that the knot corresponding to $(-2,-2,-5)$ is 
the closure of $\si_2^{-3}\si_1^3\in B_3$, and we may also write this braid as  
\[
\be'_{\si_2^{-1}\si_1^2} = \si_2^{-2} (\si_2^{-1}\si_1^2) \si_2^2 \De^{-1} (\si_2^{-1}\si_1^2)^{-1}\De.
\]
This shows that $K$ belongs to Family~$(2)$. To prove the inductive step, 
we first consider the two (-2)--expansions of $(-2,-2,-5)$, which are $(-2,-2,-2,-6)$ and 
$(-2,-3,-5,-2)$. As one can easily check, the knot corresponding to the first expansion is the closure 
of the braid obtained from $\be'_{\si_2^{-1}\si_1^2}$ by inserting $\si_1\si_2^{-1}$ immediately 
before the factor $\si_2^{-2}$, while the knot corresponding to the second expansion is the closure 
of the braid obtained inserting $\si_2^{-1}\si_1$ immediately after the factor $\si_2^{-2}$. 
Observe that, in general,  since $\De\si_1 = \si_2\De$, 
\[
\si_1\si_2^{-1} \be'_a  = 
 \si_1\si_2^{-2} (\si_2^{-1} a) \si_2^2 \De^{-1} a^{-1} \De \sim 
\si_2^{-2} (\si_2^{-1} a) \si_2^2 \De^{-1} (\si_2^{-1} a)^{-1} \De = \be'_{\si_2^{-1} a},  
\]
where $\sim$ denotes conjugation. Similarly, 
\[
\si_2^{-2} (\si_2^{-1}\si_1) a \si_2^2 \De^{-1} a^{-1} \De \sim \si_2^{-2} (\si_1 a) \si_2^2 \De^{-1} (\si_1 a)^{-1}\De 
= \be'_{\si_1 a}.
\]
This shows that the knots corresponding to the two (-2)--expansions of $(-2,-2,-5)$ are of the form 
$\widehat\be_{\si_2^{-2}\si_1^2}$ and $\widehat\be_{\si_1\si_2^{-1}\si_1^2}$. In general, 
suppose that the string $(-m_1,\ldots, -m_k)$ was obtained by a sequence of 
(-2)--expansions from $(-2,-2,-5)$, and that the corresponding knot was the closure of a braid of 
the form $\be'_a$. Arguing as before we see that the knots corresponding to the 
two (-2)--expansions of $(-m_1,\ldots, -m_k)$ are closures of braids obtained from
$\be'_a$ by inserting either $\si_1\si_2^{-1}$ immediately before the factor 
$\si_2^{-2}$ or $\si_2^{-1}\si_1$ immediately after the factor $\si_2^{-2}$. Exactly as 
before, the results are $\be'_{\si_2^{-1} a}$ and $\be'_{\si_1 a}$. This proves the inductive 
step and shows that if Case~$(1)$ of Theorem~\ref{t:pcs} holds, then $K$ belongs to 
Family~$(2)$ of Theorem~\ref{t:main}. The fact that the link $L_a$  
is a symmetric union, hence ribbon, is evident by looking at Figure~\ref{f:symmunion}. 

Before considering the next case, observe that if $t=1$ then $x=y\geq 3$, and $K$ is the closure of $\si_1^x\si_2^{-x}$. 
In this case, the string $S$ given by Equation~\eqref{e:string} is of the form given in Case~$(1)$ of the statement 
of Theorem~\ref{t:pcs}. Therefore, by the above argument $K$ belongs to Family~$(2)$ of Theorem~\ref{t:main}.
From now on we will assume that $t\geq 3$. 

Now suppose that the string $S$ given by Equation~\eqref{e:string} falls under Case~$(2)$ 
of Theorem~\ref{t:pcs}. This immediately implies that $K$ belongs to Family~(3) 
of Theorem~\ref{t:main}, and we are only left to show that $K$ is amphichiral.
We start observing that the condition $(X,Y)=(Y,X')^\varphi$ translates, using the notations 
of Section~\ref{s:intro}, into 
\begin{equation}\label{e:phicondition}
x_i = y_{\varphi_E(i)},\quad y_i = x_{\varphi_V(i+1)},\quad i=1,\ldots, t, 
\end{equation}
where from now on `$i+1$' will mean `$1$' when $i=t$. Moreover, it is easily checked that if $\varphi$ is a rotation then 
\begin{equation}\label{e:phi=rot}
\varphi_V(i+1) = \varphi_V(i)+1\quad \text{and}\quad \varphi_E(i) = \varphi_V(i),
\end{equation}
while if $\varphi$ is a reflection then 
\begin{equation}\label{e:phi=refl}
\varphi_V(i+1),= \varphi_V(i)-1\quad \text{and}\quad \varphi_E(i) = \varphi_V(i)-1.
\end{equation}
By Equation~\eqref{e:phicondition}, the knot $K$ is the closure of 
\[
\prod_{i=1}^t \si_1^{x_i} \si_2^{-y_i} = \prod_{i=1}^t \si_1^{x_i} \si_2^{-x_{\varphi_V(i+1)}},
\]
therefore $K^m$ is the closure of 
\[
\prod_{i=1}^t \si_1^{-x_i} \si_2^{x_{\varphi_V(i+1)}}, 
\]
therefore (applying an isotopy) also the closure of 
\[
\prod_{i=1}^t \si_2^{-x_i} \si_1^{x_{\varphi_V(i+1)}} \sim 
\prod_{i=1}^t \si_1^{x_{\varphi_V(i+1)}} \si_2^{-x_{i+1}} \sim 
\prod_{i=1}^t \si_1^{x_{\varphi_V(i)}} \si_2^{-x_i}  = 
\prod_{i=1}^t \si_1^{x_{\varphi_V(i)}} \si_2^{-y_{\varphi_E(i)}}. 
\]
where `$\sim$' denotes conjugation and to obtain the equality we applied Equation~\eqref{e:phicondition}.
If $\varphi$ is a rotation then $(\varphi_V(1),\ldots, \varphi_V(t)) = (1,2,\ldots, t)$, hence  
by Equation~\eqref{e:phi=rot} we have 
\[
\prod_{i=1}^t \si_1^{x_{\varphi_V(i)}} \si_2^{-y_{\varphi_E(i)}} = 
\prod_{i=1}^t \si_1^{x_{\varphi_V(i)}} \si_2^{-y_{\varphi_V(i)}} \sim 
\prod_{i=1}^t \si_1^{x_i} \si_2^{-y_i},
\] 
therefore $K^m$ is isotopic to $K$. If $\varphi$ is a reflection then 
$(\varphi_V(1),\ldots, \varphi_V(t)) = (t,t-1,\ldots,1)$, hence by Equation~\eqref{e:phi=refl} we have 
\[
\prod_{i=1}^t \si_1^{x_{\varphi_V(i)}} \si_2^{-y_{\varphi_E(i)}} = 
\prod_{i=1}^t \si_1^{x_{\varphi_V(i)}} \si_2^{-y_{\varphi_V(i)-1}} \sim 
\prod_{i=1}^{t-1} \si_1^{x_{t+1-i}} \si_2^{-y_{t-i}}\cdot \si_1^{x_1}\si_2^{-y_t} \sim
\prod_{i=0}^{t-1} \si_2^{-y_{t-i}} \si_1^{x_{t-i}},
\] 
therefore $K^m$ is isotopic to $-K$.  This concludes the proof of Theorem~\ref{t:main}.

\bibliographystyle{amsplain}
\bibliography{3braidtorsion}
\end{document}